\documentclass[11pt]{amsart}

\usepackage{graphicx}
\usepackage{subcaption}
\usepackage{float}
\usepackage[section]{placeins}
\usepackage{pdflscape}
\usepackage{tikz}
\usetikzlibrary{matrix}
\usepackage{multicol}
\usepackage{amssymb,amsthm,amsaddr}
\usepackage{bbm}
\usepackage{graphbox}
\usepackage{upgreek}
\usepackage{tcolorbox}
\usepackage{tabto}
\usepackage{braket}
\usepackage[export]{adjustbox}
\usepackage{nicematrix}
\usepackage{appendix}
\usepackage{circledsteps}
\usepackage{mathalpha} 
\usepackage{enumitem}
\DeclareMathAlphabet{\mathpzc}{OT1}{pzc}{m}{it}

\newcommand{\K}{\mathbb{K}}
\usepackage{mathtools} 
\usepackage{mathrsfs} 
\usepackage{stmaryrd} 
\usepackage{dsfont} 

\usepackage[margin=1in]{geometry}
\usepackage[pagebackref, colorlinks = true, linkcolor = blue, urlcolor  = blue, citecolor = red]{hyperref}
\usepackage{xcolor}
\definecolor{darkgreen}{rgb}{0.09, 0.45, 0.27}
\definecolor{darkred}{rgb}{0.55, 0.0, 0.0}
\usepackage{mathabx,epsfig}
\def\acts{\mathrel{\reflectbox{$\righttoleftarrow$}}}

\newcommand{\ip}[1]{\left\langle #1 \right\rangle}
\newcommand{\RomanNum}[1]{\textup{\uppercase\expandafter{\romannumeral#1\relax}}} 

\renewcommand{\epsilon}{\varepsilon}
\renewcommand{\phi}{\varphi}
\newcommand{\C}{\mathbb{C}}

\newcommand{\bSR}{\mathbb{S}_\R}
\newcommand{\bSC}{\mathbb{S}_\C}

\newcommand{\lnorm}{\lvert \!\vert}
\newcommand{\rnorm}{\vert \! \rvert}

\newcommand{\E}{\mathbb{E}}

\renewcommand{\P}{\mathbb{P}}

\newtheorem{theorem}{Theorem}[section]
\newtheorem{definition}[theorem]{Definition}
\newtheorem{proposition}[theorem]{Proposition}
\newtheorem{corollary}[theorem]{Corollary}
\newtheorem{lemma}[theorem]{Lemma}
\newtheorem{remark}[theorem]{Remark}
\newtheorem{example}[theorem]{Example}

\newcommand{\Sym}{\mathrm{Sym}}
\newcommand{\sym}{\mathrm{sym}}

\newcommand{\R}{\mathbb{R}}
\newcommand{\N}{\mathbb{N}}
\newcommand{\abs}[1]{\left\lvert #1 \right\rvert}
\newcommand{\diff}{\mathop{}\!\mathrm{d}}

\newcommand{\defeq}{\vcentcolon=}

\newcommand{\mc}[1]{\mathcal{#1}}

\DeclareMathOperator{\OO}{O}
\DeclareMathOperator{\oo}{o}

\DeclareMathOperator{\argmin}{argmin}
\DeclareMathOperator{\diag}{diag}

\newcommand{\injnorm}[1]{\lnorm #1 \rnorm_{\mathrm{inj}}}
\newcommand{\injnormR}[1]{\lnorm #1 \rnorm_{\mathrm{inj},\R}}
\newcommand{\injnormC}[1]{\lnorm #1 \rnorm_{\mathrm{inj},\C}}
\newcommand{\injnormK}[1]{\lnorm #1 \rnorm_{\mathrm{inj},\K}}

\makeatletter
 \newcommand{\linkdest}[1]{\Hy@raisedlink{\hypertarget{#1}{}}}
\makeatother

\title{A moment-based approach to the injective norm of random tensors}

\author{Stephane Dartois}
\address{\'Ecole Polytechnique, Institut Polytechnique de Paris, Centre de Mathématiques Laurent Schwartz, Palaiseau, France}
\email{stephane.dartois@polytechnique.edu}
\author{Benjamin M\textsuperscript{c}Kenna}
\address{Georgia Institute of Technology, School of Mathematics, Atlanta, GA, United States}
\email{mckenna@math.gatech.edu}
\begin{document}

\begin{abstract}

In this paper, we present a technically simple method to establish upper bounds on the expected injective norm of real and complex random tensors. Our approach is somewhat analogous to the moment method in random matrix theory, and is based on a deterministic upper bound on the injective norm of a tensor which might be of independent interest. Compared to previous approaches to these problems (spin-glass methods, epsilon-net techniques, Sudakov-Fernique arguments, and PAC-Bayesian proofs), our method has the benefit of being nonasymptotic, relatively elementary, and applicable to non-Gaussian models. We illustrate our approach on various models of random tensors, recovering some previously known (and conjecturally tight) bounds with simpler arguments, and presenting new bounds, some of which are provably tight. From the perspective of statistical physics, our results yield rigorous estimates on the ground-state energy of real and complex, possibly non-Gaussian, spin glass models. From the perspective of quantum information, they establish bounds on the geometric entanglement of random bosonic states and of random states with bounded multipartite Schmidt rank, both in the thermodynamic limits as well as the regimes of large local dimensions.
\end{abstract}

\maketitle

\vspace*{0.05in}

\noindent \emph{Date:} March 1, 2026

\vspace*{0.05in}

\noindent \hangindent=0.2in \emph{Keywords and phrases:} Injective norm, Spectral norm, Random tensors, Large moments, Gaussian tensors, Geometric entanglement, Spherical spin glass

\vspace*{0.05in}

\noindent \emph{2020 Mathematics Subject Classification:} 81P45, 81P42, 82D30, 60B20, 15B52

\vspace*{5mm}

\tableofcontents


\section{Introduction}
The goal of this work is to develop a new method to upper bound the injective norm (a tensor analogue of the operator norm of a matrix) of random tensors. 
Our approach is a \emph{moment method}, inspired by the classical moment method in random matrix theory, which is frequently used (dating back to \cite{furedi1981eigenvalues}) to study the operator norm of a random matrix. (For treatments of this method in the matrix context, see, \textit{e.g.}, \cite{Tao2012, van2017structured}.) However, our method is technically different: it relies on what we think to be new deterministic inequalities controlling even powers of the injective norm in terms of moments of projections onto random rank-one tensors. In the settings considered here, this strategy is conceptually straightforward and often technically lighter than existing approaches for random tensor injective norms \cite{auffinger2013random,TomSuz2014,dartois2024injective,sasakura2024signed,bandeira2024geometric,boedihardjo2024injective,bates2025balanced,dartois2025injective}. Moreover, we believe this mechanism is robust to input distributions and to several structural constraints on the tensor, and it should transfer to other settings where injective norms (or closely related multilinear forms) arise. As proof of concept of this method, we give upper bounds for the injective norms of various models of random tensors. In some regimes, we can recover known upper bounds that are believed to be tight, with significantly simpler arguments; in other regimes, including for models that appear to be out of reach of previous techniques, we find new bounds. We now introduce formally the problem and models studied in the paper before discussing our main results and their context.


\subsection{Problem set-up and models}
Let $\K\in\{\R,\C\}$ be the field of either real or complex numbers. Let $p\in \mathbb{N}^*$ be a positive integer, $d\in \mathbb{N}^*$ and $d_1,\ldots, d_p\in \mathbb{N}^*$ be a family of $p$ positive integers. For all $x,y\in \K^{d_i}$ denote their Hermitian product by $\langle x,y\rangle=\sum_{k=1}^{d_i}\bar{x}_k y_k$. For $T\in \bigotimes_{i=1}^p\K^{d_i}$ a tensor, we define its injective norm $\injnormK{T}$ as the quantity 
\begin{equation}
    \injnormK{T}:=\max_{\substack{x_i\in \K^{d_i}\\\lnorm x_i\rnorm_2=1}}\lvert\langle T,x_1\otimes \ldots\otimes x_p\rangle \rvert = \max_{\substack{x_i\in \K^{d_i}\\\lnorm x_i\rnorm_2=1}} \abs{ \sum_{i_1=1}^{d_1} \cdots \sum_{i_p=1}^{d_p} \overline{T_{i_1,\ldots,i_p}} (x_1)_{i_1} \cdot \ldots \cdot (x_p)_{i_p}}.
\end{equation}
This is the main quantity of interest in this paper; see Section \ref{subsec:context} below for more discussion and motivation. In fact, we study the average injective norm of random tensors $T$ from different ensembles which we define below. We start with the following definition.
\begin{definition}
\label{def:injective-norm}
 We say that a $\R$-valued random variable $X$ is  $\R$-rigidly sub-Gaussian if it is symmetric, \textit{i.e.} $X\overset{\text{law}}{=}-X$, and its non-vanishing moments are dominated by those of a centered Gaussian random variable with the same variance, that is, $\E[X^{2k}]\le (2k-1)!!\E[X^2]^{k}.$ \\

 \noindent We say that a $\C$-valued random variable $Y$ is $\C$-rigidly sub-Gaussian if it is circularly symmetric (and in particular centered), \textit{i.e.} for all $\theta \in \R,$ $Y\overset{\text{law}}{=}e^{i\theta}Y$, and the even moments of its modulus are dominated by those of a circularly symmetric complex Gaussian random variable with the same variance, that is, $\E[\lvert Y\rvert^{2k}]\le k! \E[\lvert Y\rvert^{2}]^k.$
\end{definition}

In recent papers such as \cite{GuiHus2020,CooDucGui2023,BobChiGot2024}, the terms ``sharp sub-Gaussian'' and ``strict sub-Gaussian'' have been used for random variables whose moment-generating functions are upper-bounded by those of a Gaussian with the same variance. We remark that rigidly sub-Gaussian variables are a subclass of these.\\

\noindent{\bf \linkdest{model:ak}Model $A_\K$.}
For $\K \in \{\R,\C\}$, we say that an \emph{asymmetric} random tensor $T$ comes from ``Model $A_\K$'' ($A$ for ``asymmetric'') if it lies in $\K^{d_1} \otimes \cdots \otimes \K^{d_p}$, and has i.i.d. entries which are $\K$-rigidly sub-Gaussian, which are normalized as $\E[|T_{1,\ldots,1}|^2] = \frac{1}{(d_1 \cdots d_p)^{1/p}}$.
\begin{example}
We give examples of random tensors in \hyperlink{model:ak}{Models $A_\R$} and \hyperlink{model:ak}{$A_\C$}:
\begin{itemize}
    \item Consider a tensor $A\in \bigotimes_{i=1}^p\R^{d_i}$ whose entries are i.i.d. Rademacher variables, or i.i.d. $\text{Uniform}([-\sqrt{3},\sqrt{3}])$ variables, or of course i.i.d. standard Gaussian variables. The rescaled tensor $T=\frac1{(d_1\ldots d_p)^{\frac1{2p}}}A$ is a tensor of \hyperlink{model:ak}{Model $A_\R$}. 
    \item For a complex example, consider a tensor $A\in \bigotimes_{i=1}^p \C^{d_i}$, whose entries are i.i.d. uniform on the complex unit circle (also called Steinhaus random variables). Then $T=\frac1{(d_1\ldots d_p)^{\frac1{2p}}}A$ is a tensor of \hyperlink{model:ak}{Model $A_\C$}. Moreover, the Frobenius norm of $T$ is deterministic, $\lnorm T\rnorm_{\textup{HS}}^2=\sum_{i_1,\ldots,i_p}\lvert T_{i_1,\ldots,i_p} \rvert^2=(d_1\ldots d_p)^{1-1/p}.$ Hence, $(d_1\ldots d_p)^{\frac12\left(\frac1p-1\right)}T$ is a random quantum state, which we call a Steinhaus random state. 
\end{itemize}
\end{example}
\noindent{\bf \linkdest{model:sc}Model $S_\C$.}
Let $\gamma$ be a $\C$-rigidly sub-Gaussian random variable with variance $\E[|\gamma|^2] = 1/d$. We say that a \emph{symmetric} random tensor $B\in \Sym_p(\C^d)$ comes from ``Model $S_\C$'' ($S$ for ``symmetric''), if:
\begin{itemize}
 \item its entries are independent up-to-symmetry,
 \item $B_{i_1,\ldots,i_p}\overset{\text{law}}{=}\sqrt{\frac{\lvert\text{Stab}_{\mathfrak{S}_p}((i_1,\ldots,i_p))\rvert}{p!}}\gamma,$ where $\text{Stab}_{\mathfrak{S}_p}((i_1,\ldots,i_p))$ denotes the subgroup of the permutation group over $p$ elements $\mathfrak{S}_p$ stabilizing the $p-$tuple $(i_1,\ldots,i_p)$. Concretely, if $(i_1,\ldots,i_p)$ has $k$ distinct elements with respective multiplicities $m_1,\ldots,m_k$, then $B_{i_1,\ldots,i_p} \overset{\text{law}}{=} \sqrt{\frac{m_1! \cdots m_k!}{p!}} \gamma$.
\end{itemize}
\noindent{\bf \linkdest{model:stildec}Model $\widetilde{S}_\C$.}
Consider $T\in (\C^d)^{\otimes p}$ a random tensor of \hyperlink{model:ak}{Model $A_\C$}. We say a symmetric random tensor $\widetilde{B}\in \text{Sym}_p(\C^d)$ belongs to model $\widetilde{S}_\C$, if $\widetilde{B}:=P_{\sym ,p}^{(d)}(T)$, where $P_{\sym,p}^{(d)}:(\C^d)^{\otimes p}\rightarrow \text{Sym}_p(\C^d)\subseteq(\C^d)^{\otimes p}$ is the projector on the symmetric subspace. Equivalently, $\widetilde{B}$ is the average over all permutations of $T$, that is $\widetilde{B} = \frac1{p!}\sum_{\sigma \in \mathfrak{S}_p} \sigma(T)$, where the permutation group $\mathfrak{S}_p$ acts as described in the Notations section below.  
\begin{remark}
    We mention that, while Models \hyperlink{model:sc}{$S_\C$} and \hyperlink{model:stildec}{$\widetilde{S}_\C$} are the same in the Gaussian case, they are different in general; see Remark \ref{rem:SCvstildeSC} for details.
\end{remark}

\noindent{\bf \linkdest{model:bc}Model $B_\C$.}
Let $R\in \mathbb{N}$ and let $x_1^{(i)},\ldots,x_p^{(i)}$ be, for each $1\le i\le R$, a family of $p$ independent vectors $x_m^{(i)}$ with i.i.d. $\C$-rigidly sub-Gaussian entries with variance set to $1/d$, independent for different $i$'s. We consider the random tensor $T=\sum_{i=1}^Rx_1^{(i)}\otimes \ldots\otimes x_p^{(i)}.$ The entries of $T$ are identically distributed but correlated, and their variance is $\E(\lvert T_{i_1,\ldots,i_p}\rvert^2)=\frac{R}{d^p}$. We call this model the complex bounded-rank random tensor model $B_\C$ ($B$ for ``bounded''), or simply the bounded-rank model.


\subsection{Results}
Our main results can be informally recast as follows:
\begin{theorem}
\textbf{(Informal main theorems)} If $T$ is any deterministic tensor, we give an upper bound for its injective norm in terms of its averaged projections on uniformly random vectors on the sphere (Theorem \ref{thm:deterministic_bound}). As applications of this result, we give upper bounds on random tensors from the Models \hyperlink{model:ak}{$A_\K$} (Theorem \ref{thm:asymmetric}), \hyperlink{model:sc}{$S_\C$} and \hyperlink{model:stildec}{$\widetilde{S}_\C$} (Theorem \ref{thm:symm_complex_tensors_bound}), and \hyperlink{model:bc}{$B_\C$} (Theorem \ref{thm:bounded_rank}).
\end{theorem}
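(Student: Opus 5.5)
The informal theorem has two parts: a deterministic inequality, and its specialization to each random model. I will prove the deterministic inequality first and then take expectations.

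\textbf{Deterministic bound.} Let $T\in\bigotimes_i\K^{d_i}$ and let $(x_1^*,\dots,x_p^*)$ be a maximizer of $\lvert\langle T,x_1\otimes\cdots\otimes x_p\rangle\rvert$. Let $u_i$ be independent uniform vectors on the unit sphere of $\K^{d_i}$. The idea is that a random rank-one tensor $u_1\otimes\cdots\otimes u_p$ has overlap with $x_1^*\otimes\cdots\otimes x_p^*$ which, while small, is not negligible. So large moments of $\langle T,u_1\otimes\cdots\otimes u_p\rangle$ should detect $\injnormK{T}$.

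To make this precise, I would write $u_i=\alpha_i x_i^*+\beta_i w_i$ with $w_i\perp x_i^*$, and expand $\langle T,\otimes_i u_i\rangle$ as a multilinear sum over subsets $S\subseteq[p]$. The term with $S=[p]$ is $\prod_i\alpha_i\cdot\langle T,\otimes x_i^*\rangle$.

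By criticality of the maximizer, the terms in which exactly one factor is replaced by $w_i$ vanish. This holds because $x_i^*$ is proportional to the partial contraction of $T$ against the other maximizing vectors, which is orthogonal to $w_i$.

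Using the rotational symmetry of $w_i$, one then hopes to show that
\[
\E\bigl[\lvert\langle T,\otimes u_i\rangle\rvert^{2k}\bigr]\ \ge\ \E\Bigl[\prod_i\lvert\alpha_i\rvert^{2k}\Bigr]\,\injnormK{T}^{2k}.
\]
The model argument is conditioning on $\alpha$ and using a Jensen/convexity step. In the real case one conditions on $|\alpha_i|$, applies $\E|a+Z|^{2k}\ge |a|^{2k}$ for symmetric $Z$, and symmetrizes over the sign of $\alpha_i$.

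Concretely, I plan to average over the sign flips $u_i\mapsto$ its reflection across $x_i^{*\perp}$, which fix $w_i$. Averaging over the product of sign groups, the $S=[p]$ coefficient appears with a fixed sign while the other terms are averaged. Convexity of $|\cdot|^{2k}$ then gives the bound above, provided the other terms average to a function independent of the full sign pattern. I expect this requires care, and I would instead use the inequality $\E_\epsilon\lvert\sum_S \epsilon^S c_S\rvert^{2k}\ge |c_{[p]}|^{2k}$. This holds since $c_{[p]}=\E_\epsilon[\epsilon^{[p]}\sum_S\epsilon^Sc_S]$, followed by Hölder. In the complex case the same argument works with phases $e^{i\theta}$ in place of signs.

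Since $\E|\alpha_i|^{2k}$ is an explicit Beta moment, this yields
\[
\injnormK{T}^{2k}\le \prod_i \frac{1}{\E\lvert\alpha_i\rvert^{2k}}\;\E\bigl\lvert\langle T,\otimes u_i\rangle\bigr\rvert^{2k}.
\]
In the real case $\E|\alpha_i|^{2k}=\frac{(2k-1)!!}{d_i(d_i+2)\cdots(d_i+2k-2)}$, and in the complex case it is $\binom{d_i+k-1}{k}^{-1}$.

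\textbf{Random models.} Take the expectation over $T$ and exchange it with the average over the $u_i$. Given $u$, the quantity $\langle T,\otimes u_i\rangle$ is a linear combination of independent rigidly sub-Gaussian entries with coefficient vector of unit norm. Its $2k$-th moment is therefore dominated by the Gaussian one. This comparison is the reason for the rigid sub-Gaussian definition: expand the multinomial, use symmetry to kill the odd terms, and bound each even moment. The result is $(2k-1)!!\,\sigma^{2k}$ in the real case, or $k!\,\sigma^{2k}$ in the complex case. This is uniform in $u$, where $\sigma^2=(d_1\cdots d_p)^{-1/p}$.

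Next, use Jensen, $\E\injnormK{T}\le(\E\injnormK{T}^{2k})^{1/2k}$, and optimize $k\asymp\sum_i d_i$. Stirling then gives bounds such as $\sqrt{2\sum_i d_i\log(\cdot)/(\cdots)}$-type constants.

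For Models $S_\C$ and $\widetilde S_\C$, I take all $u_i=u$ equal, using Banach's theorem that symmetric tensors achieve the injective norm at symmetric points, and then recompute the variance of $\langle B,u^{\otimes p}\rangle$. For $B_\C$, $\langle T,\otimes u_i\rangle=\sum_{r\le R}\prod_m\langle x^{(r)}_m,u_m\rangle$, and the moments are controlled by the same comparison applied factor by factor.

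\textbf{Main obstacle.} The hardest step is the deterministic lower bound on the moment, in particular cleanly isolating the $S=[p]$ term via sign or phase averaging without losing more than constants. If this fails, I would fall back to restricting $u$ to a spherical cap around $x^*$ and using Lipschitz continuity. That loses constants but still yields the right order.
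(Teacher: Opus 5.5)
Your proof of the deterministic bound is correct, and it takes a genuinely different route from the paper's.

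\textbf{Your deterministic argument.} Write $u_i=\alpha_i x_i^*+v_i$ with $\alpha_i=\langle x_i^*,u_i\rangle$. The reflection (real case) or phase rotation (complex case) acting only on the $x_i^*$-component preserves the uniform measure. It turns $\langle T,u_1\otimes\cdots\otimes u_p\rangle$ into $\sum_S\chi_S c_S$ with pairwise distinct characters $\chi_S$. Multiplying by $\overline{\chi_{[p]}}$ and applying Jensen isolates $c_{[p]}=\prod_i\alpha_i\,\langle T,x_1^*\otimes\cdots\otimes x_p^*\rangle$. The Beta moments $\E\lvert\alpha_i\rvert^{2k}$ are exactly the reciprocals of the constants in parts a and b of Theorem~\ref{thm:deterministic_bound}.

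For part c, a single phase on $u$ weights the $j$-th term of $\langle B,(\alpha x^*+v)^{\otimes p}\rangle$ by $e^{ij\theta}$. This isolates $j=p$ with $\E\lvert\alpha\rvert^{2pk}=\binom{d+pk-1}{pk}^{-1}$, again the paper's constant.

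Two steps you planned are unnecessary. The criticality step is not needed, because orthogonality of characters already removes every $S\neq[p]$. The spherical-cap fallback is not needed either, since the symmetrization step already gives the sharp constants.

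\textbf{Comparison with the paper.} The paper proves parts b and c via the exact identity $\E_u\lvert\langle T,\otimes_i u_i\rangle\rvert^{2k}=\prod_i\binom{d_i+k-1}{k}^{-1}\langle T^{\otimes k},(\bigotimes_i P^{(d_i)}_{\mathrm{sym},k})T^{\otimes k}\rangle$ and its symmetric analogue, obtained by Gaussian conversion and Wick, followed by a projector inequality. It proves part a separately by induction on $p$ for Gaussian vectors (Lemma~\ref{lem:gaussian_deterministic_bound}).

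Your argument is more elementary and handles all three cases with one mechanism, removing the real/complex dichotomy noted in Remark~\ref{rem:brauer}. It also makes transparent why real symmetric tensors behave differently: a sign flip weights the $j$-th term by $\epsilon^j$, so it cannot separate $j=p$ from $j=p-2$. This is consistent with the paper's counterexample. What the paper's route buys is an exact formula for the averaged moment as a sum of local-unitary invariants (Remark~\ref{rem:bounds-to-invariants}).

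\textbf{The applications.} This part follows the paper: termwise Gaussian domination as in Lemma~\ref{lem:rigid-subgaussian-moment-bound}, then Jensen, a choice of $k$, and Stirling. It yields valid finite-$k$ bounds such as $\E\injnormC{T}\le d^{-1/2}\bigl(k!\binom{d+k-1}{k}^p\bigr)^{1/2k}$. As written, however, it does not yet give Theorems~\ref{thm:asymmetric}, \ref{thm:symm_complex_tensors_bound} and \ref{thm:bounded_rank}.

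The scale $k\asymp\sum_i d_i$ is right only for $p$ fixed and $d\to\infty$, where the paper takes $k=\lfloor d_1/\alpha\rfloor$. For $d$ fixed and $p\to\infty$:
\begin{itemize}
\item In the asymmetric case, $\binom{d+k-1}{k}^{p/2k}$ stays bounded only if $k\gtrsim p\log p$; with $k=cpd$ it grows like $p^{(d-1)/(2cd)}$. You need $k=\lfloor\alpha p\log p\rfloor$ and optimization in $\alpha$.
\item In the symmetric case you need $k=\lfloor\alpha\log p\rfloor$. With $k\asymp pd$, the factor $(k!)^{1/2k}$ alone is of order $\sqrt p$, against the target $\sqrt{\log p}$.
\end{itemize}

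For $\widetilde S_\C$, the entries of $\widetilde B$ are not independent. Use $\langle\widetilde B,u^{\otimes p}\rangle=\langle T,u^{\otimes p}\rangle$ to reduce to Model $A_\C$.

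For $B_\C$, the factor-by-factor comparison gives $\E_T\E_u\lvert\langle T,\otimes_c u_c\rangle\rvert^{2k}\le d^{-pk}(k!)^2\sum_{\mathbf a}\prod_s(\mathbf a_s!)^{p-2}$. To conclude you still need three things:
\begin{itemize}
\item that this sum is $(k!)^{p-2}e^{o(k)}$, as in Lemma~\ref{lem:double_bound_constrained_sum};
\item to send $k\to\infty$ with $k/d\to0$;
\item for the equality $\lim_{d\to\infty}\E\injnormC{T}=1$, a separate lower bound, for instance by testing $T$ against its normalized first summand.
\end{itemize}
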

These bounds in many cases improve on the state of the art; see Appendix \ref{app:prior-bounds-comparison} for a detailed comparison to existing methods. We also mention that the real-symmetric case of our main result is partially comparable to well-known bounds of Lata\l a \cite{Lat2006} on moments of Gaussian chaoses; see Remark \ref{rem:latala}. The proof of those results only relies on elementary analysis, mostly involving standard calculus methods to study asymptotics when the order of the moment grows to infinity. Standard combinatorial devices are also used in this moment method, which are only needed to extend our results beyond the Gaussian case. 


\subsection{Context}
\label{subsec:context}

The injective norm of tensors appears in different contexts, and these motivate our work on the topic. We proceed to describe some of them; see also the introduction to our prior work \cite{dartois2024injective} for a more in-depth discussion, including appearances of the injective norm in statistical physics (specifically in spin glasses) and data analysis (specifically tensor PCA).\\

\noindent{\bf Quantum information.}
The injective norm is an important quantity in quantum information, where it is known under the name of geometric entanglement; see \textit{e.g.} \cite{wei2003geometric,aulbach2010maximally,aulbach2010geometric,aubrun2017alice,dartois2025injectiveCSS}. 
In this context, the tensor is a pure quantum state $\ket{\psi}$, hence has Frobenius norm one, and the injective norm is the maximal overlap of $\ket{\psi}$ with separable states, therefore providing a measure of multipartite entanglement. Our work fits in the broader program of studying generic entanglement properties of quantum states, where one aims at probabilistic asymptotic statements and entanglement thresholds. While this program has produced an extensive literature in the case of bipartite entanglement (i.e., the matrix case $p = 2$ above), studying bipartite entanglement measures and criteria for a variety of different ensembles of quantum states 
\cite{sommers2004statistical, aubrun2012phase, aubrun2012realigning, aubrun2014entanglement, hayden2016holographic, nakata2020generic, BianchiKieburg2021, dong2021holographic, dartois2022entanglement, bianchi2022volume, pastur2024entanglement, cheng2024random}, the multipartite case (i.e., the tensor case $p \geq 3$ above) is much less understood (see however \cite{GroFlaEis2009,friedland2018most,penington2023fun, akers2024reflected}). It then seems natural to study the generic properties of the injective norm, and its behavior for a variety of random states. This seems even more so given that computing the injective norm of a generic state is a NP-hard problem \cite{hillar2013most}. In this context, our main theorems give lower bounds on the entanglement of random quantum states.\\

\noindent{\bf Hypergraphs spectral properties and applications.} 
In the classical area of spectral graph theory, one learns about graphs by studying their associated matrices (such as their adjacency and Laplacian matrices). Random graphs correspond to random matrices. However, uniform (random) \emph{hypergraphs} are associated instead with (random) \emph{tensors}, and various lines of research have developed hypergraph-theoretic interpretations of the injective norm. For example, the second eigenvalue of a matrix is known to determine its properties as a so-called ``expander graph''; \cite{friedman1995second} used the injective norm to develop a notion of the second eigenvalue of a tensor, and related this to the notion of expander hypergraphs. Their motivation stems from complexity theory, as hypergraphs that are sufficiently good expanders can be turned into strong dispersers (\textit{i.e.} a particular type of maps between two languages). It is known through a theorem of Sipser \cite{Sipser1988} that the existence of such dispersers implies either the collapse of $\mathbf{RP}$ (for Randomized Polynomial time) class to $\mathbf{P}$, or a strong trade-off between time and space. More recently and parallel to this question, the injective norm appeared in the problem of community detection (see \cite{chan2018spectral,lin2025phase} and references therein). There it plays the role of a ``spectral'' probe used to detect communities in networks.\\

\noindent{\bf Generalization of random matrix theory.}
Over the past decades, random matrix theory has become a central area of mathematics, with connections to mathematical physics, probability, statistics, numerical linear algebra, and theoretical computer science \cite{mehta2004random,forrester2010log,anderson2010introduction,potters2020first}. Among its core themes, one salient question -- in both theory and applications -- is the behavior of the top eigenvalue or singular value, including its universality \cite{ErdYau2017} and limiting laws \cite{bai1988necessary,tracy1994level,soshnikov1999universality,benarous2001aging}. From this perspective, the case $p=2$ of the injective norm is the top singular value of a matrix, and higher values of $p$ provide a natural extension to tensors. As was noted in \cite{dartois2025injective}, it is therefore natural to ask what can be said about the injective norm, with what precision and what degree of universality, mimicking the corresponding program in the theory of random matrices (see \textit{e.g.} \cite{subag2017extremal,huang2023constructive,stojnic2025ground} for some related results, due to statistical physics and the landscape complexity program).
Among the many difficulties one faces, the lack of spectral decomposition and the limitations of tools dedicated to the excursion probability of non-Gaussian processes are the most obvious ones. Our large moment method aims to mitigate those difficulties, proposing a new approach, with some level of robustness, which we hope can support a somewhat systematic study of the injective norm of random tensors.\\


\subsection*{Organization of the paper}

In Section \ref{sec:deterministic_bound}, we state and prove our main upper bounds on the injective norm of deterministic tensors. In Sections \ref{sec:application_asymmetric}, \ref{sec:application_symmetric}, and \ref{sec:bounded_rank}, we apply these bounds to estimate the injective norm of random tensors from Models \hyperlink{model:ak}{$A_\K$}; \hyperlink{model:sc}{$S_\C$} and \hyperlink{model:stildec}{$\widetilde{S}_\C$}; and \hyperlink{model:bc}{$B_\C$}, respectively. The paper ends with four appendices. Appendix \ref{app:numerics_experiment} gives numerics supporting our bounds. Appendix \ref{app:rank_property} gives an algebraic-geometric argument showing that, in the Lebesgue absolutely continuous case of the bounded-rank model \hyperlink{model:bc}{$B_\C$}, the rank is not just upper-bounded by $R$ (which is immediate from the construction) but is actually equal to $R$ almost surely. Sections \ref{sec:application_asymmetric}, \ref{sec:application_symmetric}, and \ref{sec:bounded_rank} give asymptotic bounds as the tensor grows (either $d \to \infty$ or $p \to \infty$), but Appendix \ref{app:non-asymptotic-upper-bound} illustrates how our method can also give non-asymptotic bounds for $d$ and $p$ fixed, which can additionally be computed in a numerically efficient way. Finally, in Appendix \ref{app:prior-bounds-comparison}, we compare our new method to previous ones (Kac--Rice arguments, Sudakov--Fernique arguments, epsilon-net arguments, and PAC-Bayesian arguments) and show that our bound in many cases improves on the previous state of the art.


\subsection*{Notations}
Throughout the course of this paper, we denote by $$\bSR^{d-1}:=\{x\in \R^d:\lnorm x\rnorm_2=1\},$$
the $(d-1)$-dimensional unit sphere in $\R^d$, that is the sphere of vectors $x\in \R^d$ whose $2$-norm is set to $1.$ We denote by 
$$\bSC^{d-1}:=\{x\in \C^d:\lnorm x\rnorm_2=1\},$$
the complex unit sphere (which is a real manifold with $2d-1$ real dimensions).

For all $n\in \mathbb{N},$ we denote by $\mathfrak{S}_n$ the permutation group over $n$ elements. We often consider its action on tensors: for all $x_1,\ldots,x_n\in \K^d,\ \sigma \in \mathfrak{S}_n,$ the action on simple tensors is $\sigma(x_1\otimes \ldots\otimes x_n)=x_{\sigma(1)}\otimes \ldots \otimes x_{\sigma(n)}$, and it extends linearly. For a tensor $T \in \K^{d_1} \otimes \cdots \otimes \K^{d_p}$, we write $\|T\|_{\textup{HS}}$ for the Frobenius (or Hilbert-Schmidt) norm $\|T\|_{\textup{HS}}^2 = \sum_{i_1=1}^{d_1} \cdots \sum_{i_p=1}^{d_p} \abs{T_{i_1,\ldots,i_p}}^2$.

{
  \let\oldaddcontentsline\addcontentsline
  \renewcommand{\addcontentsline}[3]{}
\section*{Acknowledgments}
We wish to thank Yizhe Zhu for helpful discussions and references, as well as Steven Heilman for interesting suggestions and related references. We are also grateful to the  School of Mathematics at the Georgia Institute of Technology and the Institut de Mathématiques de Bordeaux for their hospitality during parts of this work. The work of S.D. was partly supported by the ANR grants ANR-25-CE40-1380 and ANR-25-CE40-5672. The work of B.M. was partly supported by an AMS-Simons Travel Grant. 
}


\section{Deterministic upper bound}\label{sec:deterministic_bound}

The purpose of this section is to prove the following theorem. 

\begin{theorem}\label{thm:deterministic_bound}
For all $k,p \in \mathbb{N}^*$, the following upper bounds hold:
\begin{enumerate}
    \item[a.] Let $T\in \bigotimes_{i=1}^p\R^{d_i}$ be a real tensor. 
    Let $u_i\in \bSR^{d_i-1}$ be independent uniform random vectors on the real unit spheres of dimensions $d_i$. One has
    \begin{equation}\label{eq:real_inj_norm_bounded}
       \injnormR{T}^{2k}\le \left(\prod_{i=1}^{p}\frac{2^k\Gamma(\frac{d_i}{2}+k)}{(2k-1)!!\Gamma(\frac{d_i}{2})}\right) \E_{u}\left( \langle T,u_1\otimes \ldots\otimes u_p\rangle^{2k}\right). 
    \end{equation}
    \item[b.] Let $T\in\bigotimes_{i=1}^p\C^{d_i}$ be a complex tensor. 
    Let $u_i\in \mathbb{S}_\C^{d_i-1}$ be uniform random vectors on the complex unit spheres of dimensions $d_i$. One has
    \begin{equation}\label{eq:inj_norm_bounded}
       \injnormC{T}^{2k}\le \left( \prod_{i=1}^p\binom{d_i+k-1}{k} \right) \E_{u}\left( \lvert \langle T,u_1\otimes \ldots\otimes u_p\rangle\rvert^{2k}\right).
    \end{equation}
    \item[c.] Let $B\in \text{Sym}_p(\C^d)\subseteq (\C^d)^{\otimes p}$ be a complex symmetric tensor. Let $u\in \mathbb{S}_\C^{d-1}$ be a uniform random vector on the complex unit sphere of dimension $d$. One has
 \begin{equation}\label{eq:symmetric_inj_norm_bounded}
     \injnormC{B}^{2k}\le \binom{d+pk-1}{pk}\E_u(\lvert \langle B,u^{\otimes p}\rangle\rvert^{2k}).
 \end{equation}
\end{enumerate}
\end{theorem}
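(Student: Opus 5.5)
The plan is a single Gaussian-lifting argument that handles all three parts at once. The constants in the statement are ratios of Gaussian moments. For a standard real Gaussian vector $g\in\R^{d}$,
\[
\E\lnorm g\rnorm_2^{2k}=\frac{2^k\Gamma(\frac d2+k)}{\Gamma(\frac d2)},\qquad \E g_1^{2k}=(2k-1)!!.
\]
For a standard circularly symmetric complex Gaussian vector $g\in\C^d$,
\[
\E\lnorm g\rnorm_2^{2k}=\frac{\Gamma(d+k)}{\Gamma(d)},\qquad \E|g_1|^{2k}=k!,
\]
and the ratio of these two is $\binom{d+k-1}{k}$. Similarly, in part c the ratio of $\Gamma(d+pk)/\Gamma(d)$ to $(pk)!$ is $\binom{d+pk-1}{pk}$.

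\textbf{Step 1 (polar decomposition).} Write $g_i=\lnorm g_i\rnorm\,u_i$, where $u_i$ is uniform on the sphere and independent of $\lnorm g_i\rnorm$. By multilinearity,
\[
\E_g|\langle T,g_1\otimes\cdots\otimes g_p\rangle|^{2k}=\Big(\prod_i \E\lnorm g_i\rnorm^{2k}\Big)\,\E_u|\langle T,u_1\otimes\cdots\otimes u_p\rangle|^{2k}.
\]
In part c there is a single $g$, and $\langle B,g^{\otimes p}\rangle$ is homogeneous of degree $p$. The analogous identity therefore carries the factor $\E\lnorm g\rnorm^{2pk}$. Given this, each claimed inequality is equivalent to a Gaussian lower bound: $\E_g|\langle T,\otimes_i g_i\rangle|^{2k}\ge \injnormK{T}^{2k}\prod_i \E|g_{i,1}|^{2k}$ in parts a and b, and $\E_g|\langle B,g^{\otimes p}\rangle|^{2k}\ge (pk)!\,\injnormC{B}^{2k}$ in part c.

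\textbf{Step 2 (decompose along a maximizer).} Take unit maximizers $x_1,\dots,x_p$ of the injective norm, and set $c=\langle T,x_1\otimes\cdots\otimes x_p\rangle$, so $|c|=\injnormK{T}$. By rotation invariance of the Gaussian, write $g_i=a_ix_i+h_i$ with $a_i$ a scalar standard Gaussian (real or complex) and $h_i$ a Gaussian vector in $x_i^\perp$, all independent. Expanding multilinearly gives
\[
\langle T,\otimes g_i\rangle = c\,a_1\cdots a_p+Y,
\]
where each monomial of $Y$ contains at least one factor $h_j$, and that factor appears linearly. Since the $h_j$ are centered and independent of $a$, we get $\E[Y\mid a]=0$. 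The function $z\mapsto|z|^{2k}$ is convex, so conditional Jensen gives
\[
\E|cA+Y|^{2k}\ge \E|cA|^{2k}=|c|^{2k}\prod_i\E|a_i|^{2k},
\]
with $A=a_1\cdots a_p$. This is exactly the bound required in parts a and b.

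\textbf{Step 3 (symmetric case).} First invoke Banach's theorem: for a symmetric tensor $B$, the injective norm is attained on a symmetric product $x^{\otimes p}$. Now write $g=ax+h$ with $h\in x^\perp$, so that
\[
\langle B,g^{\otimes p}\rangle=c\,a^p+Y,
\]
where $Y$ is a polynomial in $(a,h)$ whose terms are homogeneous of degree $\ge1$ in $h$. Here $h$ may appear quadratically, so the argument of Step 2 does not apply directly. Instead, the pairing $\langle B,\cdot\rangle$ is holomorphic (not conjugated) in $g$, and $h$ is circularly symmetric complex Gaussian. Every holomorphic monomial in $h$ of positive degree therefore has mean zero, so again $\E[Y\mid a]=0$. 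Jensen then gives the lower bound $|c|^{2k}\E|a|^{2pk}=(pk)!\,|c|^{2k}$.

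\textbf{Main obstacle.} The delicate point is this vanishing of conditional means in part c. It is what restricts part c to the complex field: in the real case $\E h_j^2\ne0$, so the argument fails there. Part c also depends on Banach's theorem, which I would cite rather than reprove. Everything else is the routine Gaussian moment computations listed at the start.
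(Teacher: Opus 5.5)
Your proof is correct, and after the shared first step (passing from the sphere to Gaussians via independence of norm and direction) it takes a genuinely different route from the paper's.

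The paper splits the argument by field. For parts b and c it uses Wick's theorem to obtain the exact identity $\E_u\lvert\langle T,u_1\otimes\cdots\otimes u_p\rangle\rvert^{2k}=\bigl(\prod_i\binom{d_i+k-1}{k}\bigr)^{-1}\langle T^{\otimes k},(\bigotimes_i P^{(d_i)}_{\mathrm{sym},k})T^{\otimes k}\rangle$, with $P^{(d)}_{\mathrm{sym},pk}$ in part c. It then lower-bounds the squared norm of this projection of $T^{\otimes k}$ by its overlap with the unit vector $x_1^{\otimes k}\otimes\cdots\otimes x_p^{\otimes k}$, which lies in the range of the projector. For part a, the real Gaussian moment tensor is a sum over pairings rather than a projector, so the paper instead proves the Gaussian lower bound by induction on $p$. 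It conditions on one $\phi_i$ at a time and uses $\E\langle\Theta,\phi\rangle^{2k}=(2k-1)!!\,\lVert\Theta\rVert_2^{2k}\ge(2k-1)!!\,\langle\Theta,x\rangle^{2k}$ where you use Jensen.

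Your splitting $g_i=a_ix_i+h_i$ followed by conditional Jensen handles all coordinates at once. It treats the real and complex asymmetric cases by the same argument. Part c needs only one extra fact: holomorphic monomials of positive degree in a circularly symmetric $h$ have mean zero. Your argument also makes clear why part c has no real counterpart, since quadratic terms in $h$ have nonzero mean; this matches the paper's counterexample to the naive real-symmetric bound.

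In return, the paper's route gives an exact expression for the complex sphere moments as the squared norm of a symmetric projection of $T^{\otimes k}$. That expression is what links these averages to local-unitary invariants later in the paper, whereas your argument gives only the inequality.
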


We start with the proof of the complex claim, which is a bit simpler.


\subsection{Proof of Theorem \ref{thm:deterministic_bound}: Complex case}
The main claim is that
    \begin{equation}
    \label{eqn:complex-asymmetric-main-claim}
        \E_{u}\left( \lvert \langle T,u_1\otimes \ldots\otimes u_p\rangle\rvert^{2k}\right)=\left(\prod_{i=1}^p\binom{d_i+k-1}{k}\right)^{-1}\left\langle T^{\otimes k},\left(\bigotimes_{i=1}^p P_{\text{sym},k}^{(d_i)}\right) T^{\otimes k}\right\rangle,
    \end{equation}
    where $P_{\text{sym}, k}^{(d_i)}=\frac1{k!}\sum_{\sigma\in\mathfrak{S}_k} \sigma$ is the projector $P_{\text{sym}, k}^{(d_i)}:(\C^{d_i})^{\otimes k}\rightarrow \text{Sym}_k(\C^{d_i})$ on the symmetric subspace. (Recall, here $\sigma\acts (\C^{d_i})^{\otimes k}$ acts by exchanging tensor factors, {\it i.e.} $\sigma(x_1\otimes\ldots\otimes x_k)= x_{\sigma(1)}\otimes \ldots \otimes x_{\sigma(k)}$ and extends linearly.) Indeed, we first rewrite
    \begin{align*}
        \abs{\ip{T,u_1 \otimes \cdots \otimes u_p}}^{2k} &= \ip{T^{\otimes k}, \ket{ (u_1 \otimes \cdots \otimes u_p)^{\otimes k} } \bra{ (u_1 \otimes \cdots \otimes u_p)^{\otimes k}} T^{\otimes k}} \\
        &= \ip{T^{\otimes k}, \left( \bigotimes_{i=1}^p \ket{u_i^{\otimes k}} \bra{u_i^{\otimes k}} \right) T^{\otimes k}},
    \end{align*}
    then average over $u$ and use independence to interchange expectation and $\bigotimes_{i=1}^p$. Then we notice that
    \begin{equation}\label{eq:uniform-to-Gaussian}
    \E_{u_i} \lvert u_i^{\otimes k}\rangle\langle u_i^{\otimes k}\rvert=\frac1{\E_{\phi_i}(\lnorm \phi_i\rnorm_2^{2k})}\E\lvert \phi_i^{\otimes k}\rangle\langle \phi_i^{\otimes k}\rvert,
    \end{equation}
    where $\phi_i\in \C^{d_i}$ is a standard complex random normal vector, i.e., its entries are i.i.d., and their real and imaginary parts are i.i.d. centered Gaussians of variance $1/2$. This holds because $u_i$ has the same distribution as $\varphi_i/\|\varphi_i\|_2$, and the norm and direction of a normal vector with independent identically distributed entries are independent random variables. We have $\E\lvert \phi_i^{\otimes k}\rangle\langle \phi_i^{\otimes k}\rvert=\sum_{\sigma\in\mathfrak{S}_k}\sigma = k! P^{(d_i)}_{\text{sym},k}$ as a consequence of say Wick-Isserlis theorem.\footnote{This can be shown in many different ways.} Since $\E_{\varphi_i}(\|\varphi_i\|_2^{2k}) = \Gamma(d+k)/\Gamma(k)$, this completes the proof of \eqref{eqn:complex-asymmetric-main-claim}.
    
    Now, for all unit-norm $v$ in the image $\text{Im}\left(\bigotimes_{i=1}^pP_{\text{sym},k}^{(d_i)}\right)$, we have 
    \begin{align*}
        \abs{\ip{T^{\otimes k}, v}}^2 &= \abs{\ip{T^{\otimes k}, \left(\bigotimes_{i=1}^pP_{\text{sym},k}^{(d_i)}\right) v}}^2 = \abs{\ip{\left(\bigotimes_{i=1}^pP_{\text{sym},k}^{(d_i)}\right) T^{\otimes k}, v}}^2 \\
        &\leq \left\|\left(\bigotimes_{i=1}^pP_{\text{sym},k}^{(d_i)}\right) T^{\otimes k}\right\|^2 \|v\|^2 = \ip{ \left(\bigotimes_{i=1}^pP_{\text{sym},k}^{(d_i)}\right) T^{\otimes k}, \left(\bigotimes_{i=1}^pP_{\text{sym},k}^{(d_i)}\right) T^{\otimes k}} \\
        &= \ip{ T^{\otimes k}, \left(\bigotimes_{i=1}^pP_{\text{sym},k}^{(d_i)}\right) T^{\otimes k}},
    \end{align*}
    since $\left(\bigotimes_{i=1}^pP_{\text{sym},k}^{(d_i)}\right)$ is a projector. In particular, for any choice of $x_i \in \C^{d_i}$ of unit norm, setting $v=x_1^{\otimes k}\otimes x_2^{\otimes k}\otimes \ldots \otimes x_p^{\otimes k}$ leads to 
    \[
        \left\langle T^{\otimes k},\left(\bigotimes_{i=1}^pP_{\text{sym},k}^{(d_i)}\right) T^{\otimes k}\right\rangle\ge\left\lvert\langle T^{\otimes k},x_1^{\otimes k}\otimes x_2^{\otimes k}\otimes \ldots \otimes x_p^{\otimes k}\rangle \right\rvert^2=\left\lvert\langle T,x_1\otimes x_2\otimes \ldots \otimes x_p\rangle \right\rvert^{2k}.
    \]
    Then, maximizing over the choice of $x_i$, one finds 
    \[
        \left\langle T^{\otimes k},\left(\bigotimes_{i=1}^pP_{\text{sym},k}^{(d_i)}\right) T^{\otimes k}\right\rangle\ge\injnormC{T}^{2k}.
    \]
    This proves the claim. \qed


\subsection{Proof of Theorem \ref{thm:deterministic_bound}: Symmetric complex case}
We first recall that for all $B\in \text{Sym}_p(\C^d)$, the injective norm is realized as $\injnormC{B}=\max_{\substack{x\in \C^d\\ \lnorm x\rnorm_2=1}} \lvert\langle B,x^{\otimes p}\rangle \rvert$ (this is a classical result, sometimes called Banach's theorem after \cite{Ban1938}, although \cite{Wat1990} has antedated the result to \cite{Kel1928,vanSch1935}).
Taking note of this fact, the proof of the symmetric version uses similar arguments as the complex case proof of Theorem \ref{thm:deterministic_bound}. One change is that, since we only have one $u$ instead of $p$ of them, we replace $\otimes_{i=1}^p P^{(d_i)}_{\text{sym},k}$ with $P^{(d)}_{\text{sym},pk}$; thus \eqref{eqn:complex-asymmetric-main-claim} is replaced with $\E_u(\lvert \langle B,u^{\otimes p}\rangle\rvert^{2k})=\frac{(pk)!}{\E(\rnorm\phi\lnorm_2^{2pk})}\langle B^{\otimes k}, P_{\text{sym}, pk}^{(d)} B^{\otimes k}\rangle$. To obtain a lower bound we use the same linear algebra trick. For any unit-norm $v\in \text{Im}\left(P_{\text{sym}, pk}^{(d)}\right)$, we have $\langle B^{\otimes k}, P_{\text{sym}, pk}^{(d)} B^{\otimes k}\rangle\ge \langle B^{\otimes k}, vv^\dagger B^{\otimes k}\rangle$. In particular, for any unit-norm $x\in \C^d$ we choose $v=x^{\otimes pk}\in \text{Im}\left(P_{\text{sym}, pk}^{(d)}\right)$ to obtain $\langle B^{\otimes k}, P_{\text{sym}, pk}^{(d)} B^{\otimes k}\rangle\ge \left\lvert\langle B^{\otimes k}, x^{\otimes p k}\rangle\right\rvert^2$. We conclude by maximizing over $x$ to recover the injective norm. \qed


\subsection{Proof of Theorem \ref{thm:deterministic_bound}: Real case}
We start with a preliminary lemma. 
\begin{lemma}\label{lem:gaussian_deterministic_bound}
    Let $p\ge 1$ be an integer, $T\in \bigotimes_{i=1}^{p}\R^{d_i}$ be a fixed real tensor and $x_1\in \bSR^{d_1-1},\ldots, x_p\in \bSR^{d_p-1}$ be any family of vectors in the unit spheres. Additionally, consider a family of $p$ independent standard Gaussian vectors $\phi_1\sim\mathcal{N}_\R(0,\mathbbm{1}_{d_1}),\ldots, \phi_p\sim \mathcal{N}_\R(0,\mathbbm{1}_{d_p}).$ Then, it holds that
    \begin{equation}\label{eq:real_lower_bound_avg}
        \E(\langle T,\phi_1\otimes \ldots \otimes \phi_p\rangle^{2k})\ge((2k-1)!!)^p \langle T,x_1\otimes \ldots \otimes x_p\rangle^{2k}.
    \end{equation}
\end{lemma}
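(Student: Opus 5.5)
Fix the unit vectors $x_1,\ldots,x_p$ and argue by induction on $p$, using the rotation invariance of standard Gaussian vectors. The base case is $p=1$. There $\langle T,\phi_1\rangle$ is a centered real Gaussian with variance $\|T\|_2^2$, so
\[
\E\langle T,\phi_1\rangle^{2k}=(2k-1)!!\,\|T\|_2^{2k}\ge (2k-1)!!\,\langle T,x_1\rangle^{2k}
\]
by Cauchy--Schwarz. This case already shows the mechanism: Gaussian moments of a linear form equal $(2k-1)!!$ times the squared norm to the power $k$, and that norm dominates the projection onto any unit direction.

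For the inductive step, condition on $\phi_1,\ldots,\phi_{p-1}$. Given these, $\langle T,\phi_1\otimes\cdots\otimes\phi_p\rangle=\langle w,\phi_p\rangle$, where $w=T(\phi_1,\ldots,\phi_{p-1},\cdot)\in\R^{d_p}$ is the contraction of $T$ with the first $p-1$ vectors. Since $\phi_p$ is independent of $w$, the conditional expectation is $(2k-1)!!\,\|w\|_2^{2k}$, and $\|w\|_2^{2k}\ge\langle w,x_p\rangle^{2k}$. Now $\langle w,x_p\rangle=\langle T',\phi_1\otimes\cdots\otimes\phi_{p-1}\rangle$, where $T'\in\bigotimes_{i=1}^{p-1}\R^{d_i}$ is the contraction of $T$ with $x_p$ in the last slot. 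Taking the full expectation gives
\[
\E\langle T,\phi_1\otimes\cdots\otimes\phi_p\rangle^{2k}\ge(2k-1)!!\,\E\langle T',\phi_1\otimes\cdots\otimes\phi_{p-1}\rangle^{2k}.
\]
The induction hypothesis applied to $T'$ with $x_1,\ldots,x_{p-1}$ bounds the right side below by $((2k-1)!!)^{p-1}\langle T',x_1\otimes\cdots\otimes x_{p-1}\rangle^{2k}$. Finally, $\langle T',x_1\otimes\cdots\otimes x_{p-1}\rangle=\langle T,x_1\otimes\cdots\otimes x_p\rangle$, which closes the induction.

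The only point that needs care is the conditioning step. We need Fubini and independence of the $\phi_i$ to compute the conditional $2k$-th moment as that of a Gaussian with variance $\|w\|^2$, and we need every integrand to be nonnegative, which holds because the exponent is even. So I do not expect a real obstacle: the whole argument is the identity for Gaussian linear forms plus Cauchy--Schwarz, applied one factor at a time.

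The lemma will then feed into part a of Theorem~\ref{thm:deterministic_bound} as follows. Write $\phi_i=\|\phi_i\|u_i$ with $\|\phi_i\|$ independent of $u_i$, which gives $\E\langle T,\otimes\phi_i\rangle^{2k}=\prod_i\E\|\phi_i\|^{2k}\cdot\E_u\langle T,\otimes u_i\rangle^{2k}$. Using $\E\|\phi_i\|^{2k}=2^k\Gamma(d_i/2+k)/\Gamma(d_i/2)$ and maximizing over the $x_i$ yields exactly \eqref{eq:real_inj_norm_bounded}.
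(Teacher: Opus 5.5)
Your proof is correct and is essentially the paper's argument: induction on $p$, conditioning on some of the Gaussian vectors, and combining the identity $\E\langle w,\phi\rangle^{2k}=(2k-1)!!\,\|w\|_2^{2k}$ with Cauchy--Schwarz. The only difference is the order of peeling. The paper conditions on $\phi_1$, applies the induction hypothesis conditionally to the random contracted tensor $T_{(\phi_1)}$, and then uses the base case on $\Theta$. You instead apply the base case conditionally in $\phi_p$ and then the induction hypothesis to the deterministic contraction $T'$, which is the mirror image of the same step.
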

\begin{proof}
We prove the result by recursion on $p$.\\

\noindent{\it Case $p=1.$} In this case $T$ is simply a vector in $\R^d$, and $\langle T, \phi_1\rangle\sim \mathcal{N}_{\R}(0,\lnorm T \rnorm_2^2)$. Therefore
\begin{equation}
\label{eqn:injective-norm-of-vectors}
    \E(\langle T, \phi_1\rangle^{2k})=(2k-1)!!\lnorm T\rnorm_2^{2k}.
\end{equation}
Notice then that $\lnorm T\rnorm_2=\max_{x\in \bSR^{d-1}}\lvert \langle T, x\rangle\rvert,$ hence $\E(\langle T, \phi_1\rangle^{2k})\ge(2k-1)!! \langle T, x\rangle^{2k}.$ This proves equation \eqref{eq:real_lower_bound_avg} for $p=1$.\\

\noindent{\it Induction.}  Assume now that equation \eqref{eq:real_lower_bound_avg} is correct for a given $p\ge 1$. We claim it implies that it is true for $p+1$. To prove this, consider $T\in \bigotimes_{i=1}^{p+1}\R^{d_i}$ and define $T_{(\phi_1)}\in \bigotimes_{i=2}^{p+1}\R^{d_i}$ the tensor contracted on its first slot, that is, $T_{(\phi_1)}=[\phi_1^t\otimes (\bigotimes_{i=2}^{p+1}\mathbbm{1}_{d_i})] T,$ or component-wise $(T_{(\phi_1)})_{i_2,\ldots,i_{p+1}}=\sum_{i_1=1}^{d_1}T_{i_1,i_2,\ldots,i_{p+1}}(\phi_1)_{i_1}$. In order to shorten notations, denote the random variables $\Phi=\phi_2\otimes\ldots \otimes \phi_{p+1}$ and $z=\phi_1\otimes \Phi$, so that in particular $\langle T,\phi_1\otimes \ldots \otimes \phi_{p+1}\rangle =\langle T, z\rangle.$ Then notice that the conditional expectation given $\phi_1$ satisfies
\begin{equation*}
\E(\langle T, z\rangle^{2k}|\phi_1)=\E(\langle T_{(\phi_1)},\Phi\rangle^{2k}|\phi_1)\ge ((2k-1)!!)^p\langle T_{(\phi_1)},x_2\otimes \ldots \otimes x_{p+1}\rangle^{2k},
\end{equation*}
where for the last inequality we use the induction assumption. Then
\begin{equation}\label{eq:conditioning-trick}
\E(\langle T, z\rangle^{2k})=\E_{\phi_1}[\E(\langle T, z\rangle^{2k}|\phi_1)]\ge \E_{\phi_1}\left(((2k-1)!!)^p\langle T_{(\phi_1)},x_2\otimes \ldots \otimes x_{p+1}\rangle^{2k}\right).
\end{equation}
Now define\footnote{In order to keep notations readable we do not display the dependence in the unit norm vectors $x_2,\ldots,x_p$ of $\Theta$, however the reader should keep this implicit dependence in mind.} $\Theta\in \R^{d_1}$ the vector obtained from $T$ by contracting its last $p$ slots with vectors $x_2,\ldots, x_{p+1}$. More formally, $\Theta=[\mathbbm{1}_{d_1}\otimes x_2^t\otimes\ldots\otimes x_{p+1}^t] T$, or again, component-wise, $\Theta_{i_1}=\sum_{i_2=1}^{d_2}\ldots\sum_{i_{p+1}=1}^{d_{p+1}}T_{i_1,i_2,\ldots i_{p+1}}(x_2)_{i_2}\ldots (x_{p+1})_{i_{p+1}}.$ Then,
\begin{equation}\label{eq:reduction-to-base-case}
\E_{\phi_1}\left(\langle T_{(\phi_1)},x_2\otimes \ldots \otimes x_{p+1}\rangle^{2k}\right)=\E_{\phi_1}(\langle \Theta,\phi_1\rangle^{2k})\ge (2k-1)!!\langle T,x_1\otimes \ldots\otimes x_{p+1}\rangle^{2k}
\end{equation}
where the last inequality follows from the $p=1$ case and the remark that $\langle \Theta, x_1\rangle =\langle T,x_1\otimes \ldots\otimes x_{p+1}\rangle$ by definition of $\Theta$. Bringing together equations \eqref{eq:conditioning-trick} and \eqref{eq:reduction-to-base-case} leads to the claim.
\end{proof}
We use the above lemma for the proof of inequality \eqref{eq:real_inj_norm_bounded}:

\begin{proof}[Proof of real inequality \eqref{eq:real_inj_norm_bounded}.]
As in the complex case, we use that $u_j\overset{\text{law}}{=}\frac{\phi_j}{\lnorm \phi_j\rnorm_2},$ where the $\phi_j$ are independent standard real Gaussian vectors $\mathcal{N}_\R(0,\mathbbm{1}_{d_j})$, to obtain
\begin{equation}
\label{eqn:uniform-to-gaussian-real-case}
    \E_u(\langle T, u_1\otimes \ldots \otimes u_p\rangle^{2k})=\frac1{\prod_{i=1}^p\E_{\phi_i}(\lnorm \phi_i \rnorm_2^{2k})}\E_{\phi}(\langle T,\phi_1\otimes\ldots \phi_p\rangle^{2k}).
\end{equation}
For any choice of unit norm $x_{i}\in \bSR^{d_i-1}, i\in [p]$, applying Lemma \ref{lem:gaussian_deterministic_bound} leads to
$$\E_u(\langle T, u_1\otimes \ldots \otimes u_p\rangle^{2k})\ge\frac{((2k-1)!!)^p}{\prod_{i=1}^p\E_{\phi_i}(\lnorm \phi_i \rnorm_2^{2k})}\langle T,x_1\otimes\ldots x_p\rangle^{2k},$$
which after maximizing the right-hand side over the choices of $x_i$ results in
$$\E_u(\langle T, u_1\otimes \ldots \otimes u_p\rangle^{2k})\ge\frac{((2k-1)!!)^p}{\prod_{i=1}^p\E_{\phi_i}(\lnorm \phi_i \rnorm_2^{2k})}\injnormR{T}^{2k}.$$
Therefore, we obtain
$$\frac{((2k-1)!!)^p}{\prod_{i=1}^p\E_{\phi_i}(\lnorm \phi_i \rnorm_2^{2k})}\injnormR{T}^{2k}\le \E_u(\langle T, u_1\otimes \ldots \otimes u_p\rangle^{2k}).$$
Noticing that $\lnorm \phi_i\rnorm^2_2$ is a standard $\chi^2$ random variable with $d_i$ degrees of freedom, we know its moments are $\E(\lnorm \phi_i \rnorm_2^{2k})=2^k\Gamma\left(\frac{d_i}{2}+k\right)/\Gamma\left(\frac{d_i}{2}\right)$. Rearranging the previous display completes the proof.
\end{proof}

\begin{remark}
\label{rem:brauer}
    There is a proof of the complex claim \eqref{eq:symmetric_inj_norm_bounded} similar to the proof of the real claim. The details are a bit more involved due to the necessity of tracking complex conjugations arising in various Hermitian products. However, we did not find a proof of the real case mimicking the ideas of the complex case proof. In fact, $P_{\text{sym},k}^{(d_i)}$ is replaced by normalized sums over Brauer elements, which do not have the projector property. Although it is possible to obtain bounds on the injective norm from the sums over Brauer elements, we found those bounds are not as tight as the one obtained from the inductive proof. 
\end{remark}

\begin{remark}
    It is natural to wonder whether a bound like \eqref{eq:symmetric_inj_norm_bounded} holds in the real case. However, our proof of \eqref{eq:symmetric_inj_norm_bounded} mimics that of the complex case, which does not seem to naturally generalize to the real case (see Remark \ref{rem:brauer}); and our proof of the real bound \eqref{eq:real_inj_norm_bounded} depends strongly on the $u_i$'s being different, so that we can induct on them. Of course, since the bound \eqref{eq:real_inj_norm_bounded} applies to all real tensors, we can in particular apply it to real symmetric tensors. However, the resulting bound does not seem to be tight for our models.
    Even putting proofs aside, it is not clear what the correct bound would be. From pattern-matching in \eqref{eq:real_inj_norm_bounded}, \eqref{eq:inj_norm_bounded}, and \eqref{eq:symmetric_inj_norm_bounded}, one might guess that real-symmetric tensors $B \in \Sym_p(\R^d)$ satisfy the bound
    \begin{equation}
    \label{eqn:real-sym-guess}
        \injnormR{B}^{2k} \overset{?}{\leq} \frac{2^{pk}\Gamma(\frac{d}{2}+pk)}{(2pk-1)!!\Gamma(\frac{d}{2})} \E_u\left( \ip{B,u^{\otimes p}}^{2k} \right).
    \end{equation}
    However, this bound is not correct, even in the matrix case $p = 2$; if $B = \diag(b_1,\ldots,b_d)$ is a diagonal matrix, then $\ip{B,u^{\otimes p}} = \sum_{i=1}^d b_i u_i^2$, so that (from explicit moments of entries of uniform unit vectors) \eqref{eqn:real-sym-guess} becomes
    \[
        \max_{i=1}^d \abs{b_i} \overset{?}{\leq} \frac{2(d-1)}{3d} \sum_{i=1}^d b_i^2 + \frac{d+2}{3d} \left(\sum_{i=1}^d b_i \right)^2.
    \]
    This fails if, for example, $d = 2$, $b_1 = 1$, and $b_2 = -1/6$.
\end{remark}

\begin{remark}\label{rem:bounds-to-invariants}
    These bounds can be expressed as sums of local unitary invariant polynomials of tensors. In fact, in the non-symmetric case, from equation \eqref{eq:uniform-to-Gaussian}, the averages reduce to $$\E_{u}(\lvert \langle T,u_1\otimes\ldots\otimes u_p\rangle\rvert^{2k})=\frac1{\E_{\phi}(\lnorm \phi \rnorm_{2}^{2k})^p}\sum_{\sigma_1,\ldots,\sigma_p\in \mathfrak{S}_k}\langle T^{\otimes k}\vert \sigma_1\otimes \ldots\otimes \sigma_p\vert T^{\otimes k}\rangle,$$
     where $\phi$ is a standard normal complex $d$-dimensional random vector. For each choice of $\sigma_1,\ldots,\sigma_p\in\mathfrak{S}_k$,  $\langle T^{\otimes k}\vert \sigma_1\otimes \ldots\otimes \sigma_p\vert T^{\otimes k}\rangle$ is recognized to be the LU unitary invariant of $T$ constructed from $\sigma_1,\ldots,\sigma_p$ as has been described several times in the literature in \textit{e.g.} \cite[section 6, eq. (224)]{bonzom2018blobbed}, \cite[section 2.2]{collins2024tensor}, whose corresponding colored graph is the bipartite multi-graph $G_{\sigma_1,\ldots,\sigma_p}=(V=V_\circ\sqcup V_\bullet,E=\bigsqcup_{i=1}^pE_i)$, with white vertices $V_\circ=\{1_\circ,\ldots,k_\circ\},$ black vertices $V_\bullet=\{1_\bullet,\ldots,k_\bullet\}$ and edge multiset $E$ partitioned into $i$-colored edge sets $E_i,\ i=1\ldots,p$, $E_i=\{(v_\circ,\sigma_i(v)_\bullet):v_\circ\in V_\circ\}$. 
     In the symmetric case, similar considerations lead us to similar conclusions. The invariance is now under the action of $U\in U(d),$  $T\mapsto U^{\otimes p}T$.  The corresponding invariants have not been formally described in the literature. However, it is a simple exercise to show that those are generated by similar graphs contractions of tensors, with the corresponding graphs now being bipartite regular $p$-valent multi-graphs, without edge coloration. All the content of this remark easily generalizes to the real case, in which case the corresponding graphs need not be bipartite.
\end{remark}

\begin{remark}
\label{rem:latala}
We remark that, in the real asymmetric case, bounds retaining the spirit of Lemma \ref{lem:gaussian_deterministic_bound}  appeared in Lata\l a's well-known work \cite{Lat2006}, albeit with very different proofs. Lemma \ref{lem:gaussian_deterministic_bound} gives 
\begin{equation}
\label{eqn:ours-compared-to-latalas}
    \injnormR{T} \leq \left( ((2k-1)!!)^{\frac{1}{k}} \right)^{-\frac{p}{2}} \| \ip{T,\varphi_1 \otimes \cdots \otimes \varphi_p} \|_{2k},
\end{equation}
where $\|X\|_{2k} = \E[X^{2k}]^{1/(2k)}$ is the $L^{2k}$ norm of a random variable $X$. Since $(2k-1)!! = \frac{(2k)!}{2^kk!}$, the standard bounds $\sqrt{2\pi n}(n/e)^n \exp(\frac{1}{12n+1}) \leq n! \leq \sqrt{2\pi n}(n/e)^n \exp(\frac{1}{12n})$ yield that 
\[
    ((2k-1)!!)^{\frac{1}{k}} \geq \frac{2k}{e} \exp\left(\frac{1}{k} \left( \frac{\log 2}{2} + \frac{1}{24k+1} - \frac{1}{12k}\right) \right) \geq \frac{2k}{e},
\]
so that \eqref{eqn:ours-compared-to-latalas} gives 
\begin{equation}
\label{eqn:worse-ours-compared-to-latalas}
    \injnormR{T} \leq e^{p/2} (2k)^{-\frac{p}{2}} \| \ip{T,\varphi_1 \otimes \cdots \otimes \varphi_p} \|_{2k}.
\end{equation}
Stirling's $\lim_{k \to \infty} \frac{((2k-1)!!)^{\frac{1}{k}}}{k} = \frac{2}{e}$, so that \eqref{eqn:worse-ours-compared-to-latalas} is not particularly lossy for large $k$.

Comparatively, Theorem 1 of \cite{Lat2006} gives two-sided bounds on $\|\ip{T,\varphi_1 \otimes \cdots \otimes \varphi_p}\|_{2k}$ in terms of weighted sums of norms of flattenings of the deterministic tensor $T$. Since $\injnormR{T}$ corresponds to $\|T\|_{\{1\},\ldots,\{d\}}$ in Lata\l a's notation, his Theorem 1 gives the existence of some constants $C_p$ such that
\[
    (2k)^{p/2} \injnormR{T} \leq m_{2k}(T) \leq C_p \|\ip{T,\varphi_1 \otimes \cdots \otimes \varphi_p}\|_{2k}
\]
where $m_{2k}(T)$ is defined in \cite{Lat2006}, and thus
\begin{equation}
\label{eqn:latalas-compared-to-ours}
    \injnormR{T} \leq C_p (2k)^{-p/2} \|\ip{T,\varphi_1 \otimes \cdots \otimes \varphi_p}\|_{2k}.
\end{equation}
Comparing \eqref{eqn:worse-ours-compared-to-latalas} and \eqref{eqn:latalas-compared-to-ours}, one sees that our result is akin to giving an explicit formula for $C_p$ in this consequence of Lata\l a's result. (Since we have dropped the other flattenings of $T$, our does not give an explicit formula for the constants in Lata\l a's Theorem 1.) One of the primary interests of our paper is that the constants we obtain are conjecturally tight for random tensors; this will be discussed at length below.
\end{remark}

\begin{remark}
The rest of this paper will discuss the quality of these bounds for $p \geq 2$; here, we just remark that in the simple case $p = 1$ of vectors, the bounds of Theorem \ref{thm:deterministic_bound} are actually equalities for every deterministic $T$ and every $k$. The basic argument is that the injective norm of such a vector is its $2$-norm; one can first rewrite $\E[|\ip{T,u}|^{2k}]$ by relating the uniform vector $u$ to a Gaussian vector $\varphi$ of the correct symmetry class; then, from rotational symmetry of the Gaussian, $\ip{T,\varphi}$ is itself a Gaussian variable, so all moments can be computed. In the real case, one can do this concretely by combining \eqref{eqn:injective-norm-of-vectors} and \eqref{eqn:uniform-to-gaussian-real-case}. In the complex case, one just replaces real Gaussian vectors with complex Gaussian vectors; for example, \eqref{eqn:injective-norm-of-vectors} is replaced with $\E[|\ip{T,\varphi_1}|^{2k}] = k! \|T\|_2^{2k}$, where $\varphi_1$ is a standard complex Gaussian, which has moments $\E[\|\varphi_1\|_2^{2k}] = \Gamma(d+k)/\Gamma(d)$. 
\end{remark}


\section{Application: Asymmetric tensors} 
\label{sec:application_asymmetric}

We let $\psi_p^\K:\R_+\times (\R_+)^{p-1}\rightarrow \R$ be the function:
\begin{equation}
    \psi_p^\K(\alpha;\eta_2,\ldots,\eta_p):=\begin{cases}\frac1{\sqrt{e}(\eta_2\ldots\eta_p)^{\frac1{2p}}}\left(\frac{(1+\alpha)^{1+\alpha}}{\alpha^{1+\alpha}}\right)^{1/2} \left( \prod_{i=2}^p\frac{(1+\alpha \eta_i)^{1+\alpha \eta_i}}{(\alpha \eta_i)^{\alpha \eta_i}}\right)^{1/2}, \text{ if } \K=\C, \\
    \frac{1}{\sqrt{e}(\eta_2\ldots \eta_p)^{\frac1{2p}}}\left( \frac{(1+\alpha/2)^{1+\alpha/2}}{(\alpha/2)^{1+\alpha/2}}\right)^{1/2}\left(\prod_{i=2}^p\frac{(1+\alpha\eta_i/2)^{1+\alpha\eta_i/2}}{(\alpha\eta_i/2)^{\alpha\eta_i/2}}\right)^{1/2}, \text{ if } \K=\R.
    \end{cases}
\end{equation}
As a short version of the above notation we denote $\psi_p^\K(\alpha)=\psi_p^\K(\alpha;1,\ldots,1).$ Note that 
\[
    \psi_p^\C(\alpha;\eta_2,\ldots,\eta_p)=\psi_p^\R(2\alpha;\eta_2,\ldots,\eta_p).
\]
\begin{theorem}
\label{thm:asymmetric}
If $T$ comes from \hyperlink{model:ak}{Model $A_\K$}, then we have the following upper bounds for its injective norm in various asymptotic regimes:
\begin{itemize}
\item Suppose that $d_1 = \cdots = d_p = d$ is fixed. Then as $p \to \infty$ we have
\begin{equation}
    \limsup_{p \to \infty} \frac{1}{\sqrt{p \log p}} \E[\injnormK{T}] \leq \sqrt{\frac{d-1}{d}},
\end{equation}
more precisely with the following rate: For all $\epsilon > 0$ there exists $p_0$ such that for all $p \geq p_0$ we have
\begin{equation}
\label{eqn:AR-p-to-infinity-rate}
    \frac{1}{\sqrt{p \log p}} \E[\injnormK{T}] \leq \sqrt{\frac{d-1}{d}}\left(1 + \left( \frac{1}{2}+\epsilon \right) \frac{\log \log p}{\log p} \right).
\end{equation}
\item Suppose $p\ge 2$ is fixed, and $d_1,\ldots,d_p \to \infty$ in such a way that $d_i = \lfloor \eta_i d_1 \rfloor$ for some fixed $\eta_2,\ldots,\eta_p > 0$. Then
    \begin{equation}\label{eq:multivariate_bound}
        \limsup_{d_1\to \infty}\E[\injnormK{T}] \le \inf_{\alpha\in \R_+} \psi_p^\K(\alpha;\eta_2,\ldots,\eta_p).
    \end{equation}
    In particular, for $d_1=\ldots=d_p$ all sent to infinity, one has $\limsup_{d_1\to \infty}\E\injnormK{T}\le \inf_{\alpha\in \R_+^*}\psi_p^\K(\alpha),$ and the infimum is realized uniquely at the unique positive solution $\alpha^\K_0(p)$ to 
    \begin{equation}
        \begin{cases}
            \alpha_0^\C(p) \log(1+1/\alpha_0^\C(p))=1/p, \text{ if } \K=\C,\\
            \alpha_0^\R(p) \log(1+2/\alpha_0^\R(p))=2/p, \text{ if } \K=\R,
        \end{cases}
    \end{equation}
    which (clearly) satisfy $\alpha_0^\R(p) = 2\alpha_0^\C(p)$.
    Moreover, 
    \begin{equation}
    \label{eqn:inj-norm-limit-d-then-p}
        \limsup_{d\to\infty}\E[\injnormK{T}] \le\psi_p^\K(\alpha_0^\K(p))\le\sqrt{p\log p}(1+\oo_{p\to\infty}(1)).
    \end{equation}
\end{itemize}
\end{theorem}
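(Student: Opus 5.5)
The plan is to combine Theorem~\ref{thm:deterministic_bound} with Jensen's inequality and a comparison of moments with the Gaussian case. Then we optimize over the moment order $k$, which will grow proportionally to $d_1$ in the second regime and like $p\log p$ in the first.

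\textbf{Step 1: reduction to a single linear form.} For any $k$,
\[
\E\injnormK{T}\le \big(\E\injnormK{T}^{2k}\big)^{1/2k}.
\]
We apply Theorem~\ref{thm:deterministic_bound} (part a or b) and use Fubini to swap $\E_T$ and $\E_u$. This reduces everything to bounding $\E_T|\langle T,u_1\otimes\cdots\otimes u_p\rangle|^{2k}$ for a \emph{fixed} family of unit vectors. Set $a=\overline{u_1\otimes\cdots\otimes u_p}$, which has $\|a\|_2=1$, and write $\sigma^2=(d_1\cdots d_p)^{-1/p}$. The form is then $\sum_I a_I T_I$ with i.i.d.\ rigidly sub-Gaussian $T_I$.

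\textbf{Step 2: Gaussian domination.} We expand the $2k$-th moment multinomially.
\begin{itemize}
\item In the real case, symmetry kills every term containing an odd power of some $T_I$. The surviving terms carry nonnegative coefficients times $\prod_I a_I^{2m_I}\,\E T_I^{2m_I}$, and each moment is at most its Gaussian counterpart.
\item In the complex case, circular symmetry keeps only the balanced terms $\prod_I|a_I|^{2m_I}\,\E|T_I|^{2m_I}$, again with nonnegative coefficients.
\end{itemize}
Hence the moment is at most the Gaussian one, $(2k-1)!!\,\sigma^{2k}$ (real) or $k!\,\sigma^{2k}$ (complex), uniformly in $u$. This gives
\[
\E\injnormC{T}\le \sigma\Big(k!\prod_{i=1}^p\tbinom{d_i+k-1}{k}\Big)^{1/2k},\qquad
\E\injnormR{T}\le \sigma\Big(\frac{(2k-1)!!}{((2k-1)!!)^{p}}\prod_{i=1}^p\frac{2^k\Gamma(\frac{d_i}{2}+k)}{\Gamma(\frac{d_i}{2})}\Big)^{1/2k}.
\]
These are nonasymptotic bounds, valid for every $k$.

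\textbf{Step 3: the regime $d_1\to\infty$ with $p$ fixed.} Take $k=\lfloor\alpha d_1\rfloor$. By Stirling,
\[
\tbinom{\eta_i d_1+k}{k}^{1/2k}\to\Big(\frac{(\eta_i+\alpha)^{\eta_i+\alpha}}{\eta_i^{\eta_i}\alpha^{\alpha}}\Big)^{1/(2\alpha)},
\qquad (k!)^{1/2k}\sim\sqrt{\alpha d_1/e},
\qquad \sigma=(d_1^p\eta_2\cdots\eta_p)^{-1/2p}.
\]
The powers of $d_1$ cancel. After rescaling $\alpha\eta_i$ this should reorganize exactly into $\psi^\C_p(\alpha;\eta)$; the real case is the same computation with $\alpha/2$. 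Taking the infimum over $\alpha$ gives \eqref{eq:multivariate_bound}.

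For $\eta_i\equiv1$, uniqueness of the minimizer follows by differentiating $\log\psi_p^\K$. The first-order condition is $\alpha\log(1+1/\alpha)=1/p$, and the left side is strictly increasing in $\alpha$, which gives a unique root. For \eqref{eqn:inj-norm-limit-d-then-p}, note that $\alpha_0\sim 1/(p\log p)$; substituting this into $\psi_p$ gives $\sqrt{p\log p}\,(1+\oo(1))$.

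\textbf{Step 4: the regime $d$ fixed, $p\to\infty$.} Now $\binom{d+k-1}{k}\le k^{d-1}(1+\oo(1))/(d-1)!$ and $\sigma=d^{-1/2}$. The complex bound becomes
\[
\sqrt{k/(ed)}\;\exp\Big(\tfrac{p(d-1)\log k}{2k}\Big)\,(1+\oo(1)).
\]
With $k=\beta p\log p$ this is
\[
\sqrt{p\log p}\,\sqrt{\beta/(ed)}\;e^{(d-1)/(2\beta)}\,(1+\oo(1)),
\]
which is minimized at $\beta=d-1$, with value $\sqrt{(d-1)/d}$. The rate \eqref{eqn:AR-p-to-infinity-rate} comes from keeping the correction $\log k=\log p+\log\log p+\OO(1)$ in the exponent, which contributes the factor $1+\tfrac12\frac{\log\log p}{\log p}$, and absorbing the $\OO(1/\log p)$ terms into $\epsilon$. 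The real case is identical using $\Gamma(\frac d2+k)/\Gamma(\frac d2)\sim k^{d/2}\Gamma(k)/\Gamma(\frac d2)$ together with the $(2k-1)!!$ factors.

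\textbf{Main obstacle.} I expect the main difficulty to be the bookkeeping in Steps 3--4. One must check that the Stirling asymptotics match $\psi_p^\K$ exactly, including the $\eta_i$-normalization and the treatment of $d_1$ as opposed to $d_i$. One must also obtain the second-order $\log\log p$ term uniformly, without losing it in the error terms. Step 2 is conceptually simple, but the complex case requires some care in the multinomial expansion so that only balanced monomials survive.
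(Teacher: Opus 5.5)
Your proposal is correct and follows the paper's proof essentially step by step. The paper does the same things: it dominates the $2k$-th moment of a fixed rank-one projection by the Gaussian one (the paper expands over set partitions rather than multinomially, which is the same bookkeeping), combines this with Theorem~\ref{thm:deterministic_bound} and Jensen, takes $k\asymp d_1$ or $k\asymp p\log p$, and optimizes.

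One small correction: your choice $k=\lfloor \alpha d_1\rfloor$ is the reciprocal of the paper's parametrization. Your limit is therefore $\psi_p^\K(1/\alpha;\eta_2,\ldots,\eta_p)$ rather than $\psi_p^\K(\alpha;\eta_2,\ldots,\eta_p)$. This leaves the infimum over $\alpha>0$ unchanged.
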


\begin{remark}
\label{rem:kac--rice}
The \emph{Gaussian} special cases of equations \eqref{eqn:AR-p-to-infinity-rate} and \eqref{eqn:inj-norm-limit-d-then-p} were essentially previously known, from our prior work \cite{dartois2024injective}, even with essentially the same rate $\frac{\log\log p}{2\log p}$. (The prior work was based on very different arguments, namely spin-glass methods and the Kac--Rice formula; see Appendix \ref{app:prior-bounds-comparison} for more discussion of the methods. The prior work also gave high-probability bounds, rather than bounds in expectation, but the bounds were the same.) However, the non-Gaussian cases are new to the best of our knowledge.

In the \emph{real Gaussian} case, $p$ fixed, a matching lower bound to \eqref{eqn:inj-norm-limit-d-then-p} is known from \cite{bates2025balanced} (albeit with high probability, rather than in expectation). However, in the \emph{complex Gaussian} case, we are not aware of any matching lower bound to \eqref{eqn:inj-norm-limit-d-then-p}.
\end{remark}
The proof relies on the next lemma, which permits one to bound the average injective norm of $\K$-rigidly sub-Gaussian tensors using their Gaussian counterpart.
\begin{lemma}\label{lem:rigid-subgaussian-moment-bound}
Let $\{u_i\in \K^{d_i}\}_{i=1}^p$ be a family of deterministic unit-norm vectors, \textit{i.e.} $\lnorm u_i\rnorm_2=1$. Then, if $T\in \bigotimes_{i=1}^p\K^{d_i}$ comes from \hyperlink{model:ak}{Model $A_\K$}, we have
\begin{equation}\label{eqn:rigid-subgaussian-moment-bound}
        \E_{T}(|\ip{T,u_1 \otimes \cdots \otimes u_p}|^{2k}) \leq \delta_{\K,\C}\frac{k!}{(d_1\cdots d_p)^{k/p}}+\delta_{\K,\R}\frac{(2k-1)!!}{(d_1\ldots d_p)^{k/p}}.
    \end{equation}
\end{lemma}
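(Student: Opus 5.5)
We need to show that $X=\ip{T,u_1\otimes\cdots\otimes u_p}=\sum_I \overline{T_I}\,w_I$, with $w=u_1\otimes\cdots\otimes u_p$, satisfies $\E|X|^{2k}\le C_k\,\sigma^{2k}$. Here $\sigma^2=(d_1\cdots d_p)^{-1/p}$ is the entry variance, $\|w\|_2=1$, and $C_k=k!$ (complex) or $(2k-1)!!$ (real). The right-hand side is exactly the $2k$-th moment of $\sum_I \overline{G_I}w_I$ with $G$ i.i.d.\ Gaussian of the same symmetry class and variance $\sigma^2$. Indeed, that sum is Gaussian with variance $\sigma^2\|w\|_2^2=\sigma^2$, which gives $(2k-1)!!\,\sigma^{2k}$ in the real case and $k!\,\sigma^{2k}$ in the complex circular case. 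So the plan is a moment comparison: expand the moments of both sums and compare them term by term.

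\emph{Real case.} Expand $\E X^{2k}=\sum_{I_1,\ldots,I_{2k}} \prod_j w_{I_j}\,\E\prod_j T_{I_j}$. By independence and symmetry, $\E\prod_j T_{I_j}=\prod_I \E[T_I^{m_I}]$, where $m_I$ is the multiplicity of $I$. This vanishes unless every $m_I$ is even, and then it is bounded by $\prod_I (m_I-1)!!\,\sigma^{m_I}$, which is exactly the Gaussian value $\prod_I\E[G_I^{m_I}]$. The weights $\prod_j w_{I_j}=\prod_I w_I^{m_I}$ are nonnegative whenever all $m_I$ are even, since $w$ is real. Hence each term is dominated by the corresponding Gaussian term, and summing gives $\E X^{2k}\le \E(\sum_I G_I w_I)^{2k}=(2k-1)!!\,\sigma^{2k}$.

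\emph{Complex case.} Write $|X|^{2k}=X^k\overline{X}^k$ and expand as a sum over $I_1,\ldots,I_k,J_1,\ldots,J_k$ of $\prod_j \overline{T_{I_j}}w_{I_j}\,T_{J_j}\overline{w_{J_j}}$. Circular symmetry gives $\E[\overline{Y}^aY^b]=0$ unless $a=b$, and $\E|Y|^{2a}$ otherwise. So, after grouping by index, a term survives only if each index $I$ appears equally often (say $a_I$ times) among the $I_j$'s and among the $J_j$'s. Its expectation is then $\prod_I\E|T_I|^{2a_I}\le \prod_I a_I!\,\sigma^{2a_I}$, which is the complex Gaussian value. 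The main obstacle is that the weights $\prod_j w_{I_j}\overline{w_{J_j}}$ are not individually nonnegative. However, for surviving terms the multiset $\{J_j\}$ equals $\{I_j\}$, so the weight equals $\prod_I |w_I|^{2a_I}\ge 0$. Term-by-term domination therefore still holds.

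Summing, $\E|X|^{2k}\le \E|\sum_I\overline{G_I}w_I|^{2k}=k!\,\sigma^{2k}$, where $\sum_I\overline{G_I}w_I$ is circular complex Gaussian with $\E|\cdot|^2=\sigma^2$, and $|Z|^2$ is exponential with $\E|Z|^{2k}=k!\,\sigma^{2k}$. This is the claimed bound \eqref{eqn:rigid-subgaussian-moment-bound}.
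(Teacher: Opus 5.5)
Your proposal is correct and is essentially the paper's proof. You expand the $2k$-th moment, use independence together with symmetry (resp.\ circular symmetry) to keep only configurations in which each entry appears an even number of times (resp.\ equally often conjugated and unconjugated), bound each entry moment by the Gaussian one, and resum into $(2k-1)!!\,\sigma^{2k}$ or $k!\,\sigma^{2k}$. The differences are only bookkeeping: the paper indexes the surviving terms by even (resp.\ balanced) set partitions with distinct indices per block rather than by index multiplicities, and you state explicitly the nonnegativity of the weights $\prod_I w_I^{m_I}$ and $\prod_I |w_I|^{2a_I}$, which the paper uses only implicitly.
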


\begin{proof}We split the proof in two cases, $\K=\R$ or $\K=\C$.\\

\noindent{\it Real case:} \\
 Let $W\in \bigotimes_{i=1}^p \R^{d_i}$ be a real random tensor with i.i.d. $\mathcal{N}_\R\left(0,\frac1{(d_1\ldots d_p)^{1/p}}\right)$ entries.
We have 
$$\E_W(\langle W,u_1\otimes \ldots\otimes u_p\rangle^{2k})=\frac{(2k-1)!!}{(d_1\ldots d_p)^{k/p}}\left(\prod_{i=1}^p\langle u_i,u_i\rangle^{k}\right)=\frac{(2k-1)!!}{(d_1\ldots d_p)^{k/p}}.$$ 
Assume now that $T\in \bigotimes_{i=1}^p \R^{d_i}$ is a real random tensor of \hyperlink{model:ak}{Model $A_\R$}. One has 
\begin{equation*}
 \E_{T}(\langle T,u_1\otimes \ldots\otimes u_p\rangle^{2k})=\sum_{I_1,\ldots, I_{2k}\in \prod_{i=1}^p[d_i]}\left(\prod_{s=1}^{2k}(u_1\otimes \ldots \otimes u_p)_{I_s}\right)\E_{T}\left( \prod_{s=1}^{2k}T_{I_s}\right),
\end{equation*}
where we shorten notations by introducing multi-indices $I_s=(i_1^{(s)},\ldots, i_p^{(s)})\in \prod_{i=1}^p[d_i].$ In order to manipulate the above sum, let us introduce $\mathcal{P}_{\text{even}}([2k])$ the set of even partitions of the set $[2k]$, where by even we mean that the blocks $b$ of any partition $\pi \in \mathcal{P}_{\text{even}}([2k])$ must be of even size $\lvert b\rvert\in 2\mathbb{N}^*.$ Using independence and symmetry of the entries of $T$ we reach
\begin{multline*}
    \sum_{I_1,\ldots, I_{2k}\in \prod_{i=1}^p[d_i]}\left(\prod_{s=1}^{2k}(u_1\otimes \ldots \otimes u_p)_{I_s}\right)\E_{T}\left( \prod_{s=1}^{2k}T_{I_s}\right)=\\
    \sum_{\pi \in \mathcal{P}_{\text{even}([2k])}}\sum_{\substack{I_b: b\in \pi\\I_b\neq I_{b'}, b\neq b'}}\left(\prod_{b\in \pi} (u_1\otimes \ldots \otimes u_p)^{\lvert b \rvert}_{I_b}\right)\prod_{b\in \pi}\E_T\left(T^{\lvert b \rvert}_{I_b}\right).
\end{multline*}
Our $\R$-rigid sub-Gaussianity assumption here reads $\E_T\left(T^{\lvert b \rvert}_{I_b}\right)\le \E_W\left( W_{I_b}^{\lvert b\rvert} \right)$, which implies
\begin{multline*}
     \E_{T}(\langle T,u_1\otimes \ldots\otimes u_p\rangle^{2k})\le \sum_{\pi \in \mathcal{P}_{\text{even}([2k])}}\sum_{\substack{I_b: b\in \pi\\I_b\neq I_{b'}}}\left(\prod_{b\in \pi} (u_1\otimes \ldots \otimes u_p)^{\lvert b \rvert}_{I_b}\right)\prod_{b\in \pi}\E_W\left(W^{\lvert b \rvert}_{I_b}\right)\\
     =\E_{W}\left( \langle W, u_1\otimes \ldots \otimes u_p\rangle^{2k}\right).
\end{multline*}
This proves the real claim.\\

\noindent{\it Complex case:}\\
  The structure of the argument is similar. We here record the small changes needed. We repeat earlier steps by introducing $Z\in \bigotimes_{i=1}^p \C^{d_i}$ a random tensor with i.i.d. $\mathcal{N}_\C\left(0,\frac1{(d_1\ldots d_p)^{1/p}}\right)$ entries. 
     For any choice of complex unit norm vectors $u_1,\ldots,u_p$ we have 
    $$\E_Z\left( \lvert \langle Z,u_1\otimes \ldots\otimes u_p\rangle\rvert^{2k}\right)=\frac{k!}{(d_1\ldots d_p)^{k/p}}\left( \prod_{i=1}^p\lvert \langle u_i,u_i\rangle\rvert^{k}\right)=\frac{k!}{(d_1\ldots d_p)^{k/p}}.$$ Assume now that $T$ is a random tensor satisfying the assumptions of \hyperlink{model:ak}{Model $A_\C$}. One has
\begin{multline*}
\E_{T}\left( \lvert \langle T,u_1\otimes\ldots \otimes u_p\rangle\right\rvert^{2k})=\\
\sum_{\substack{I_1,\ldots,I_k\in \prod_{i=1}^p[d_i]\\J_1,\ldots,J_k\in \prod_{i=1}^p[d_i]} }\left(\prod_{s=1}^k(u_1\otimes\ldots \otimes u_p)_{I_s}(\bar{u}_1\otimes\ldots \otimes \bar{u}_p)_{J_s}\right)\E_T\left( \prod_{s=1}^kT_{I_s} \overline{T}_{J_s}\right),
\end{multline*}
where we shorten notations by using multi-indices $I_s,J_s=(i_1^{(s)},\ldots,i_p^{(s)}),(j_1^{(s)},\ldots,j_p^{(s)})\in [d_1]\times \ldots \times [d_p].$ We split $\E_T\left( \prod_{s=1}^kT_{I_s} \overline{T}_{J_s}\right)$ over independent components. To this aim let us introduce $\mathcal{P}([k]\sqcup[\bar k])$ the set of partitions of the set $[k]\sqcup[\bar k]:=\{1,\ldots,k\}\sqcup \{\bar 1,\ldots, \bar k\}$. The set $[\bar k]$ is just a copy of $[k]$ but with different symbols denoting its elements. We introduce the set of balanced partitions $\mathcal{P}_{\text{bal}}([k]\sqcup[\bar k])\subset\mathcal{P}([k]\sqcup[\bar k])$ as the set of partitions $\pi$ of $[k]\sqcup[\bar k]$  whose blocks $b\in \pi$ are such that $\lvert b\cap [k]\rvert=\lvert b\cap [\bar k]\rvert$ (so that in particular $\lvert b \rvert=2q$ for some $q\in \N$).  We can now further write
\begin{multline*}
    \E_{T}\left( \lvert \langle T,u_1\otimes\ldots \otimes u_p\rangle\right\rvert^{2k})=\\
    \sum_{\pi \in \mathcal{P}_{\text bal}([k]\sqcup[\bar k])}
    \sum_{\substack{I_b\in \prod_{i=1}^p[d_i]^p: b\in \pi\\ I_{b}\neq I_{b'}, b\neq b'}}\left(\prod_{b\in \pi}\lvert(u_1\otimes\ldots \otimes u_p)_{I_b}\rvert^{\lvert b \lvert}\right)
    \prod_{b\in \pi}\E_{T}\left(\lvert T_{I_b}\rvert^{\lvert b\rvert}\right),
\end{multline*}
which follows from independence and circular symmetry of the entries. As a consequence of $\C$-rigid sub-Gaussianity, one has $\E_{T}\left(\lvert T_{I_b}\rvert^{\lvert b\rvert}\right)\le \E_{Z}\left(\lvert Z_{I_b}\rvert^{\lvert b\rvert}\right),$ so that
\begin{multline*}
    \E_{T}\left( \lvert \langle T,u_1\otimes\ldots \otimes u_p\rangle\right\rvert^{2k})\le\\
    \sum_{\pi \in \mathcal{P}_{\text{bal}}([k]\sqcup[\bar k])}
    \sum_{\substack{I_b\in \prod_{i=1}^p[d_i]^p: b\in \pi\\ I_{b}\neq I_{b'}, b\neq b'}}\left(\prod_{b\in \pi}\lvert(u_1\otimes\ldots \otimes u_p)_{I_b}\rvert^{\lvert b \lvert}\right)
    \prod_{b\in \pi}\E_{Z}\left(\lvert Z_{I_b}\rvert^{\lvert b\rvert}\right)\\
    =\E_{Z}\left( \lvert \langle Z,u_1\otimes\ldots \otimes u_p\rangle\right\rvert^{2k}).
\end{multline*}
This proves the complex claim.
\end{proof}

\begin{proof}[Proof of Theorem \ref{thm:asymmetric}]

\noindent{\it \RomanNum{1}. $d_1,\ldots,d_p=d$ fixed and $p\to \infty.$} \\

\noindent{\it Complex case:} From equations \eqref{eq:inj_norm_bounded} and \eqref{eqn:rigid-subgaussian-moment-bound}, we have the estimate
\begin{equation}
\label{eqn:p-to-infinity-complex-calculus-overall}
    \sqrt{d}\E[\injnormC{T}] \leq \sqrt{d}\E[\injnormC{T}^{2k}]^{\frac{1}{2k}} \leq \left(k!\binom{d+k-1}{k}^p \right)^{\frac{1}{2k}}
\end{equation}
for any positive integer $k$. The basic idea is to take $k = \alpha p\log p$ for $\alpha > 0$, then use Stirling's formula to find asymptotics and optimize in $\alpha$; the optimizer turns out to be $\alpha = d-1$, and lower-order Taylor corrections give the rate \eqref{eqn:AR-p-to-infinity-rate}. Since $k$ must be an integer, we actually use $k = \lfloor \alpha p \log p \rfloor$.

In more details: A Stirling's computation gives
\begin{equation}
\label{eqn:p-to-infinity-complex-calculus-first-stirlings}
    \lim_{k \to \infty} \left(\frac{(k!)^{\frac{1}{2k}}}{\sqrt{k}} \cdot \sqrt{e} - 1\right)\frac{k}{\log k} = \frac{1}{4},
\end{equation}
so that, for any $\epsilon > 0$ and large enough $k$ (therefore, large enough $p$) we have that
\[
    (k!)^{\frac{1}{2k}} \leq \sqrt{\frac{k}{e}} \left(1 + \left(\frac{1}{4}+\epsilon\right)\frac{\log k}{k}\right).
\]
With the choice $k = \lfloor \alpha p \log p \rfloor$, we have $\frac{\log k}{k} \leq \frac{\log(\alpha p \log p)}{\alpha p \log p - 1}$. Since, for any fixed $\alpha > 0$, we have $\lim_{p \to \infty} \frac{\log(\alpha p \log p)}{\alpha p \log p - 1} \cdot \alpha p = 1$, for any $\epsilon > 0$ and $p$ large enough we have we have $\frac{\log k}{k} \leq \frac{1+\epsilon}{\alpha p}$. Combining these and absorbing the $\epsilon$, we find that for any $\alpha, \epsilon > 0$ and $p$ large enough, we have
\[
    (k!)^{\frac{1}{2k}} \leq \sqrt{\frac{\alpha p \log p}{e}}\left(1 + \frac{1/4+\epsilon}{\alpha p}\right). 
\]
If $k = \lfloor \alpha p \log p \rfloor$ and $\beta$ is any fixed positive integer, a bound using $(k+1)^\beta \leq \frac{(\beta+k)!}{k!} \leq (k+\beta)^\beta$ gives
\begin{equation}
\label{eqn:p-to-infinity-complex-calculus-second-stirlings}
    \lim_{p \to \infty} \left( \frac{\binom{\beta + k}{k}^\frac{p}{2k}}{\exp\left(\frac{\beta}{2\alpha}\right)} - 1 \right)\frac{\log p}{\log \log p} = \frac{\beta}{2\alpha}.
\end{equation}
In other words, for any $\epsilon > 0$ we have
\[
    \binom{d+k-1}{k}^{\frac{p}{2k}} \leq \exp\left(\frac{\beta}{2\alpha}\right)\left( 1 + (1+\epsilon)\frac{\beta}{2\alpha} \cdot \frac{\log \log p}{\log p}\right).
\]
Combining \eqref{eqn:p-to-infinity-complex-calculus-overall}, \eqref{eqn:p-to-infinity-complex-calculus-first-stirlings}, and \eqref{eqn:p-to-infinity-complex-calculus-second-stirlings}, we find that for any $\epsilon > 0$ we have
\[
    \sqrt{\frac{d}{p \log p}} \cdot \E[\injnormC{T}] \leq \exp\left(\frac{d-1}{2\alpha} + \frac{\log(\alpha) - 1}{2}\right) \left( 1 + \frac{1/4+\epsilon}{\alpha p}\right) \left(1+(1+\epsilon)\frac{d-1}{2\alpha} \cdot \frac{\log \log p}{\log p}\right).
\]
for all $p$ large enough. The function $\alpha \mapsto \frac{d-1}{2\alpha} + \frac{\log(\alpha)-1}{2}$ is minimized at $\alpha = d-1$, where it takes the value $\log(d-1)/2$. The dominant term in the correction is $1 + (1+\epsilon)\frac{d-1}{2\alpha} \frac{\log \log p}{\log p}$, and for this choice of $\alpha$ we have $\frac{d-1}{2\alpha} = \frac{1}{2}$, which finishes the proof.

\noindent{\it Real case:} Analogously to the complex case, we start by using equations \eqref{eq:real_inj_norm_bounded} and \eqref{eqn:rigid-subgaussian-moment-bound} to obtain
\begin{equation}
\label{eqn:real-case-finite-bound}
    \sqrt{d} \E[\injnormR{T}] \leq \sqrt{d} \E[\injnormR{T}^{2k}]^{\frac{1}{2k}} \leq \left( (2k-1)!! \left( \frac{2^k \Gamma(\frac{d}{2}+k)}{(2k-1)!! \Gamma(\frac{d}{2})} \right)^p \right)^{\frac{1}{2k}}
\end{equation}
for any positive integer $k$. The proof is essentially exactly the same as before; we replace \eqref{eqn:p-to-infinity-complex-calculus-first-stirlings} with 
\[
    \lim_{k \to \infty} \left( \frac{((2k-1)!!)^{\frac{1}{2k}}}{\sqrt{k}} \cdot \sqrt{\frac{e}{2}} - 1\right) k = \frac{\log 2}{4},
\]
replace \eqref{eqn:p-to-infinity-complex-calculus-second-stirlings} with 
\[
    \lim_{p \to \infty} \left(\frac{\left(\frac{2^k\Gamma(\beta + k)}{(2k-1)!!\Gamma(\beta)}\right)^{\frac{p}{2k}}}{\exp\left(\frac{2\beta-1}{4\alpha}\right)} - 1 \right) \frac{\log p}{\log \log p} = \frac{2\beta-1}{4\alpha},
\]
and use these to obtain that, for every $\epsilon > 0$,
\[
    \sqrt{\frac{d}{p \log p}} \E[\injnormR{T}] \leq \exp\left(\frac{d-1}{4\alpha} + \frac{\log(2\alpha) - 1}{2} \right) \left( 1 + \frac{\frac{\log 2}{4} + \epsilon}{\alpha p \log p} \right) \left(1 + (1+\epsilon) \frac{d-1}{4\alpha} \cdot \frac{\log \log p}{\log p}\right)
\]
for all $p$ large enough. The optimizer is $\alpha = (d-1)/2$. \\

\noindent{\it \RomanNum{2}. $d_i\to \infty, p$ fixed regime.} \\
\noindent{\it Real case:}\\
Let $T$ be a random tensor of \hyperlink{model:ak}{Model $A_\R$}. Arguing as in \eqref{eqn:real-case-finite-bound}, we obtain
\begin{equation*}
   \E_T\injnormR{T}\le\left[2^{pk}\left(\prod_{i=1}^p\frac{\Gamma(d_i/2+k)}{(2k-1)!!\Gamma(d_i/2)}\right)\frac{(2k-1)!!}{(d_1\ldots d_p)^{k/p}}\right]^{\frac1{2k}} 
\end{equation*}
for all positive integers $k$. Let $d_i$ and $\eta_i$ be as in the theorem statement. Fix some $\alpha > 0$, and let $k = \lfloor d_1/\alpha \rfloor$. Then it follows that
\begin{equation*}
    \lim_{k\to \infty} \left[2^{pk}\left(\prod_{i=1}^p\frac{\Gamma(d_i/2+k)}{(2k-1)!!\Gamma(d_i/2)}\right)\frac{(2k-1)!!}{(d_1\ldots d_p)^{k/p}}\right]^{\frac1{2k}}= \psi^{\R}_p(\alpha; \eta_2,\ldots,\eta_p),
\end{equation*}
since for all $a>0$
\begin{equation*}
    \lim_{k\to \infty} \left( \frac{(2k-1)!!}{\lfloor a k\rfloor^k}\right)^{1/2k}=\sqrt{\frac{2}{a e}}, \quad  \lim_{k\to \infty}\left(\frac{\Gamma(\lfloor a k\rfloor/2+k)}{(2k-1)!!\Gamma(\lfloor a k \rfloor/2)} \right)^{1/2k}=\frac12 a^{-a/4}(2+a)^{\frac{2+a}{4}}.
\end{equation*}
Hence, for all $\alpha > 0$ we have
\begin{equation*}
   \limsup_{d_1\to \infty}\E_T\injnormR{T}\le\psi^{\R}_p(\alpha; \eta_2,\ldots,\eta_p),
\end{equation*}
and minimizing over $\alpha$ leads to \eqref{eq:multivariate_bound} for $\K=\R$. 
We now prove the uniqueness of the minimizer when $d_1=d_2=\ldots=d_p=d$. To this aim we compute the derivative
\[
    \psi_p^{\R}{}'(\alpha)=\sqrt{\frac1{e 2^{p+3}\alpha^3}} (\alpha^{-\alpha/4}(2+\alpha)^{\frac{2+\alpha}{4}})^p\left(p\alpha\log\left(1+2/\alpha\right)-2\right).
\]
Note that $\psi_p^{\R}{}'(\alpha)$ is of the same sign as $f_p(\alpha)=p\alpha\log\left(1+2/\alpha\right)-2$. One easily shows $f_p'(\alpha)>0$ for all $\alpha>0$ (compute the limiting values of $f_p'$ at $0,+\infty$ and study the sign of $f_p''(\alpha)$). This shows $f_p(\alpha)$ is strictly increasing. Moreover, $\lim_{\alpha \to 0^+}f(\alpha)=-2$ and $\lim_{\alpha\to \infty}f(\alpha)= 2(p-1)$, hence $f_p(\alpha)$ changes sign once and so does $\psi_p^{\R}{}'(\alpha)$. Note that $\lim_{\alpha\to 0^+}\psi_p^{\R}(\alpha)=+\infty$ and $\lim_{\alpha\to \infty}\psi_p^{\R}(\alpha)=+\infty$ which suffices for the uniqueness of the minimum $\alpha_0^\R(p)$ of $\psi_p^{\R}$.\\

The asymptotic claim for large $p$ is obtained by comparing the value of $\psi_p^\R(\alpha)$ at the solution of $f_p(\alpha)=0$ with its value at the solution of a simpler equation solved by Lambert function. Let $y:=\frac{2}{p\alpha}$; then $f_p(\alpha)=0$ is equivalent to $\log(1+py)=y$, i.e., is equivalent to $\log(py)+\log(1+\frac1{py})=y.$ 
Let $y'$ be the largest solution of $\log(py')=y'$. Since for all $x>0, \ \log(1+px)>\log(px),$ one must have $y\neq y'$. Let $p\ge 3$; then $y'$ satisfies $-y'e^{-y'}=-\frac1{p}.$ The largest solution $y'$ is expressed using the second branch of the Lambert function $y'=-\mathcal{W}_{-1}(-1/p)$. The asymptotic expansion as $p\to \infty$ is known from the expansion for small arguments of $\mathcal{W}_{-1}$, see \cite[pp. 349-350]{corless1996lambert}. We recall,
$$\mathcal{W}_{-1}(x)=\log(-x)-\log(-\log(-x))+\frac{\log(-\log(-x))}{\log(-x)}+O_{x\to 0^-}\left(\left( \frac{\log(-\log(-x))}{\log(-x)}\right)^2\right).$$
Thus
$$y'=\log(p)+\log\log(p)+\frac{\log\log(p)}{\log(p)}+O_{p\to \infty}\left(\left(\frac{\log\log(p)}{\log(p)}\right)^2\right).$$
Let $\alpha'=\frac2{py'}$, $\alpha'= \frac2{p\log p}(1+o_p(1))$. Since $\psi_p^\R(\alpha)$ is uniquely minimized at $\alpha_0^\R(p),$  $$\psi_p^\R(\alpha^\R_0(p))\le\psi_p^\R(\alpha'),$$
while one easily shows $\psi_p^\R(\alpha')=\sqrt{p\log p}(1+o_p(1))$ using $\alpha'=\frac2{p\log p}(1+o_p(1)).$ \\

\noindent{\it Complex case:}\\
Let $T$ be a random tensor of \hyperlink{model:ak}{Model $A_\C$}. The proof technique of the real case above applies here up to minor changes. One starts instead with equation \eqref{eq:inj_norm_bounded} of Theorem \ref{thm:deterministic_bound} and \eqref{eqn:rigid-subgaussian-moment-bound} of Lemma \ref{lem:rigid-subgaussian-moment-bound}. This leads to
\begin{align*}
    \E_T\injnorm{T}&\le \left( \prod_{i=1}^p\binom{d_i+k-1}{k} \E_Z\E_{u_i}\left( \lvert \langle Z,u_1\otimes \ldots\otimes u_p\rangle\rvert^{2k}\right) \right)^{\frac1{2k}}\\
    &\le\left(\frac{k!}{(d_1\ldots d_p)^{k/p}}\prod_{i=1}^p\binom{d_i+k-1}{k}\right)^{\frac1{2k}}.
\end{align*}
Letting for all $i=2,\ldots, p$, $\eta_i>0$, $d_i=\lfloor \eta_i d_1\rfloor$ and $d_1=\lfloor \alpha k\rfloor$ then computing the limit of the right hand side above, we find
\begin{multline*}
    \lim_{k\to\infty}\left(\frac{k!}{(d_1\ldots d_p)^{k/p}}\prod_{i=1}^p\binom{d_i+k-1}{k}\right)^{\frac1{2k}}\\
    =\frac1{\sqrt{e}(\eta_2\ldots\eta_p)^{\frac1{2p}}}\left(\frac{(1+\alpha)^{1+\alpha}}{\alpha^{1+\alpha}}\right)^{1/2} \left( \prod_{i=2}^p\frac{(1+\alpha \eta_i)^{1+\alpha \eta_i}}{(\alpha \eta_i)^{\alpha \eta_i}}\right)^{1/2},
\end{multline*}
which we minimize over $\alpha$ to get the result. The asymptotic large $p$ claim follows from the real case and the earlier remark $\psi_p^\C(\alpha;\eta_2,\ldots,\eta_p)=\psi_p^\R(2\alpha;\eta_2,\ldots,\eta_p)$.
\end{proof}

\begin{remark}
A  benefit of the large-moment bound in Theorem~\ref{thm:asymmetric} is that it extends with minimal changes to rectangular tensors in $\K^{d_1}\otimes \cdots \otimes \K^{d_p}$. In contrast, an approach based on counting critical points via the Kac--Rice formula becomes significantly more delicate outside the cubic setting $d_1 = \cdots = d_p$. An important obstacle is that the Hessian typically does not admit a tractable explicit limiting law, which seems to obstruct a direct evaluation of the generalized logarithmic complexity $\Sigma_p$ appearing in \cite[Lemma~2.7]{dartois2024injective}. 
\end{remark}


\section{Application: Symmetric tensors}
\label{sec:application_symmetric}

This section is devoted to random symmetric tensors of Models \hyperlink{model:sc}{$S_\C$} and \hyperlink{model:stildec}{$\widetilde{S}_\C$}. While real versions of random symmetric tensors are fundamental to spherical spin glass theory \cite{auffinger2013random}, their complex versions are representative of random bosonic quantum states. In this paper, due to the limitations of our deterministic bound \eqref{eq:symmetric_inj_norm_bounded}, we limit ourselves to the complex case. Before we start, let us state the following remark on the opportunity of introducing the two Models \hyperlink{model:sc}{$S_\C$} and \hyperlink{model:stildec}{$\widetilde{S}_\C$} of symmetric random tensors.
\begin{remark}\label{rem:SCvstildeSC}
    The two families \hyperlink{model:sc}{$S_\C$} and \hyperlink{model:stildec}{$\widetilde{S}_\C$} represent two natural ways of building random symmetric tensors with independent entries up-to-symmetry. Those two families have a nontrivial intersection. Indeed, if $\gamma$ (introduced in \hyperlink{model:stildec}{Model $S_\C$}) is a Gaussian random variable, and if $T$ (used for the definition of $\widetilde{B}$ in \hyperlink{model:stildec}{Model $\widetilde{S}_\C$}) is a Gaussian random tensor with i.i.d. entries distributed as $\gamma$, then $B\overset{\text{law}}{=}\widetilde{B}$. 
    However, neither example is included in the other. For example, if $B$ comes from \hyperlink{model:sc}{Model $S_\C$}, then all of its entries have the same law up to rescaling; if $\widetilde{B}$ comes from \hyperlink{model:stildec}{Model $\widetilde{S}_\C$}, then for example $B_{1,\cdots,1,2}$ has the distribution of the sum of two rescaled independent copies of some distribution $\gamma$, while $B_{1,\ldots,1}$ has the distribution of some rescaling of $\gamma$. Notice that the sum of two independent rescaled Steinhaus variables is not a rescaled Steinhaus variable\footnote{More generally, this would hold true if ``Steinhaus variable'' were replaced by any non-stable random variable.} (it is no longer supported on a circle, since the phases are independent). Thus, if we symmetrize a Steinhaus random state, we obtain a tensor from \hyperlink{model:stildec}{Model $\widetilde{S}_\C$} which cannot be obtained from \hyperlink{model:sc}{Model $S_\C$}, since its $(1,\ldots,1)$ and $(1,\ldots,1,2)$ entries are not the same distribution up to rescaling. Similarly, if we take $\gamma$ to be Steinhaus in the definition of \hyperlink{model:sc}{Model $S_\C$}, we obtain a tensor from \hyperlink{model:sc}{Model $S_\C$} which cannot be obtained from \hyperlink{model:stildec}{Model $\widetilde{S}_\C$}, since it if were, the underlying $T$ entries would have to be Steinhaus up to scaling (as can be seen from the $(1,\ldots,1)$ entry), but in this case the $(1,\ldots,1,2)$ entry would no longer be rescaled Steinhaus.
\end{remark}

Introducing first the function
\begin{equation}
    \phi_p:\R_+\to\R, \quad \alpha \mapsto \frac1{p^{p/2}\sqrt{e}}\alpha^{-\frac12(\alpha+1)}(p+\alpha)^{\frac12(p+\alpha)}
\end{equation}
we can prove the following theorem for symmetric random tensors of Models \hyperlink{model:sc}{$S_\C$} and \hyperlink{model:stildec}{$\widetilde{S}_\C$}.
\begin{theorem}\label{thm:symm_complex_tensors_bound}
    If $B$ comes from either Model \hyperlink{model:sc}{$S_\C$} or \hyperlink{model:stildec}{$\widetilde{S}_\C$}, then we have the following upper bounds for its injective norm in various regimes:
    \begin{itemize}
    \item If $d$ is fixed and $p \to \infty$, we have
    \[
        \limsup_{p \to \infty} \frac{1}{\sqrt{\log p}} \E[\injnormC{B}] \leq \sqrt{\frac{d-1}{d}},
    \]
    more precisely with the following rate: For all $\epsilon > 0$ there exists $p_0$ such that for all $p \geq p_0$ we have
    \[
        \frac{1}{\sqrt{\log p}} \E[\injnormC{B}] \leq \sqrt{\frac{d-1}{d}} \left( 1 + (1+\epsilon) \cdot \frac{2d-1}{4d-4} \cdot \frac{\log \log p}{\log p} \right).
    \]
    \item If $p$ is fixed and $d \to \infty$, we have 
    \begin{equation*}
        \limsup_{d\to\infty}\E[\injnormC{B}] \le \inf_{\alpha \in \R_+}\phi_p(\alpha).
    \end{equation*}
    The infimum is realized at some unique $\alpha_0^{S}(p)$ and one has $\phi_p(\alpha_0^{S}(p))\le\sqrt{\log p}(1+\oo_{p\to\infty}(1)).$
    \end{itemize}
\end{theorem}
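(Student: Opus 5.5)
We follow the template of the proof of Theorem \ref{thm:asymmetric}, now starting from the symmetric deterministic bound \eqref{eq:symmetric_inj_norm_bounded}. By Jensen and Fubini,
\[
\E[\injnormC{B}] \le \E[\injnormC{B}^{2k}]^{\frac1{2k}} \le \left(\binom{d+pk-1}{pk}\, \sup_{x\in\bSC^{d-1}} \E_B\big(\abs{\ip{B,x^{\otimes p}}}^{2k}\big)\right)^{\frac1{2k}} .
\]
So the first step is a symmetric analogue of Lemma \ref{lem:rigid-subgaussian-moment-bound}: for every unit $x$ we want
\[
\E_B\abs{\ip{B,x^{\otimes p}}}^{2k}\le \frac{k!}{d^k}.
\]

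\emph{Model $\widetilde S_\C$.} Since $P_{\sym,p}^{(d)}$ is self-adjoint and fixes $x^{\otimes p}$, we have $\ip{\widetilde B,x^{\otimes p}}=\ip{T,x^{\otimes p}}$. Lemma \ref{lem:rigid-subgaussian-moment-bound} with $d_i=d$ then gives the bound directly.

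\emph{Model $S_\C$.} Write $\ip{B,x^{\otimes p}}=\sum_{\text{multisets } m} \overline{B_m}\, c_m(x)$, where $c_m(x)=\frac{p!}{m_1!\cdots m_k!}\prod \overline{x}$-monomial, up to conjugation conventions. The $B_m$ are independent and circularly symmetric. Expanding the $2k$-th moment over balanced partitions exactly as in the complex case of Lemma \ref{lem:rigid-subgaussian-moment-bound}, each term is a product of nonnegative weights $\abs{c_m}^{|b|}$ times $\E\abs{B_m}^{|b|}$. Rigid sub-Gaussianity lets us replace $\gamma$ by a complex Gaussian of variance $1/d$. For the Gaussian, $\ip{B,x^{\otimes p}}$ is complex Gaussian with variance
\[
\sum_m \frac{m!}{p!\,d}\abs{c_m}^2=\frac1d\sum_m \frac{p!}{m!}\abs{x^m}^2=\frac{\|x\|^{2p}}{d}=\frac1d
\]
by the multinomial theorem. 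Its $2k$-th absolute moment is therefore $k!/d^k$. Combining,
\[
\E\injnormC{B}\le \left(\frac{k!}{d^k}\binom{d+pk-1}{pk}\right)^{\frac1{2k}}.
\]

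\emph{Regime $d$ fixed, $p\to\infty$.} Here $\binom{d+pk-1}{pk}\le (pk+d-1)^{d-1}/(d-1)!$, so the binomial factor contributes only $\exp\!\big(\tfrac{(d-1)\log(pk)}{2k}\big)$. Take $k=\lfloor \alpha\log p\rfloor$. Then $(k!)^{1/2k}\approx\sqrt{k/e}$ and $(pk)^{(d-1)/2k}\approx \exp\!\big(\tfrac{d-1}{2\alpha}(1+\tfrac{\log\log p}{\log p})\big)$. The bound becomes
\[
\sqrt{\tfrac{\log p}{d}}\,\exp\!\Big(\tfrac{d-1}{2\alpha}+\tfrac{\log\alpha-1}{2}\Big)(1+\text{corrections}),
\]
which is minimized at $\alpha=d-1$, giving $\sqrt{(d-1)/d}$. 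The rate $\tfrac{2d-1}{4d-4}\tfrac{\log\log p}{\log p}$ should come from collecting three corrections: the $\tfrac{d-1}{2\alpha}\cdot\tfrac{\log\log p}{\log p}=\tfrac12\tfrac{\log\log p}{\log p}$ term from $\log k$ inside $\log(pk)$; the Stirling correction $\tfrac{\log k}{4k}\approx \tfrac{\log\log p}{4(d-1)\log p}$; and the floor effects. Their sum is $\tfrac{2(d-1)+1}{4(d-1)}=\tfrac{2d-1}{4d-4}$, which matches. I would carry this out with explicit $\epsilon$-bounds as in the asymmetric proof.

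\emph{Regime $p$ fixed, $d\to\infty$.} Set $k=\lfloor d/\alpha\rfloor$ (or $d\approx\alpha k$). Stirling gives
\[
\binom{\alpha k+pk}{pk}^{1/2k}\to \left(\frac{(p+\alpha)^{p+\alpha}}{p^p\alpha^\alpha}\right)^{1/2},\qquad (k!/d^k)^{1/2k}\to (e\alpha)^{-1/2},
\]
whose product is exactly $\phi_p(\alpha)$. For uniqueness, $\tfrac{d}{d\alpha}\log\phi_p=\tfrac12\big(\log(1+p/\alpha)-1/\alpha\big)$. Its sign equals that of $g(\alpha)=\alpha\log(1+p/\alpha)-1$, which is increasing from $-1$ to $p-1$, so there is a single zero, and $\phi_p\to\infty$ at both ends. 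For large $p$, evaluate at $\alpha'=1/\log p$: $\log\phi_p=\tfrac12\big[(p+\alpha)\log(1+\alpha/p)+\alpha\log(p/\alpha)-\log\alpha-1\big]$, and at $\alpha'$ this is $\tfrac12\log\log p+o(1)$, giving $\sqrt{\log p}(1+o(1))$.

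\emph{Main obstacle.} The hardest part is the precise bookkeeping of the second-order rate in the first regime, since several corrections of order $\tfrac{\log\log p}{\log p}$ must be tracked together. The Model $S_\C$ moment comparison is the other delicate point: one must check that every term in the partition expansion has a nonnegative coefficient, so that dominating moments termwise is legitimate.
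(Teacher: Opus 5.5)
Your proposal is correct and follows the paper's proof essentially step for step. You use the same symmetric moment bound $k!/d^k$, obtained by Gaussian comparison for Model $S_\C$ (your multinomial computation of the Gaussian variance fills in the paper's ``one finds'') and by self-adjointness of $P_{\sym,p}^{(d)}$ for Model $\widetilde{S}_\C$. You also make the same choices $k=\lfloor \alpha\log p\rfloor$ with $\alpha=d-1$, and $d\approx \alpha k$. Evaluating $\phi_p$ directly at $\alpha'=1/\log p$ is a slightly cleaner substitute for the paper's Lambert-function comparison. Note that the floor effects are only $O(1/\log p)$, so the rate $\frac{2d-1}{4d-4}$ comes solely from the two $\frac{\log\log p}{\log p}$ corrections you actually summed, not from three.
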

As in Section \ref{sec:application_asymmetric}, we need the analog of Lemma \ref{lem:rigid-subgaussian-moment-bound}.
\begin{lemma}\label{lem:sym-rigid-subgaussian-moment-bound}
    Let $u\in \C^d$ be a unit norm vector. Let $B\in \Sym_{p}(\C^d)$ be a random symmetric tensor of \hyperlink{model:sc}{Model $S_\C$}. Then,
    \begin{equation}\label{eq:symmetric:rigid-sub-Gaussian-bound}
        \E_{B}\left(\lvert\langle B,u^{\otimes p}\rangle \rvert^{2k} \right)\le \frac{k!}{d^k}.
    \end{equation}
   Similarly, if $\widetilde{B}\in \Sym_p(\C^d)$ is a random tensor of \hyperlink{model:stildec}{Model $\widetilde{S}_\C$}, then
   \begin{equation}\label{eq:symmetric-tilde:rigid-sub-Gaussian-bound}
        \E_{\widetilde{B}}\left(\lvert\langle \widetilde{B},u^{\otimes p}\rangle \rvert^{2k} \right)\le \frac{k!}{d^k}.
    \end{equation}
\end{lemma}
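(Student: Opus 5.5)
We follow the proof of Lemma \ref{lem:rigid-subgaussian-moment-bound}: expand the moment over partitions into independent entries, replace each block's moment by the Gaussian one using rigid sub-Gaussianity, and evaluate the resulting Gaussian moment exactly. The comparison object in both cases is the symmetric complex Gaussian tensor $G$. We take $G$ from Model \hyperlink{model:sc}{$S_\C$} with $\gamma\sim\mathcal{N}_\C(0,1/d)$; equivalently (Remark \ref{rem:SCvstildeSC}) $G=P^{(d)}_{\sym,p}(Z)$ with $Z$ having i.i.d. $\mathcal{N}_\C(0,1/d)$ entries.

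\emph{Gaussian computation.} Since $u^{\otimes p}$ is symmetric, $\langle G,u^{\otimes p}\rangle=\langle P_{\sym,p}^{(d)}Z,u^{\otimes p}\rangle=\langle Z,u^{\otimes p}\rangle$. This variable is circularly symmetric complex Gaussian with variance $\frac1d\|u^{\otimes p}\|^2=\frac1d$. Hence $\E|\langle G,u^{\otimes p}\rangle|^{2k}=k!/d^k$, and it suffices to show that each model's $2k$-th moment is dominated by the Gaussian one.

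\emph{Model $S_\C$.} Group the multi-indices into orbits (multisets) $\mathbf{m}$ under $\mathfrak{S}_p$. Then
\[\langle B,u^{\otimes p}\rangle=\sum_{\mathbf m}\overline{B_{\mathbf m}}\,\binom{p}{\mathbf m}\,u^{\mathbf m},\]
where $u^{\mathbf m}=\prod_j u_j^{m_j}$, $\binom{p}{\mathbf m}=p!/\prod_j m_j!$ is the orbit size, and the variables $B_{\mathbf m}$ are independent with $B_{\mathbf m}\overset{\text{law}}{=}c_{\mathbf m}\gamma$. This is a linear combination of independent circularly symmetric variables with deterministic complex coefficients $a_{\mathbf m}$. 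Expanding $|\sum_{\mathbf m} a_{\mathbf m}X_{\mathbf m}|^{2k}$ over balanced partitions of $[k]\sqcup[\bar k]$ exactly as in the complex case of Lemma \ref{lem:rigid-subgaussian-moment-bound}, each block contributes $|a_{\mathbf m}|^{|b|}\,\E|X_{\mathbf m}|^{|b|}\ge0$. Circular symmetry kills the unbalanced terms and leaves nonnegative coefficients. Rigid sub-Gaussianity of $\gamma$ gives $\E|c\gamma|^{2q}\le q!\,(c^2/d)^q=\E|c\gamma_G|^{2q}$, so termwise comparison yields $\E|\langle B,u^{\otimes p}\rangle|^{2k}\le\E|\langle G,u^{\otimes p}\rangle|^{2k}$.

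\emph{Model $\widetilde S_\C$.} This case is easier. As above, $\langle\widetilde B,u^{\otimes p}\rangle=\langle T,u^{\otimes p}\rangle$ with $T$ from Model $A_\C$ with $d_i=d$. Lemma \ref{lem:rigid-subgaussian-moment-bound} with $u_i=u$ then gives the bound $k!/d^{k}$ directly.

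The main point requiring care is the expansion step for Model $S_\C$: one must check that after grouping by orbits the coefficients multiplying products of block moments are of the form $\prod|a_{\mathbf m}|^{|b|}\ge 0$. Only then can the monotone comparison be applied, and this rests on circular symmetry forcing balance within each block. This is the same mechanism as before, with the index set replaced by orbits and the coefficients $a_{\mathbf m}=\binom{p}{\mathbf m}u^{\mathbf m}$.
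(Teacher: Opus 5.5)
Your proof is correct and follows the paper's argument. For Model $S_\C$ you expand over balanced partitions of the independent (orbit) entries, bound each block moment by the Gaussian one using rigid sub-Gaussianity, and evaluate the Gaussian moment $k!/d^k$; for Model $\widetilde{S}_\C$ you use $\langle \widetilde{B},u^{\otimes p}\rangle=\langle T,u^{\otimes p}\rangle$ together with Lemma~\ref{lem:rigid-subgaussian-moment-bound}, exactly as the paper does, and your explicit Gaussian computation via $G\overset{\text{law}}{=}P^{(d)}_{\sym,p}(Z)$ and the orbit coefficients $a_{\mathbf m}$ just fills in details the paper leaves as ``one finds'' and ``the same strategy.''
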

\begin{proof}
    The proof of inequality \eqref{eq:symmetric:rigid-sub-Gaussian-bound} comes from an approach similar to the one detailed in the proof of Lemma \ref{lem:rigid-subgaussian-moment-bound}. Therefore, we only recall the essential ideas here.  Let $G\in \Sym_{p}(\C^d)$ be a complex Gaussian symmetric tensor, whose entries are independent up to symmetry, and whose variances are the same as for a random tensor of \hyperlink{model:sc}{Model $S_\C$}.\\
    One finds
    \begin{equation*}
        \E_{G}\left(\lvert\langle G,u^{\otimes p}\rangle \rvert^{2k} \right)= \frac{k!}{d^k}.
    \end{equation*}
    
    To prove the upper bound when $G$ is replaced by $B$ we adopt the same strategy as in the proof of Lemma \ref{lem:rigid-subgaussian-moment-bound}, that is we expand the Hermitian product as a sum over components of $B$, factor the $B$ average of products of components of $B$ over the independent ones, upper bound those factors by the corresponding averages with $G$ components replacing those of $B$, and then repackage the sum to obtain $\E_{B}\left(\lvert\langle B,u^{\otimes p}\rangle \rvert^{2k} \right)\le \E_{G}\left(\lvert\langle G,u^{\otimes p}\rangle \rvert^{2k} \right).$

   The proof of inequality \eqref{eq:symmetric-tilde:rigid-sub-Gaussian-bound} is a consequence of Lemma \ref{lem:rigid-subgaussian-moment-bound}. Simply notice that, for all unit norm complex vector $u\in \C^d$, one has
    \begin{equation*}
        \langle \widetilde{B},u^{\otimes p}\rangle =\langle P_{\sym,p}^{(d)}(T),u^{\otimes p}\rangle = \langle T, P_{\sym,p}^{(d)}(u^{\otimes p})\rangle=\langle T, u^{\otimes p}\rangle,
    \end{equation*}
    where we use the hermiticity of $P_{\sym,p}^{(d)}$ and $u^{\otimes p}\in \Sym_p(\C^d).$ Since $T$ is from \hyperlink{model:ak}{Model $A_\C$}, it follows from Lemma \ref{lem:rigid-subgaussian-moment-bound} for $u_1=\ldots=u_p=u$ and $d_1=\ldots=d_p=d$, that
    \begin{equation*}
        \E_{\widetilde{B}}\left( \lvert\langle \widetilde{B},u^{\otimes p}\rangle\rvert^{2k}\right)=\E_T\left(\lvert\langle T,u^{\otimes p}\rangle\rvert^{2k} \right)\le \frac{k!}{d^k}.
    \end{equation*}
\end{proof}
\begin{proof}[Proof of Theorem \ref{thm:symm_complex_tensors_bound}]

The proof of this is very similar to the proof of Theorem \ref{thm:asymmetric}.\\ 

\noindent{\it \RomanNum{1}. $d$ fixed and $p \to \infty.$} \\

From equations \eqref{eq:symmetric_inj_norm_bounded} and \eqref{eq:symmetric:rigid-sub-Gaussian-bound} or \eqref{eq:symmetric-tilde:rigid-sub-Gaussian-bound}, we obtain
\[
    \sqrt{d} \E[\injnormC{B}] \leq \sqrt{d} \E[\injnormC{B}^{2k}]^{\frac{1}{2k}} \leq \left( k! \binom{d+pk-1}{pk} \right)^{\frac{1}{2k}}
\]
for any positive integer $k$. Now we take $k = \lfloor \alpha \log p \rfloor$ for $\alpha > 0$ to be determined. Arguing as in \eqref{eqn:p-to-infinity-complex-calculus-first-stirlings}, we find that for any $\epsilon > 0$ and large enough $k$ (equivalently, large enough $p$) we have
\[
    (k!)^{\frac{1}{2k}} \leq \sqrt{\alpha \log p}{e} \left( 1 + \left( \frac{1}{4} + \epsilon \right) \frac{\log k}{k} \right).
\]
If $k = \lfloor \alpha \log p \rfloor$, then $\frac{\log k}{k} \leq \frac{\log(\alpha \log p)}{\alpha \log p - 1}$, and since $\lim_{p \to \infty} \frac{\log(\alpha \log p)}{\alpha \log p - 1} \cdot \frac{\alpha \log p}{\log \log p} = 1$, we find that for any $\alpha, \epsilon > 0$ and $p$ large enough, we have
\[
    (k!)^{\frac{1}{2k}} \leq \sqrt{\frac{\alpha \log p}{e}} \left( 1 + (1+\epsilon) \cdot \frac{1}{4\alpha} \cdot \frac{\log\log p}{\log p} \right).
\]
(This is the main difference from before; the correction is now order $\frac{\log \log p}{\log p}$ instead of $\frac{1}{p}$, so it will contribute to the correction term in the theorem statement.) We replace \eqref{eqn:p-to-infinity-complex-calculus-second-stirlings} with
\[
    \lim_{p \to \infty} \left( \frac{\binom{\beta+pk}{pk}^{\frac{1}{2k}}}{\exp\left(\frac{\beta}{2\alpha}\right)} - 1 \right) \frac{\log p}{\log \log p} = \frac{\beta}{2\alpha}
\]
for $k = \lfloor \alpha \log p \rfloor$, and use these to obtain that, for every $\epsilon > 0$,
\begin{align*}
    \sqrt{\frac{d}{\log p}} \E[\injnormC{B}] \leq &\exp\left(\frac{d-1}{2\alpha} + \frac{\log(\alpha) - 1}{2} \right) \\
    &\quad \times \left( 1 + (1+\epsilon) \cdot \frac{1}{4\alpha} \cdot \frac{\log \log p}{\log p}\right) \left( 1 + (1+\epsilon)\frac{d-1}{2\alpha} \cdot \frac{\log \log p}{\log p} \right).
\end{align*}
The fraction $\alpha \mapsto \frac{d-1}{2\alpha} + \frac{\log(\alpha)-1}{2}$ is minimized at $\alpha = d-1$, where it takes the value $\log(d-1)/2$. The dominant term in the correction is actually the cross term, which then has the form 
\[
    (1+\epsilon)\left( \frac{1}{4\alpha} + \frac{2(d-1)}{4\alpha}\right) \frac{\log \log p}{\log p} = (1+\epsilon) \frac{2d-1}{4d-4} \cdot \frac{\log \log p}{\log p}.
\]

\noindent{\it \RomanNum{2}. $p$ fixed and $d \to \infty.$} \\

   Again, from Theorem \ref{thm:deterministic_bound}, equation \eqref{eq:symmetric_inj_norm_bounded} and Lemma \ref{lem:sym-rigid-subgaussian-moment-bound}, equation \eqref{eq:symmetric:rigid-sub-Gaussian-bound} and \eqref{eq:symmetric-tilde:rigid-sub-Gaussian-bound}, we obtain
    \begin{equation*}
        \E_B\injnormC{B}\le\left(\binom{d+pk-1}{pk} \frac{k!}{d^k}\right)^{1/2k}
    \end{equation*}
    and after setting $d=\lfloor\alpha k\rfloor$ one checks
    \begin{equation}
        \lim_{k\to\infty}\left(\binom{\lfloor\alpha k\rfloor+pk-1}{pk} \frac{k!}{\lfloor\alpha k\rfloor^k}\right)^{1/2k}=\phi_p(\alpha),
    \end{equation}
    which after minimizing over $\alpha$ leads to the first claim. We now come to the asymptotic behavior as $p\to \infty$. To this aim, we follow the same route as in the proof of Theorem \ref{thm:asymmetric}. We differentiate $\phi_p(\alpha)$ to find that the derivative vanishes if and only if the function $h_p(\alpha):=\alpha\log\left(1+\frac{p}{\alpha}\right)-1$ vanishes. $h_p$ can be shown to admit a unique positive zero on the positive line. This zero is the unique minimum of $\phi_p$. We upper bound the value of $\phi_p$ using a similar technique. We set $y=p/\alpha$ so that $h_p(\alpha)=0 \Leftrightarrow p(\log y +\log\left(1+1/y \right))=y$. We consider the equation $p\log y'=y'$ on an auxiliary variable $y'$ instead. Using the Lambert function, this latter equation is solved by $y'=-p\mathcal{W}_{-1}(-1/p)$. We therefore obtain $\alpha'=p/y'=\frac1{\log p}\left(1-\frac{\log \log p}{\log p}+o\left(\frac{\log \log p }{\log p}\right)\right)$ which suffices to prove the last claim.
\end{proof}

\begin{remark}
We remark that, combined with prior results of Friedland and Kemp \cite{friedland2018most}, our result shows that rigidly sub-Gaussian bosonic states are almost as entangled as possible. Indeed, fix $d \geq 2$ and let $p \to \infty$. Let $B$ come from \hyperlink{model:sc}{Model $S_\C$}, and define
\[
    \ket{\psi_B} \defeq \frac{B}{\|B\|_{\textup{HS}}}.
\]
Corollary \ref{cor:comparing-friedland-kemp} below shows that $\limsup_{p \to \infty} \E[\injnormC{\ket{\psi_B}}] \leq \sqrt{\frac{(d-1)!(d-1)(\log p)}{p^{d-1}}}$; on the other hand, Theorem 1.1 of \cite{friedland2018most} shows that, deterministically for each $d$ and $p$, we have $\injnormC{\ket{\psi_B}} \geq \left( \binom{d+p-1}{p} \right)^{-1/2}$. As $p \to \infty$, this scales like $\sqrt{\frac{(d-1)!}{p^{d-1}}} (1 + \oo_{p \to \infty}(1))$. Thus, our results show that $\injnormC{\ket{\psi_B}}$ is within a factor $\sqrt{(d-1)(\log p)}$ of being as small as possible, and thus (since the entanglement is twice the negative logarithm of the injective norm) indeed near-maximally entangled.
\end{remark}

\begin{remark}
In the physics literature, a common folklore claim is that symmetric tensors are less entangled than (i.e., have larger injective norm than) their asymmetric counterparts, when both are normalized to have unit Frobenius norm. (For comments on this intuition see \textit{e.g.} \cite{friedland2018most,nakata2020generic} and \cite[Figure 7]{FitLanNec2025} for numerics compatible with this observation in the $p=3$ case.) In this remark, we explain how our results give a formalization of this heuristic.

Indeed, let $T$ be a tensor from Model \hyperlink{model:ak}{$A_\C$}, with underlying $\C$-rigidly-sub-Gaussian distribution $\gamma$, and let $B$ and $\widetilde{B}$ be tensors from Models \hyperlink{model:sc}{$S_\C$} and \hyperlink{model:stildec}{$\widetilde{S}_\C$}, respectively, with the same underlying $\gamma$. Let $\ket{\psi_T} = T/\|T\|_{\textup{HS}}$, $\ket{\psi_B} = B/\|B\|_{\textup{HS}}$, and $\ket{\psi_{\widetilde{B}}} = \widetilde{B}/\|\widetilde{B}\|_{\textup{HS}}$. Take for concreteness the case $d$ fixed and $p \to \infty$. We claim that, for every $\epsilon > 0$ and $p$ large enough, we have
\begin{equation}
\label{eqn:symmetric-vs-asymmetric}
    \E[\injnormC{\ket{\psi_T}}] \leq (1+\epsilon) \sqrt{\frac{(d-1)p\log p}{d^p}} \ll \frac{1}{\sqrt{\binom{d+p-1}{p}}} \leq \min(\injnormC{\ket{\psi_B}},\injnormC{\ket{\psi_{\widetilde{B}}}}).
\end{equation}
In \eqref{eqn:symmetric-vs-asymmetric}, the third inequality is true \emph{deterministically}, and follows from \cite{friedland2018most} as mentioned above; the second inequality $\ll$ is just an emphatic version of $\leq$, emphasizing that the left-hand side decays exponentially in $p$ and the right-hand side decays only polynomially in $p$; and the first inequality is what we prove now. If $G$ is the event that $\|T\|_{\textup{HS}} \leq (1-\delta) d^{(p-1)/2}$, then since $\injnormC{\ket{\psi_T}} = \injnormC{T}/\|T\|_{\textup{HS}}$, we have
\begin{align*}
    \E[\injnormC{\ket{\psi_T}}] &\leq \frac{1}{(1-\delta) d^{(p-1)/2}} \E[\injnormC{T}] + \E[\injnormC{\ket{\psi_T}}\mathds{1}_G] \\
    &\leq \frac{1+\epsilon}{1-\delta} \sqrt{\frac{(d-1)p\log p}{d^p}} + \P(G).
\end{align*}
In the last inequality, we used Theorem \ref{thm:asymmetric} and the deterministic inequality $\injnormC{\ket{\psi_T}} \leq 1$. Since $\P(G) = \P\Bigl(\frac{d\|T\|_{\textup{HS}}^2}{d^p} \leq (1-\delta)^2\Bigr)$, and $d\|T\|_{\textup{HS}}^2$ is the sum of $d^p$ independent variables of mean one and finite exponential moment, Chernoff bounds show that, for each $\delta > 0$, there exists $I(\delta)$ such that $\P(G) \leq \exp(-d^pI(\delta))$. Since this tends to zero much more quickly than the first term, we can absorb it at the cost of redefining $\epsilon$ and $\delta$.

In the Gaussian case, the first inequality in \eqref{eqn:symmetric-vs-asymmetric} also follows from our prior work \cite{dartois2024injective}, but the non-Gaussian cases are new to the best of our knowledge.
\end{remark}


\section{Application: Bounded-rank tensors} 
\label{sec:bounded_rank}

In this section, we consider the random tensors of \hyperlink{model:bc}{Model $B_\C$}. 
\begin{remark}
Such random tensors model random quantum states with limited multipartite Schmidt rank entanglement \cite{eisert2001schmidt,severini2006two}. The bound on the multipartite Schmidt rank of those states implies that the \emph{bipartite} Schmidt rank about any bipartition is also bounded above by $R$ (see \textit{e.g.} \cite{landsberg2011tensors} through the rank of flattenings of a tensor). Therefore, those states have a matrix product state representation with local bond dimensions $D_i\le R$ for all $1\le i\le p$ (this is a consequence of the algorithm described in \cite[section 4.1.3]{schollwock2011density}). They are easier to produce algorithmically (and possibly experimentally on a quantum computer) as they can be realized by low-depth quantum circuits \cite{Malz2024preparation}. It can be noted that those states are used in approximation methods of computational quantum chemistry \cite[section 2.4]{beck2000multiconfiguration} to extend simpler approximation schemes relying on product states. 
\end{remark}

Per the above remark, from a quantum information standpoint, it is natural to consider random complex tensors of bounded rank as they form a relevant family in several problems. However, one can imagine problems where a similar model of real random tensors of bounded rank is relevant. We expect that there is no obstruction in extending the content of this section to such a real model. However, for the sake of brevity, we prefer to focus solely on the complex version here. 
\\

Commenting on the rank of realizations of tensors of \hyperlink{model:bc}{Model $B_\C$}, we first note that for $R$ smaller than $d$, one has the property
\begin{proposition}\label{prop:rank_property}
    Let $p>2$,  $R<d$, and let the law of the entries of the random vectors $x^{(i)}_k$ be absolutely continuous with respect to Lebesgue measure. Then $\text{rank } T = R$ almost surely.
\end{proposition}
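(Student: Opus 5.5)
The upper bound $\operatorname{rank} T\le R$ holds by construction, so the work is the lower bound $\operatorname{rank} T\ge R$ almost surely. The plan has two parts: first show that the set of parameters $(x_m^{(i)})$ giving rank $<R$ lies in a proper Zariski-closed subset of the parameter space $(\C^d)^{pR}$; then use absolute continuity to conclude that this set has probability zero.

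\textbf{Step 1 (generic condition).} With probability one, the vectors $x_1^{(1)},\ldots,x_1^{(R)}$ are linearly independent, and likewise for each mode $m$. Indeed, since $R<d$, linear dependence is the vanishing of all $R\times R$ minors of the $d\times R$ matrix $[x_m^{(1)}\cdots x_m^{(R)}]$. This is a proper algebraic subvariety of $\C^{dR}$, because a generic matrix has full column rank. A proper algebraic subvariety has Lebesgue measure zero, and the joint law of the independent entries is absolutely continuous. I will also use that the vectors $x_m^{(i)}$ are almost surely nonzero.

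\textbf{Step 2 (deterministic rank lower bound).} Assume the factors in modes $1$, $2$ and $3$ are each linearly independent; this requires $p\ge3$, which is where the hypothesis $p>2$ enters. Group modes $3,\ldots,p$ into a single mode with vectors $z^{(i)}=x_3^{(i)}\otimes\cdots\otimes x_p^{(i)}$. These are also linearly independent, because tensor products of independent families indexed by the same $i$ stay independent: contracting a vanishing combination against dual vectors of the $x_4,\ldots,x_p$ families reduces to independence of the $x_3^{(i)}$. Flattening $T$ along the grouping gives a $3$-tensor $T'=\sum_{i=1}^R a_i\otimes b_i\otimes c_i$ with all three factor families independent. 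Any decomposition of $T$ into $r$ rank-one terms yields a decomposition of $T'$ into $r$ rank-one terms. Hence $\operatorname{rank} T\ge\operatorname{rank} T'$.

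It remains to show $\operatorname{rank} T'\ge R$. Flatten $T'$ as a matrix $M\in(\C^{d})\otimes(\C^{d}\otimes\C^{d^{p-2}})$, namely $M=\sum_i a_i (b_i\otimes c_i)^{t}$. Since the $a_i$ are independent and the $b_i\otimes c_i$ are independent, $M$ has matrix rank exactly $R$. Tensor rank is at least the rank of any flattening, so $\operatorname{rank}T\ge R$. Combined with the upper bound, $\operatorname{rank}T=R$ on the full-measure event of Step 1.

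\textbf{Main obstacle.} This argument only needs independence in two groups of modes, so it uses little more than $R<d$. The step requiring the most care is the measure-zero claim in Step 1. I will make it precise as follows. The map sending the entries to the minors is polynomial and not identically zero. The zero set of a nonzero polynomial on $\C^{N}\cong\R^{2N}$ has Lebesgue measure zero, for example by induction on variables with Fubini. Independence of the entries makes the joint law a product of absolutely continuous laws, hence itself absolutely continuous with respect to Lebesgue measure on $\C^N$. If the intended statement is instead about the \emph{minimal} rank and the uniqueness of the decomposition, I would invoke Kruskal's theorem with $k$-ranks equal to $R$ in each of the three grouped modes, since $3R\ge 2R+2$ holds for $R\ge2$. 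However, equality of rank already follows from the flattening argument above.
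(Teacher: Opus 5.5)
Your argument is correct, but it takes a different and more elementary route than the paper.

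\textbf{The paper's route.} It works geometrically. It cites a dimension result for secant varieties of the Segre variety (Landsberg, Prop.~5.3.1.6) to get $\sigma_{R-1}(\mathrm{Seg}_p)\subsetneq\sigma_R(\mathrm{Seg}_p)$ when $R<d$. It then observes that the preimage of the cone over $\sigma_{R-1}$ under the polynomial parametrization $\Phi:\{x_k^{(i)}\}\mapsto\sum_i x_1^{(i)}\otimes\cdots\otimes x_p^{(i)}$ is a proper algebraic subset of parameter space, hence Lebesgue-null.

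\textbf{Your route.} You exhibit an explicit full-measure event: the factors in modes $1$ and $2$ are linearly independent and all factors are nonzero. On this event the $d\times d^{p-1}$ flattening $\sum_i x_1^{(i)}\,(x_2^{(i)}\otimes\cdots\otimes x_p^{(i)})^t$ has matrix rank exactly $R$. You conclude because each rank-one term flattens to a matrix of rank at most one.

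\textbf{What each buys.} Your proof is self-contained linear algebra and needs no secant-variety dimension theory. It also shows the hypotheses are not sharp. First, $R\le d$ suffices. Second, contrary to your remark, $p>2$ plays no role, since only modes $1$ and $2$ need independent factors; the case $p=2$ is just a product of two full-column-rank $d\times R$ matrices. The detour through the $3$-tensor $T'$ and the Kruskal remark can therefore be dropped.

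The paper's machinery buys scalability in $R$. The strict inclusion $\sigma_{R-1}\subsetneq\sigma_R$ persists up to the generic rank, and the same pull-back argument then applies verbatim. Ordinary flattenings, by contrast, can never certify rank above $d^{\lfloor p/2\rfloor}$.
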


We prove this result in Appendix \ref{app:rank_property}. The relevant definitions, including the notion of tensor rank, are recalled there. Our main result on this model is given below. 

\begin{theorem}\label{thm:bounded_rank}
    Let $T$ be distributed according to the bounded-rank random tensor \hyperlink{model:bc}{Model $B_\C$} of rank $R$. Then for all fixed $d,k\in \mathbb{N}^*,$ 
    \begin{equation}\label{eq:finite_d_k_bounded_rank}
        \E[\injnormC{T}]\le\frac1{d^{p/2}}\left( (k!)^2\sum_{\mathbf{a}\in\N^R, \sum_s\mathbf{a}_s=k} \prod_{s=1}^R (\mathbf{a}_s!)^{p-2}\right)^{\frac1{2k}}.
    \end{equation}
    Moreover,
    \begin{equation}\label{eq:asymptotic_bound_bounded_rank}
        \lim_{d\to \infty}\E[\injnormC{T}]= 1.
    \end{equation}
\end{theorem}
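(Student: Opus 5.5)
We plan to bypass the averaging over $u$ and work directly with the intermediate quantity from the proof of Theorem \ref{thm:deterministic_bound}(b): for every tensor $T$ and every $k$,
\[
\injnormC{T}^{2k}\le \Bigl\langle T^{\otimes k},\Bigl(\textstyle\bigotimes_{m=1}^p P^{(d)}_{\mathrm{sym},k}\Bigr)T^{\otimes k}\Bigr\rangle .
\]
For $T=\sum_{s=1}^R x^{(s)}_1\otimes\cdots\otimes x^{(s)}_p$, the tensor $T^{\otimes k}$ expands as a sum over words $\mathbf{s}=(s_1,\dots,s_k)\in[R]^k$, and the right-hand side factorizes over the $p$ slots. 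Writing $P^{(d)}_{\mathrm{sym},k}=\frac1{k!}\sum_{\sigma\in\mathfrak S_k}\sigma$, it equals
\[
\sum_{\mathbf s,\mathbf t\in[R]^k}\ \prod_{m=1}^p\frac1{k!}\sum_{\sigma\in\mathfrak S_k}\prod_{j=1}^k\bigl\langle x_m^{(s_j)},x_m^{(t_{\sigma(j)})}\bigr\rangle .
\]
This turns the problem into computing expectations of products of Gram entries $\langle x_m^{(s)},x_m^{(t)}\rangle$ of independent random vectors. By Jensen's inequality, $\E\injnormC{T}\le(\E\injnormC{T}^{2k})^{1/2k}$, so it suffices to bound the expectation of this sum.

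For the finite-$(d,k)$ bound \eqref{eq:finite_d_k_bounded_rank}, I would proceed in two steps.
\begin{enumerate}
\item First, compare with the Gaussian case. Expand each Gram entry in coordinates and group indices by independence, exactly as in Lemma \ref{lem:rigid-subgaussian-moment-bound}. Circular symmetry kills every non-balanced term, and all surviving coefficients are nonnegative. $\C$-rigid sub-Gaussianity then bounds each factor by its Gaussian counterpart, reducing everything to i.i.d. $\mathcal N_\C(0,1/d)$ entries.
\item Then evaluate the Gaussian expectation combinatorially. For the products over slots $m$, use independence across $m$ and across $s$. The diagonal pattern, in which the content of $\mathbf t$ equals the content of $\mathbf s$ and $\sigma$ matches equal letters, is indexed by compositions $\mathbf a\in\N^R$ with $\sum_s\mathbf a_s=k$. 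It contributes the multinomial squared times $\prod_s(\mathbf a_s!)^{p}$, up to the normalization powers of $k!$ and $d$. The off-diagonal Wick pairings must be controlled by a crude domination, for instance Hölder, or the observation that Gaussian Gram moments are bounded by those of the fully paired terms.
\end{enumerate}
Collecting the surviving factors should produce $(k!)^2\sum_{\mathbf a}\prod_s(\mathbf a_s!)^{p-2}$ with the stated power of $d$. I expect this step to be the main obstacle. Keeping exact track of the normalization, meaning the $1/k!$ from the symmetrizers against the multinomials, together with the variance $1/d$ scaling, and showing the off-diagonal Wick terms are dominated rather than merely lower order, is where the argument can go wrong. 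If the direct domination fails, my fallback is the route through \eqref{eq:inj_norm_bounded} followed by averaging over $u$ first. For Gaussian $x$, the variables $\langle x^{(s)}_m,u_m\rangle$ are i.i.d. $\mathcal N_\C(0,1/d)$, so
\[
\E\Bigl|\sum_s\prod_m\langle x_m^{(s)},u_m\rangle\Bigr|^{2k}=d^{-pk}(k!)^2\sum_{\mathbf a}\prod_s(\mathbf a_s!)^{p-2}
\]
exactly. This route reproduces the sum in \eqref{eq:finite_d_k_bounded_rank}, at the price of the binomial prefactor from Theorem \ref{thm:deterministic_bound}(b).

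For the limit \eqref{eq:asymptotic_bound_bounded_rank}, I would prove matching bounds. For the lower bound, test the injective norm against $\bigotimes_m x_m^{(1)}/\|x_m^{(1)}\|$. The law of large numbers, together with uniform integrability from the sub-Gaussian moments, gives $\|x_m^{(s)}\|\to1$ and $\langle x_m^{(s)},x_m^{(t)}\rangle\to0$ for $s\ne t$, so $\liminf_d\E\injnormC{T}\ge1$. For the upper bound, fix $k$ and let $d\to\infty$ in the Gram expansion: every cross Gram entry tends to $0$ in $L^q$ and every norm tends to $1$, so only the diagonal patterns survive. The limit then equals an explicit $k$-dependent constant $c_k$, bounded using $\sum_{\mathbf a}1=\binom{k+R-1}{R-1}$ and $\prod_s\mathbf a_s!\le k!$. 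Since $c_k^{1/2k}\to1$ as $k\to\infty$ with $R$ and $p$ fixed, this gives $\limsup_d\E\injnormC{T}\le c_k^{1/2k}\to1$.
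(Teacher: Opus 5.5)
Your primary route to \eqref{eq:finite_d_k_bounded_rank} cannot succeed, because the inequality as printed is false: it is missing the factor $\binom{d+k-1}{k}^{p/(2k)}$.

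\begin{itemize}
\item With $R=1$ and $k=1$ it reads $\E\injnormC{T}=\prod_m\E\|x^{(1)}_m\|_2\le d^{-p/2}$, while $\E\|x^{(1)}_m\|_2\to1$.
\item For every fixed $k$ its right-hand side tends to $0$ as $d\to\infty$, contradicting the limit $1$ in the same theorem.
\end{itemize}

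Your step 2 breaks exactly at ``with the stated power of $d$''. By \eqref{eqn:complex-asymmetric-main-claim} one has the identity
\[
Q_k(T):=\Bigl\langle T^{\otimes k},\Bigl({\textstyle\bigotimes_{m=1}^p} P^{(d)}_{\mathrm{sym},k}\Bigr)T^{\otimes k}\Bigr\rangle=\binom{d+k-1}{k}^{p}\,\E_u\bigl|\langle T,u_1\otimes\cdots\otimes u_p\rangle\bigr|^{2k}.
\]
So your two routes are literally the same computation. Your own Gaussian formula then gives $\E Q_k(T)=\binom{d+k-1}{k}^p d^{-pk}(k!)^2\sum_{\mathbf a}\prod_s(\mathbf a_s!)^{p-2}$, which is of order one in $d$. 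For example, $Q_1(T)=\|T\|_{\textup{HS}}^2$ has mean $R$. Without the $u$'s there is no source of $d^{-pk}$. Each Gram entry sums $d$ coordinates of variance $1/d$, so already the diagonal terms $\prod_s\|x_m^{(s)}\|_2^{2\mathbf a_s}$ have mean close to $1$. No domination of the off-diagonal Wick terms can change that.

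The correct finite bound is what the paper's argument actually establishes: it starts from \eqref{eq:inj_norm_bounded}, and the factor $\binom{d+k-1}{k}^p$ reappears explicitly in its asymptotic step. It reads
\[
\E\injnormC{T}\le\binom{d+k-1}{k}^{\frac{p}{2k}}\frac{1}{d^{p/2}}\Bigl((k!)^2\sum_{\mathbf a\in\N^R,\ \sum_s\mathbf a_s=k}\ \prod_{s=1}^R(\mathbf a_s!)^{p-2}\Bigr)^{\frac1{2k}}.
\]
Your fallback proves this bound: average over $u$ first, compare with the Gaussian case slot by slot as in Lemma \ref{lem:rank1-average}, then count pairs $(\mathbf s,\mathbf t)$ of equal content. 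That is precisely the paper's argument, and it should be your main line rather than a fallback.

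For the limit, your lower bound is the paper's, using the same test vector. Your upper bound is correct and genuinely simpler than the paper's. At fixed $k$ and $d\to\infty$ you get
\[
c_k=\sum_{\mathbf a}\binom{k}{\mathbf a}^{2-p}\le\binom{k+R-1}{R-1},
\]
where $\binom{k}{\mathbf a}=k!/\prod_s\mathbf a_s!$ and the inequality uses $p\ge2$. This is polynomial in $k$, so $c_k^{1/(2k)}\to1$.

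The paper instead couples $d=\lfloor\alpha k\rfloor$, needs the Stirling-type Lemma \ref{lem:double_bound_constrained_sum}, and optimizes over $\alpha$. The infimum is reached only as $\alpha\to\infty$, i.e.\ in the regime $d\gg k$ that you use directly, so the coupling buys nothing for this limit.

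You can even skip the $L^q$-convergence of Gram entries. Letting $d\to\infty$ at fixed $k$ in the corrected bound above, $\binom{d+k-1}{k}d^{-k}\to1/k!$ gives $c_k^{1/(2k)}$ at once. Note also that $c_k\ge1$ (take $\mathbf a=(k,0,\ldots,0)$), which is one more way to see that $\E Q_k(T)$ does not decay in $d$ and the binomial factor cannot be dropped.
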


\begin{remark}
    In the simple setting $R=1$, then $T=x^{(1)}_1\otimes \ldots\otimes x_p^{(1)}$ is a random tensor of rank $1$. Then for each realization of $T$ the injective norm is $$\injnorm{T}=\prod_{k=1}^p\left\langle x_k^{(1)},\frac{x_k^{(1)}}{\lnorm x_k^{(1)}\rnorm_2}\right\rangle=\prod_{k=1}^p\lnorm x_k^{(1)}\rnorm,$$ that is the product of norms of $p$ independent complex Gaussian vectors. It is a simple exercise to prove that $\lim_{d\to\infty}\E_T\injnorm{T}=1.$
\end{remark}

\begin{remark}\label{rk:comparison}
    It is tempting to compare the bound of Theorem \ref{thm:bounded_rank} to the bound of equation \eqref{eq:multivariate_bound} for $\eta_i=1$ in Theorem \ref{thm:asymmetric}. However, doing so requires putting the variance of the entries of the tensors on the same footing. Rescaling the entries so that the variance is set to $1$ in both cases indicates that the injective norm lives at the scale $\sqrt{dp\log p}$ in the Gaussian case, while in the bounded-rank case the scale is $\sqrt{d^p/R}.$
\end{remark}

\begin{remark}
    Models of this flavor have appeared several times before in the literature. Most prior results are for real symmetric analogues of our model, i.e. for $\sum_{j=1}^R (x^{(j)})^{\otimes p}$, where $\{x^{(j)}\}$ is a collection of random vectors in $\R^d$. These models have appeared under a variety of names, such as ``Wishart tensor''\footnote{\cite{bonnin2024universality} uses the name Wishart tensor for a different type of random tensors, and there is no standard use for this name yet. The case of our model is also a bit different because of the lack of symmetry, which forbids the $p=2$ case to become the Wishart random matrix case. } -- so-called since the matrix case $p = 2$ reduces to the classical Wishart \cite{wishart1928generalised} random matrix $W = GG^T$, where $G$ is the matrix whose columns are the random vectors $x^{(j)}$ -- and ``simple tensor.'' Papers dealing with such models include \cite{dhoyer2024limit,Zhi2024,AbdVer2026,AlGCheSan2025,CheSan2025}.
    
    Among these, the closest to \hyperlink{model:bc}{Model $B_\C$} is \cite{CheSan2025}. A special case of their Theorem 2.1 gives the following result: Let $R \in \N$ and let $x^{(i)}_1,\ldots,x^{(i)}_p$ be, for each $1 \leq i \leq R$, a family of $p$ independent vectors with i.i.d. centered sub-Gaussian entries with variance set to $1/d$, independent for different $i$'s. Consider the random tensor $T = \sum_{i=1}^R x^{(i)}_1 \otimes \cdots \otimes x^{(i)}_p$. Then there exists a sequence $(C_p)$ such that
    \[
        \E[\injnormR{T}] \leq C_p \frac{R}{d^{p/2}} \left[ \left( \frac{dp}{R}\right)^{1/2} + \frac{1}{R}(d+\log(R))^{p/2}\right],
    \]
    and if the entries are actually Gaussian, then there is an analogous lower bound with $C_p$ replaced by another sequence $(c_p)$. If $p$ and $R$ are fixed and $d \to \infty$, these bounds behave asymptotically like $C_p$ and $c_p$, respectively. Although the results are not directly comparable because they deal with different models, Theorem \ref{thm:bounded_rank} amounts to the statement that one can take $c_p = C_p = 1$ asymptotically in our analogue. However, \cite[Theorem 2.1]{CheSan2025} deals with the more general case where the $x^{(i)}_1$ variables live in some Hilbert space and have a covariance operator which is not necessarily identity, and allows for sub-Gaussian entries which are not necessarily rigidly sub-Gaussian.
\end{remark}

The proof relies on lemmas that we spell out in the following paragraphs. First we introduce a notation. For all $\mathbf{i}=(i_1,\ldots,i_k)\in [R]^k$ a multi-index, we denote $q(\mathbf{i})=(q_1(\mathbf{i}),\ldots,q_R(\mathbf{i}))$ the multiplicity vector of $\mathbf{i}$ whose entries are $q_j(\mathbf{i})=\#\{i_s:i_s=j\}$. For all $\mathbf{i}=(i_1,\ldots,i_k)\in [R]^k$ we denote $\{\mathbf{i}\}$ or $\{i_1,\ldots,i_k\}$ the corresponding multi-set. To avoid introducing too many notations, we also denote the associated multiplicity vector by $q(\mathbf{i})$.

\begin{lemma}\label{lem:rank1-average}
    Let $k\in \mathbb{Z}_+$ and $\mathbf{i}=(i_1,\ldots, i_k),\mathbf{j}=(j_1,\ldots, j_k)\in[R]^k$ be multi-indices. Let the family $\{x_c^{(i)}\}_{i\in [R],c\in [p]}$ be random vectors as in \hyperlink{model:bc}{Model $B_\C$}. One has, for all $u_c\in \mathbb{S}_\C^{d-1}$ unit norm complex vector,
    \begin{equation}
        \langle u_c^{\otimes k}\rvert\E_{\{x_c^{(i)}\}}\left(\lvert x_c^{(i_1)}\otimes \ldots\otimes x_c^{(i_k)}\rangle\langle x_c^{(j_1)}\otimes \ldots\otimes x_c^{(j_k)}\rvert\right)\lvert u_c^{\otimes k}\rangle\le\delta_{\{i_1,\ldots,i_k\},\{j_1,\ldots,j_k\}}\prod_{s=1}^Rq_s(\mathbf{i})!.
    \end{equation}
\end{lemma}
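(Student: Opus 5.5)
The plan is to expand the quadratic form entrywise and use independence together with circular symmetry of the entries of the vectors $x_c^{(i)}$, exactly in the spirit of Lemma \ref{lem:rigid-subgaussian-moment-bound}. Write $u=u_c$ and drop the color index $c$. The quantity to bound is
\[
\E\Bigl(\prod_{s=1}^k \langle u, x^{(i_s)}\rangle \,\overline{\prod_{s=1}^k\langle u,x^{(j_s)}\rangle}\Bigr)
\]
(up to the convention on which side carries the conjugate, which is immaterial). Group the factors by the vector index $r\in[R]$. Since the vectors $x^{(r)}$, $r\in[R]$, are independent, this expectation factorizes as
\[
\prod_{r=1}^R \E\bigl(\langle u,x^{(r)}\rangle^{q_r(\mathbf{i})}\,\overline{\langle u,x^{(r)}\rangle}^{\,q_r(\mathbf{j})}\bigr).
\]

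\textbf{Step 1 (vanishing unless the multisets agree).} Each $x^{(r)}$ has i.i.d.\ circularly symmetric entries, so $x^{(r)}\overset{\text{law}}{=}e^{i\theta}x^{(r)}$, and hence the scalar $Y_r=\langle u,x^{(r)}\rangle$ is circularly symmetric. Consequently $\E(Y_r^{a}\bar Y_r^{b})=e^{i\theta(a-b)}\E(Y_r^a\bar Y_r^b)$ for all $\theta$, which forces the expectation to vanish unless $a=b$. Therefore the product is zero unless $q_r(\mathbf{i})=q_r(\mathbf{j})$ for every $r$, i.e.\ unless $\{\mathbf{i}\}=\{\mathbf{j}\}$. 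This produces the Kronecker delta.

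\textbf{Step 2 (diagonal case).} When the multisets agree, the product becomes $\prod_r \E|Y_r|^{2q_r}$ with $q_r=q_r(\mathbf{i})$. It then suffices to show $\E|\langle u,x\rangle|^{2q}\le q!$ for a vector $x$ with i.i.d.\ $\C$-rigidly sub-Gaussian entries of variance $1/d$ and $u$ a unit vector. The plan is to rerun the argument of the complex case of Lemma \ref{lem:rigid-subgaussian-moment-bound} with $p=1$. Expand $|\langle u,x\rangle|^{2q}$ over balanced partitions of $[q]\sqcup[\bar q]$; by independence and circular symmetry only balanced blocks survive, and every coefficient appearing is a nonnegative product of factors $|u_{\ell}|^{|b|}$. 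Bounding each $\E|x_\ell|^{|b|}$ by the corresponding Gaussian moment, using rigid sub-Gaussianity, shows the total is at most the same expression for a complex Gaussian vector $g$ with variance-$1/d$ entries. For such $g$, $\langle u,g\rangle$ is complex Gaussian with variance $\|u\|^2/d=1/d$, so $\E|\langle u,g\rangle|^{2q}=q!/d^q\le q!$.

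This yields the stated bound $\prod_s q_s(\mathbf{i})!$; in fact it gives the sharper bound $\prod_s q_s(\mathbf{i})!/d^{q_s}$, with the powers of $d$ presumably tracked elsewhere, perhaps in the definition of the normalization.

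\textbf{Main obstacle.} The only delicate point is Step 2's combinatorial comparison, specifically checking that all coefficients in the partition expansion are nonnegative so that termwise domination is legitimate. The $p=1$ case of Lemma \ref{lem:rigid-subgaussian-moment-bound} already handles this: the relevant weights are $|u_\ell|^{|b|}$, which are nonnegative. So one can simply invoke that lemma with $p=1$, $d_1=d$, after rescaling so that the variance is $1/d$.
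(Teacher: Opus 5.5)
Your proof is correct and follows essentially the paper's route. You factorize over the independent vectors $x^{(r)}$, get the Kronecker delta from circular symmetry, and dominate each diagonal factor $\E|\langle u,x^{(r)}\rangle|^{2q_r}$ by its Gaussian counterpart through the balanced-partition expansion; appealing directly to the $p=1$ case of Lemma \ref{lem:rigid-subgaussian-moment-bound} is a tidy way to package that step. You are also right that the argument gives the sharper bound $\prod_s q_s(\mathbf{i})!\,d^{-q_s(\mathbf{i})}$, since the Gaussian value for variance-$1/d$ entries carries the factor $d^{-k}$, and this sharper form is what the proof of Theorem \ref{thm:bounded_rank} implicitly uses when it extracts the prefactor $d^{-pk}$.
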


\begin{proof}
Consider first a family of Gaussian vectors $\{y_c^{(i)}\}$ with i.i.d. circularly symmetric entries of variance $1/d$. Then, the proof that
$$\langle u_c^{\otimes k}\rvert\E_{\{y_c^{(i)}\}}\left(\lvert y_c^{(i_1)}\otimes \ldots\otimes y_c^{(i_k)}\rangle\langle y_c^{(j_1)}\otimes \ldots\otimes y_c^{(j_k)}\rvert\right)\lvert u_c^{\otimes k}\rangle=\delta_{\{i_1,\ldots,i_k\},\{j_1,\ldots,j_k\}}\prod_{s=1}^Rq_s(\mathbf{i})!$$
is a calculation. The key point to consider is that $y_c^{(i)},y_c^{(j)}$ are independent for $i\neq j$, so that
\begin{enumerate}[label=\alph*. , ref=\alph*.]
  \item the multi-indices $\mathbf{i},\mathbf{j}$ must be the same when seen as multisets $\{\mathbf{i}\}=\{\mathbf{j}\}$ for the expectation to be non zero.
  \label{itm:multisets-eq}
  \item the expectation factorizes over group of indices $i_s$ having the same value.
  \label{itm:factorizing-expectation}
\end{enumerate}
The rest is a consequence of properties of complex Gaussian vectors together with the fact that $\langle u_c,u_c\rangle=1$.\\
We are left to prove that 
\begin{multline*}
\langle u_c^{\otimes k}\rvert\E_{\{x_c^{(i)}\}}\left(\lvert x_c^{(i_1)}\otimes \ldots\otimes x_c^{(i_k)}\rangle\langle x_c^{(j_1)}\otimes \ldots\otimes x_c^{(j_k)}\rvert\right)\lvert u_c^{\otimes k}\rangle\\
\le \langle u_c^{\otimes k}\rvert\E_{\{y_c^{(i)}\}}\left(\lvert y_c^{(i_1)}\otimes \ldots\otimes y_c^{(i_k)}\rangle\langle y_c^{(j_1)}\otimes \ldots\otimes y_c^{(j_k)}\rvert\right)\lvert u_c^{\otimes k}\rangle.
\end{multline*}
This is done in the same way as in the proof of Lemma \ref{lem:rigid-subgaussian-moment-bound}. We compare rigid sub-Gaussian to Gaussian. In particular, because of point \ref{itm:factorizing-expectation} above, it is sufficient to prove that for all $q\in \mathbb{N}$, $u$ a uniform unit sphere vector, $x$ a vector with circularly symmetric, $\C$-rigidly sub-Gaussian i.i.d. entries of variance $1/d$, and $y$ a Gaussian vector with i.i.d. circularly symmetric entries of variance $1/d$, one has
$$
\langle u^{\otimes q}\lvert\E_{x}\left( \vert x^{\otimes q}\rangle\langle x^{\otimes q}\vert\right) \rvert u^{\otimes q}\rangle\le \langle u^{\otimes q}\lvert\E_{y}\left( \vert y^{\otimes q}\rangle\langle y^{\otimes q}\vert\right) \rvert u^{\otimes q}\rangle.
$$
Expanding the left hand side, one reaches
\begin{equation*}
    \langle u^{\otimes q}\lvert\E_{x}\left( \vert x^{\otimes q}\rangle\langle x^{\otimes q}\vert\right)\rvert u^{\otimes q}\rangle=\sum_{\substack{i_1,\ldots, i_p\in [d]\\
    j_1,\ldots, j_p\in [d]}}(u^{\otimes q})_{i_1,\ldots,i_p}(\bar{u}^{\otimes q})_{j_1,\ldots, j_p}\E_{x}\left(\prod_{\eta=1}^q x_{i_\eta}\bar{x}_{j_\eta}\right).
\end{equation*}
We can factorize the expectation $\E_{x}$ over independent components of $x$ by grouping those together. Formally, we write
\begin{align*}
    &\sum_{\substack{i_1,\ldots, i_p\in [d]\\
    j_1,\ldots, j_p\in [d]}}(u^{\otimes q})_{i_1,\ldots,i_p}(\bar{u}^{\otimes q})_{j_1,\ldots, j_p}\E_{x}\left(\prod_{\eta=1}^q x_{i_\eta}\bar{x}_{j_\eta}\right) \\
    &= \sum_{\pi\in \mathcal{P}_{\text bal}([q]\sqcup[\bar q])}\sum_{\substack{i_b\in [d]:b\in \pi\\i_b\neq i_{b'},\forall b\neq b'}}\prod_{b\in \pi}\lvert u_{i_b}\rvert^{\lvert b \rvert}\prod_{b\in \pi}\E_{x}\left( \lvert x_{i_b}\rvert^{\lvert b \rvert}\right),
\end{align*}
using the rigid sub-Gaussian property that $\E_{x}\left( \lvert x_{i_b}\rvert^{\lvert b \rvert}\right)\le \E_{y}\left( \lvert y_{i_b}\rvert^{\lvert b \rvert}\right)$ and repackaging the right-hand-side sum above with the replacement of $\E_{x}\left( \lvert x_{i_b}\rvert^{\lvert b \rvert}\right)$ by $\E_{y}\left( \lvert y_{i_b}\rvert^{\lvert b \rvert}\right)$, we obtain
$$\langle u^{\otimes q}\lvert\E_{x}\left( \vert x^{\otimes q}\rangle\langle x^{\otimes q}\vert\right)\rvert u^{\otimes q}\rangle\le\langle u^{\otimes q}\lvert\E_{y}\left( \vert y^{\otimes q}\rangle\langle y^{\otimes q}\vert\right) \rvert u^{\otimes q}\rangle, $$
as claimed and which suffices to prove the lemma.
\end{proof}
\begin{lemma}\label{lem:double_bound_constrained_sum}
 Let $p>2$ and $k\ge R+1$ be integers. Then one has
 \begin{equation}
     \left(\sum_{\mathbf{a}\in\N^R, \sum_s\mathbf{a}_s=k} \prod_{s=1}^R (\mathbf{a}_s!)^{p-2}\right)^{\frac1{2k}}\le e^{\frac12(p-2)(\log k-1)} \mathfrak{P}(k)
 \end{equation}
 where $\lim_{k\to \infty}\mathfrak{P}(k)=1.$
\end{lemma}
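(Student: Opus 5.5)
The plan is to squeeze the sum between trivial bounds: bound each summand by its largest value and count the summands. Write $S_k := \sum_{\mathbf{a}\in\N^R,\ \sum_s \mathbf{a}_s=k} \prod_{s=1}^R (\mathbf{a}_s!)^{p-2}$.

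\emph{Step 1 (the largest summand).} Since $p>2$, the exponent $p-2$ is positive. For nonnegative integers with $\sum_s \mathbf{a}_s = k$ we have $\prod_s \mathbf{a}_s! \le k!$, because the multinomial coefficient $k!/\prod_s \mathbf{a}_s!$ is at least $1$. Equality holds when one $\mathbf{a}_s$ equals $k$ and the others vanish, so each summand is at most $(k!)^{p-2}$.

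\emph{Step 2 (counting summands).} The number of compositions of $k$ into $R$ nonnegative parts is $\binom{k+R-1}{R-1}$. Hence
\[
S_k \le \binom{k+R-1}{R-1}\,(k!)^{p-2} \le (k+R)^{R-1}\,(k!)^{p-2}.
\]
The hypothesis $k\ge R+1$ only serves to keep $k$ in the regime where these crude bounds are clean; for instance it gives $(k+R)^{R-1}\le (2k)^{R-1}$.

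\emph{Step 3 (taking the $\frac{1}{2k}$-th root).} Use Stirling with an explicit error, $k! \le e\sqrt{k}\,(k/e)^k$ (valid for all $k\ge1$). This gives
\[
(k!)^{\frac{p-2}{2k}} \le e^{\frac12(p-2)(\log k - 1)}\,\bigl(e\sqrt{k}\bigr)^{\frac{p-2}{2k}}.
\]
Then define
\[
\mathfrak{P}(k) := \bigl(e\sqrt{k}\bigr)^{\frac{p-2}{2k}}\,(2k)^{\frac{R-1}{2k}}.
\]
Its logarithm is $O\!\left(\frac{(p+R)\log k}{k}\right)$, which tends to $0$ as $k\to\infty$ with $p,R$ fixed. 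Therefore $\mathfrak{P}(k)\to1$, which is the claimed bound.

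\emph{Main obstacle.} None of the steps is deep. The only real care is making sure the Stirling error and the polynomial count of summands are both absorbed into $\mathfrak{P}(k)$ at the scale $\frac{1}{2k}$. This works because both are subexponential in $k$.

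The lemma is presumably used with $k$ tied to $d$ in \eqref{eq:finite_d_k_bounded_rank}. In the product $(k!)^{1/k}\cdot e^{\frac12(p-2)(\log k-1)}/d^{p/2}$, the leading factor is then $\approx (k/e)^{p/2}/d^{p/2}$. So one should take $k\approx ed$ to get the limit $1$. That is a concern for the subsequent proof, not for this lemma.
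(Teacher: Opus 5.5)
Your proof is correct, and it takes a more direct route than the paper. Both arguments are at heart ``largest term times number of terms,'' but the paper reaches that bound the long way:
\begin{itemize}
\item it splits the sum according to the number $m$ of nonzero parts;
\item it applies a Stirling-type bound to each factor $\mathbf{a}_s!$ separately;
\item it rescales $\mathbf{a}=kx$ onto lattice points of the simplex and bounds the resulting exponent $\sum_s(x_s+\frac{1}{2k})\log(x_s+\frac1k)$ by $2\log(1+\frac1k)$, which is essentially its value at a vertex;
\item finally it counts lattice points and sums crudely over $m$.
\end{itemize}
Your multinomial inequality $\prod_s\mathbf{a}_s!\le k!$ locates the maximal term in one line. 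You need the elementary bound $k!\le e\sqrt{k}(k/e)^k$ only once, and you use $k\ge R+1$ only cosmetically. You also get an explicit $\mathfrak{P}(k)$ with a smaller polynomial prefactor than the paper's, which carries $k^{R(p-2)/2}$ and a count of order $R!\,k^{R}$. Your route also sidesteps the paper's per-factor Gamma bookkeeping. There, the displayed prefactor $\sqrt{1/(2\pi z)}$ should read $\sqrt{2\pi/z}$; this is harmless, since powers of $2\pi$ disappear under the $\frac{1}{2k}$-th root. The paper's simplex parametrization would be the natural starting point for a sharper, Laplace-type evaluation of the sum. As executed, though, it also just bounds the exponent by its maximum, so for this lemma it buys nothing over your argument. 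Your $\mathfrak{P}$ is a drop-in replacement in the proof of Theorem \ref{thm:bounded_rank}.

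One correction to your closing aside, which is not part of the lemma. In the proof of Theorem \ref{thm:bounded_rank}, the lemma is combined with the factor $\binom{d+k-1}{k}^{p/(2k)}$ coming from \eqref{eq:inj_norm_bounded}. The displayed bound \eqref{eq:finite_d_k_bounded_rank} omits this factor; without it, $k=1$ would give $\sqrt{R}/d^{p/2}\to0$, contradicting the lower bound $1$. With that factor and $d=\lfloor\alpha k\rfloor$, the bound tends to $\bigl(e^{-1}(1+\frac{1}{\alpha})^{1+\alpha}\bigr)^{p/2}$. This is strictly larger than $1$ for every finite $\alpha$ and approaches $1$ only as $\alpha\to\infty$, i.e.\ for $k=o(d)$. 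Your choice $k\approx ed$ corresponds to $\alpha=1/e$ and gives roughly $2.2^{p/2}$.
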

\begin{proof}
Start by splitting the sum according to the number of nonzero $\mathbf{a}_s$ values into 
\begin{equation*}
    \sum_{\mathbf{a}\in\N^R, \sum_s\mathbf{a}_s=k} \prod_{s=1}^R (\mathbf{a}_s!)^{p-2}=\sum_{m=1}^R\binom{R}{m}C^{(p)}_{m,k}, \quad C^{(p)}_{m,k}:=\sum_{\substack{\mathbf{a}\in (\N^*)^m, \sum_s\mathbf{a}_s}=k}\prod_{s=1}^m(\mathbf{a}_s!)^{p-2}.
\end{equation*}
We have using \cite[equation (76) \& equation (112)]{Gronwall1918-Gamma} 
\begin{equation*}
       \Gamma(z)<\sqrt{\frac1{2\pi z}}\left( \frac{z}{e}\right)^ze^{\frac1{12z}},
\end{equation*}
so that
\begin{align*}
    C^{(p)}_{m,k}&\le \sum_{\substack{\mathbf{a}\in (\N^*)^m, \sum_s\mathbf{a}_s}=k}\prod_{s=1}^m\left[\left(\sqrt{\frac1{2\pi (\mathbf{a}_s+1)}}\right)^{p-2}e^{(p-2)[(\mathbf{a}_s+1)\log(\mathbf{a}_s+1)-\mathbf{a}_s-1]+\frac{(p-2)}{12(\mathbf{a}_s+1)}}\right].
\end{align*}
For each value of $m\in \{1,\ldots,R\}$, change variables to $\mathbf{a}=k x,$ with $x\in \Delta_m^{\text{int},k}\subset \Delta_m$ a probability vector living in the subset  $\Delta_m^{\text{int},k}$ of the standard simplex $\Delta_m$, defined as 
$$\Delta_m^{\text{int},k}:=\{x\in \Delta_m: \forall s \in [m], \exists i_s\in \mathbb{N}^*, x_s=i_s/k\}.$$ Hence,
\begin{align*}
C^{(p)}_{m,k}&\le \sum_{x\in \Delta_m^{\text{int},k}}\prod_{s=1}^m\left[\left(\sqrt{\frac1{2\pi (k x_s+1)}}\right)^{p-2}e^{(p-2)[(kx_s+1)\log(kx_s+1)-kx_s-1]+\frac{(p-2)}{12(kx_s+1)}}\right]\\
&=(2\pi)^{-\frac{m}{2}(p-2)}\sum_{x\in \Delta_m^{\text{int},k}}e^{(p-2)k(\log(k)-1)}e^{(p-2)\frac{m}{2}\log(k)}e^{(p-2)\sum_{s=1}^m\left[\left(kx_s+\frac12\right)\log(x_s+\frac1k)-1+\frac1{12(kx_s+1)}\right]}.
\end{align*}
Note then that
\begin{align*}
    &\sum_{s=1}^m\left[(kx_s+1/2)\log\left(x_s+\frac1k\right)-1+\frac1{12(kx_s+1)}\right] \\
    &\le k\sum_{s=1}^m\left(x_s+\frac1{2k}\right)\log\left(x_s+\frac1{k} \right)\le 2k\log\left(1+\frac1{k}\right)
\end{align*}
which holds since $m \leq R \leq k$. Hence, by a simple count of probability vectors in $\Delta_m^{\text{int},k}$,
\begin{align*}
C^{(p)}_{m,k}&\le(2\pi)^{-\frac{m}{2}(p-2)}e^{(p-2)k\left[\log(k)-1+2\log(1+1/k)\right]}e^{\frac{m}{2}(p-2)\log(k)}\binom{k-1}{m-1}.
\end{align*}
We get 
\begin{align*}
    \sum_{m=1}^R\binom{R}{m}C^{(p)}_{m,k}&\le \sum_{m=1}^R(2\pi)^{-\frac{m}{2}(p-2)}e^{(p-2)k\left[\log(k)-1+2\log(1+1/k)\right]}e^{\frac{m}{2}(p-2)\log(k)}\binom{R}{m}\binom{k-1}{m-1}\\
    &\le(2\pi)^{-\frac{1}{2}(p-2)}e^{(p-2)k\left[\log(k)-1+2\log(1+1/k)\right]}e^{\frac{R}{2}(p-2)\log(k)} R!\frac{(k-1)^{R+1}-(k-1)}{k-2}
\end{align*}
where we coarsely bound: 
$$(2\pi)^{-\frac{m}{2}(p-2)}\le (2\pi)^{-\frac{1}{2}(p-2)},\ e^{\frac{m}{2}(p-2)\log(k)}\le e^{\frac{R}{2}(p-2)\log(k)},\ \binom{R}{m}\le R!,\ \binom{k-1}{m-1}\le (k-1)^m.$$
Hence, defining 
\[
    \mathfrak{P}(k):=\left(R!(2\pi)^{-\frac{1}{2}(p-2)}e^{2(p-2)k\log(1+1/k)}e^{\frac{R}{2}(p-2)\log(k)}\frac{(k-1)^{R+1}-(k-1)}{k-2}\right)^{1/2k},
\]
one has
$$\left(\sum_{\mathbf{a}\in\N^R, \sum_s\mathbf{a}_s=k} \prod_{s=1}^R (\mathbf{a}_s!)^{p-2}\right)^{1/2k}\le e^{\frac{1}{2}(p-2)(\log(k)-1)}\mathfrak{P}(k).$$
It is a simple exercise to check that $\lim_{k\to\infty}\mathfrak{P}(k)=1,$ proving the claim.
\end{proof}

We are now ready to come to the proof of Theorem \ref{thm:bounded_rank}.
\begin{proof}[Proof of Theorem \ref{thm:bounded_rank}.]
We use the upper bound on the injective norm of Theorem \ref{thm:deterministic_bound}, equation \eqref{eq:inj_norm_bounded}, which we average over $T$ for $T$ a random tensor of the bounded-rank \hyperlink{model:bc}{Model $B_\C$}. We therefore have to compute
\begin{multline*}
\E_T\E_{u_i}\left( \lvert \langle T,u_1\otimes \ldots\otimes u_p \rangle\rvert^{2k} \right)=\\
\sum_{\substack{\mathbf{i}=(i_1,\ldots, i_k)\in[R]^k\\ \mathbf{j}=(j_1,\ldots, j_k)\in[R]^k}}\prod_{c=1}^p\E_{u_c}\E_T(\langle u_c^{\otimes k},x_c^{(i_1)}\otimes \ldots\otimes x_c^{(i_k)}\rangle\langle x_c^{(j_1)}\otimes \ldots\otimes x_c^{(j_k)},u_c^{\otimes k}\rangle )
\end{multline*}
which we reduce by Lemma \ref{lem:rank1-average} to 
\begin{multline*}
    \E_T\E_{u_i}\left( \lvert \langle T,u_1\otimes \ldots\otimes u_p \rangle\rvert^{2k} \right)\le\frac1{d^{pk}}\sum_{\substack{\mathbf{i}=(i_1,\ldots, i_k)\in[R]^k\\ \mathbf{j}=(j_1,\ldots, j_k)\in[R]^k}}\delta_{\{i_1,\ldots,i_k\},\{j_1,\ldots,j_k\}}\prod_{s=1}^R(q_s(\mathbf{i})!)^p\\
    =\frac1{d^{pk}}\sum_{\substack{\mathbf{i}=(i_1,\ldots, i_k)\in[R]^k}} \frac{k!}{\prod_{s=1}^Rq_s(\mathbf{i})!}\prod_{s=1}^R(q_s(\mathbf{i})!)^p
    =\frac{(k!)^2}{d^{pk}}\sum_{\mathbf{a}\in\N^R, \sum_s\mathbf{a}_s=k} \prod_{s=1}^R (\mathbf{a}_s!)^{p-2}.
\end{multline*}
Taking the $(2k)$\textsuperscript{th} root and Jensen's inequality proves the first claim. Additionally, by Theorem \ref{thm:deterministic_bound} and Lemma \ref{lem:double_bound_constrained_sum}, we have for all $k\ge R+1$,
\begin{equation*}
  \E_T\injnormC{T}\le \left[(k!^2)\binom{d+k-1}{k}^p\right]^{\frac1{2k}}\frac{1}{d^{p/2}}e^{\frac12(p-2)(\log k-1)}\mathfrak{P}(k).  
\end{equation*}
We now set $d=\lfloor\alpha k\rfloor$ for $\alpha>0$ and compute the limit of the left hand side as $k\to \infty.$ Minimizing the result over $\alpha$, this leads to 
\begin{equation}
    \lim_{d\to \infty}\E_T\injnorm{T}\le \inf_{\alpha\in \R_+} \frac1{e^{p/2}}\left( \alpha^{-\frac12 (\alpha+1)}(1+\alpha)^{\frac12(\alpha+1)}\right)^p.
\end{equation}
The infimum over $\alpha$ is easily shown to be $1$, achieved as $\alpha \to +\infty$, finishing the proof of the upper bound.\\

For a matching lower bound, we first notice that, for any $T=\sum_{i=1}^Rx_1^{(i)}\otimes \ldots\otimes x_p^{(i)}$, we can lower-bound the injective norm by testing against the normalized version of the first summand in $T$:
\begin{align*}
    \E[\injnormC{T}] &\geq \E\left[ \left\lvert \left\langle T,\frac{x_1^{(1)}}{\lnorm x_1^{(1)} \rnorm_2}\otimes \ldots\otimes \frac{x_p^{(1)}}{\lnorm x_p^{(1)} \rnorm_2}\right\rangle\right\rvert\right] \geq \abs{ \E\left[\left\langle T,\frac{x_1^{(1)}}{\lnorm x_1^{(1)} \rnorm_2}\otimes \ldots\otimes \frac{x_p^{(1)}}{\lnorm x_p^{(1)} \rnorm_2}\right\rangle\right] }. 
\end{align*}
Since we assumed that rigidly sub-Gaussian variables have symmetric distributions, the random variables $\langle x^{(i)}_k,x^{(1)}_k \rangle/\|x^{(1)}_k\|_2$ are centered for $i > 1$ and independent for different $k$ values; thus
\[
    \E\left[\left\langle T,\frac{x_1^{(1)}}{\lnorm x_1^{(1)} \rnorm_2}\otimes \ldots\otimes \frac{x_p^{(1)}}{\lnorm x_p^{(1)} \rnorm_2}\right\rangle\right] = \E\left[\prod_{k=1}^p \|x^{(1)}_k\|_2 + \sum_{i=2}^R \prod_{k=1}^p \ip{x^{(i)}_k, \frac{x^{(1)}_k}{\|x^{(1)}_k\|_2}} \right] = \E[\|x^{(1)}_1\|_2]^p.
\]
If $x$ is a random vector of length $d$ with i.i.d. complex entries of mean zero and variance $1/d$, for any $\epsilon > 0$ we have
\[
    \E[\|x\|_2] \geq \E[\|x\|_2\mathds{1}_{\|x\|_2^2 \geq (1-\epsilon)^2}] \geq (1-\epsilon)\P(\|x\|_2^2 \geq (1-\epsilon)^2).
\]
From the weak law of large numbers, we conclude that the latter probability tends to one as $d \to +\infty$; taking $\epsilon \to 0$, we find $\liminf_{d \to \infty} \E[\|x\|_2] \geq 1$. Thus $\liminf_{d \to \infty} \E[\injnormC{T}] \geq 1$, which finishes the proof.
\end{proof}


\appendix


\section{Numerics}
\label{app:numerics_experiment}
In this appendix we show plots illustrating the results of numerical experiments for the expected injective norm of random tensors of large local dimension $d$. All the numerics were run on a Mac mini M4 Pro, with 24gb of RAM, within a few hours of runtime. On Figure \ref{fig:steinhaus-tensor} we show the case of full tensors with i.i.d. Steinhaus entries, so as to illustrate a case from \hyperlink{model:ak}{Model $A_\K$}.

\begin{figure}[H]
    \centering
    \includegraphics[scale=0.90]{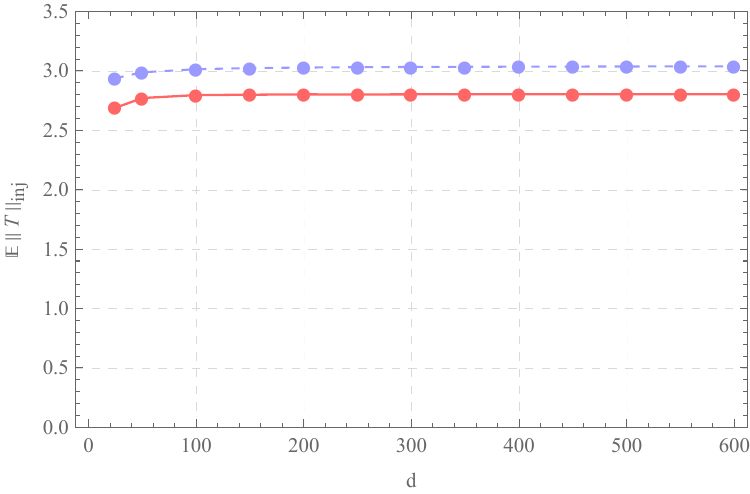}
    \caption{The above plot superimposes: in red and solid lines, the result of computing (a lower bound to) the injective norm by gradient ascent for Steinhaus random tensors for various values of the dimension (the average result over $64$ realizations for each value of the local dimension $d$); and, in blue and dashed lines, the corresponding optimal non-asymptotic upper bound (see Appendix \ref{app:non-asymptotic-upper-bound} and Proposition \ref{prop:finite}).}
    \label{fig:steinhaus-tensor}
\end{figure}
 On Figure \ref{fig:raw-R=3-analytic-finite-asymptotic-vs-numerics} we plot, at rank $R = 3$ for \hyperlink{model:bc}{Model $B_\C$} with Gaussian $x_c^{(i)}$, the numerics obtained via a gradient ascent algorithm and the minimal value over $k$ of the finite $d,k$ bounds of equation \eqref{eq:finite_d_k_bounded_rank} found numerically.  Moreover we also show the limiting $d\to\infty$ bound \eqref{eq:asymptotic_bound_bounded_rank} obtained from Theorem \ref{thm:bounded_rank}. 
\begin{figure}[H]
    \centering
    \includegraphics[scale=0.70]{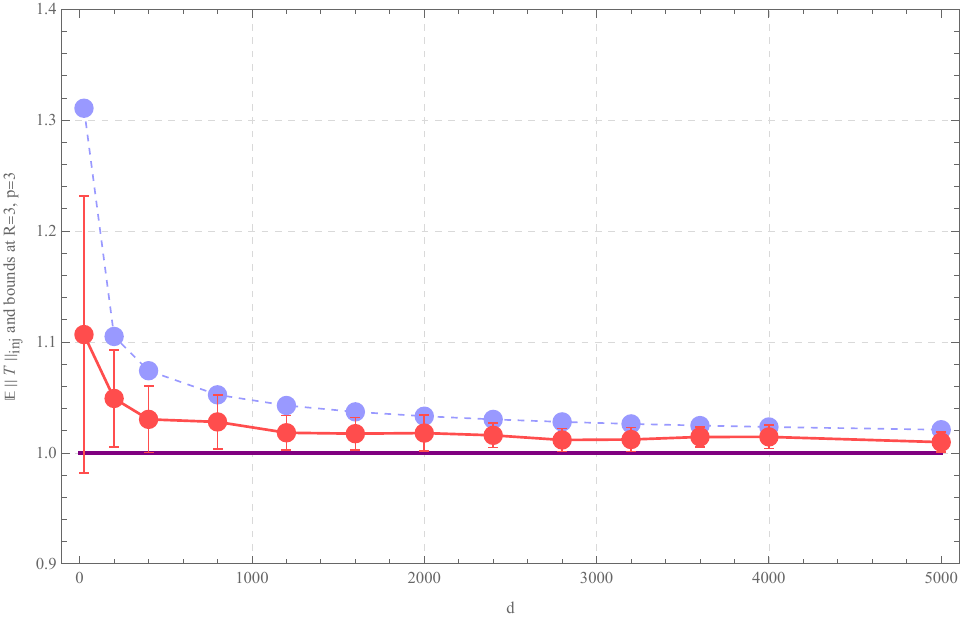}
    \caption{The above plot displays, for each value of $d$: in dashed lines and light blue, the values of the finite non asymptotic upper bounds; in solid lines and light red, the numerics over $40$ realizations and $35$ restarts. It also shows the limiting expectation of \eqref{eq:asymptotic_bound_bounded_rank} as the purple line.  
    }
    \label{fig:raw-R=3-analytic-finite-asymptotic-vs-numerics}
\end{figure}
\begin{figure}[H]
    \includegraphics[scale=0.81]{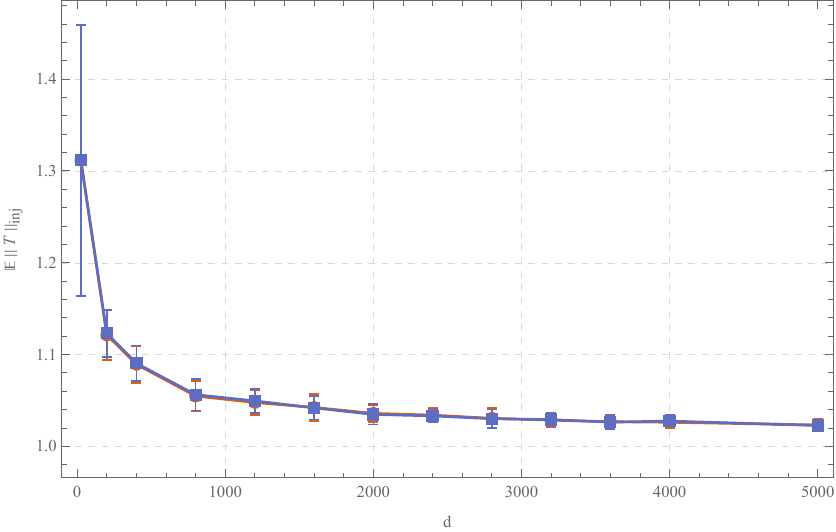}
    \caption{Numerics for the average injective norm of random bounded-rank tensors at rank $R=25$. Two methods are compared: alternating least squares (red) and (noisy projected) gradient ascent method (blue). Bars show the standard deviations. The two methods obtain nearly identical values, so the red data is almost hidden behind the blue data. For each value of the dimension the average of the results is computed over $40$ realizations. Each algorithm uses $35$ restarts to improve the quality of the estimates.\label{fig:pure_num_R25}}
\end{figure}
On Figure \ref{fig:pure_num_R25}, we plot the results of numerics, using two different optimization methods (alternating least squares and gradient method) at the rank $R=25$ for \hyperlink{model:bc}{Model $B_\C$} with Gaussian $x_c^{(i)}$. Together with Figure \ref{fig:raw-R=3-analytic-finite-asymptotic-vs-numerics} this illustrates, given our choice of normalization, the absence of dependence of the limiting value on the rank\footnote{We recall however that the variance carries a rank dependence.} as $d\to \infty$. 


\section{Rank of \texorpdfstring{$T$}{T} in the bounded-rank model}\label{app:rank_property}
\noindent In this appendix we prove Proposition \ref{prop:rank_property}. A good textbook resource for the definitions and mathematics needed is \cite{landsberg2011tensors} with a focus on chapter 5. We recall the definitions we use in our proof below. \\

\noindent The rank is defined as follows (slightly simplified for our context).
\begin{definition}
\label{def:rank}
    If $T\in (\C^d)^{\otimes p}$, then the rank of $T$ is 
    $$\mathrm{rank}\, T:=\min\{r\in \N^*:\exists x_k^{(j)}\in \C^d, 1\le k\le p, 1\le j\le r, T = \sum_{j=1}^rx^{(j)}_1\otimes\ldots\otimes x_p^{(j)}\}.$$
\end{definition}
\noindent The proof uses the classical geometric machinery to define and study the tensor rank, which we recall in part here. Let $N=d^p-1$ and denote by $\P^N$ the complex projective space of dimension $N$. Let $T\in (\C^d)^{\otimes p}$ be a tensor. We denote by $\langle T\rangle$ its class in $\P^N$. 
\begin{definition}[Segre embedding]
    We call the canonical embedding $\sigma:(\P^{d-1})^{\times p}\to \P^N$, the Segre embedding. Let $s_1,\ldots,s_p\in \P^{d-1}$. In projective coordinates $s_i=\langle (s_i)_1:\ldots: (s_i)_p\rangle$ one lets, 
    $$\sigma(s_1,\ldots,s_p)=\langle (s_1)_1(s_2)_1\ldots (s_p)_1: (s_1)_1(s_2)_1\ldots (s_p)_2: \ldots:(s_1)_{d}(s_2)_d\ldots (s_p)_d\rangle.$$
    We denote by $\text{Seg}_p((\P^{d-1})^{\times p}):=\sigma((\P^{d-1})^{\times p}).$
\end{definition}
$\text{Seg}_p((\P^{d-1})^{\times p})$ is the space of separable states, or up to an overall scalar, it is the space of rank $1$ tensors. The spaces of tensors of rank at most $r$ are formalized as the $r$-secant varieties of $\text{Seg}_p((\P^{d-1})^{\times p})$, which we now define.
\begin{definition}[See \cite{landsberg2011tensors}]
  The join $J(X,Y)$ of two varieties $X,Y\subset \P^N$ is the Zariski closure of the set of lines between any point $x\in X, Y\in Y$, that is $J(X,Y)=\overline{\bigcup_{\substack{(x,y)\in X\times Y\\ x\neq y}}\P_{x,y}^1}$. The join of $k$ varieties $X_1,\ldots,X_k\subset \P^N$ is defined by induction $J(X_1,\ldots,X_k):=J(X_1,J(X_2,\ldots,X_k)).$ The $k-$secant variety of a variety $X$ is $\sigma_k(X):=J(\underbrace{X,\ldots,X}_{k \text{ times}}).$
\end{definition}
From \cite[definition 5.2.1.1]{landsberg2011tensors} and comments below it, we extract that $\text{rank }T = \min\{r\in \N^*:\langle T \rangle\in \sigma_r(\text{Seg}_p((\P^{d-1})^{\times p}))\}.$ This gives a geometric characterization of the rank of a tensor that is suited to what we want to show.
\begin{remark}\label{rk:proper_inclusion_secants}
Note that by construction $\sigma_{r-1}(\text{Seg}_p((\P^{d-1})^{\times p}))\subseteq \sigma_{r}(\text{Seg}_p((\P^{d-1})^{\times p}))$. However, by \cite[Proposition 5.3.1.6]{landsberg2011tensors} one has for $R<d$, 
$$\dim \sigma_{R-1}(\text{Seg}_p((\P^{d-1})^{\times p}))<\dim \sigma_R(\text{Seg}_p((\P^{d-1})^{\times p})),$$ 
hence $\sigma_{R-1}(\text{Seg}_p((\P^{d-1})^{\times p}))\subsetneq \sigma_{R}(\text{Seg}_p((\P^{d-1})^{\times p})).$
\end{remark}
\begin{proof}[Proof of Proposition \ref{prop:rank_property}]
We have the necessary ingredients to prove Proposition \ref{prop:rank_property}. Let $T$ be a tensor of the bounded-rank model. Clearly, by  construction $\langle T\rangle\in \sigma_R(\text{Seg}_p((\P^{d-1})^{\times p}))$. We have to show that $\P(\langle T\rangle \in \sigma_{R-1}(\text{Seg}_p((\P^{d-1})^{\times p})))=0.$ To this aim we note that 
$$\Phi: ((\C^d)^{\times p})^{R}\to \C^{d^p}, \quad \{x_k^{(i)}\}_{k,i}\mapsto T=\sum_{i=1}^R x_1^{(i)}\otimes \ldots \otimes x_p^{(i)},$$
is a polynomial map with $\dim \text{Im}(\Phi)=Rp(d-1)+R$. 

From remark \ref{rk:proper_inclusion_secants}, $\{T: \langle T\rangle \in \sigma_{R-1}(\text{Seg}_p((\P^{d-1})^{\times p})) \}$ is an algebraic cone strictly included in $\{T: \langle T\rangle \in \sigma_{R}(\text{Seg}_p((\P^{d-1})^{\times p})) \}$. Hence, the pre-image $\Phi^{-1}(\{T: \langle T\rangle \in \sigma_{R-1}(\text{Seg}_p((\P^{d-1})^{\times p})) \})$ has Lebesgue measure zero in $((\C^d)^{\times p})^{R}$ and so $$\P(\langle T\rangle \in \sigma_{R-1}(\text{Seg}_p((\P^{d-1})^{\times p})))=0.$$
\end{proof}


\section{Application: Numerically efficient non-asymptotic bounds}\label{app:non-asymptotic-upper-bound}

In previous sections, we gave some examples of how to use Theorem \ref{thm:deterministic_bound} to obtain analytic upper bounds for $\E[\injnorm{T}]$ in some limiting sense, for example when $T \in (\R^d)^{\otimes p}$ and either $d \to \infty$ or $p \to \infty$. However, Theorem \ref{thm:deterministic_bound} can also be used to give non-asymptotic bounds on $\E[\injnorm{T}]$ for such a tensor when $d$ and $p$ are both fixed. The purpose of this section is to illustrate how this can be done in a numerically efficient way.

For concreteness, we discuss the case where $T$ comes from \hyperlink{model:ak}{Model $A_\C$}, for simplicity with $d_1 = \cdots = d_p = d$. Combining \eqref{eq:inj_norm_bounded} and Lemma \ref{lem:rigid-subgaussian-moment-bound} we obtain 
\[
    \E[\injnormC{T}] \leq \E[\injnormC{T}^{2k}]^{\frac{1}{2k}} \leq \frac{1}{\sqrt{d}} \left( k! \binom{d+k-1}{k}^p \right)^{\frac{1}{2k}}
\]
for each $k$, and thus one can bound $\E[\injnormC{T}]$ for any fixed $d$ and $p$ simply by taking the infimum of
\[
    M_{d,p}(k) \defeq \frac{1}{\sqrt{d}} \left( k! \binom{d+k-1}{k}^p \right)^{\frac{1}{2k}}
\]
over \emph{discrete} $k \in \{0, 1, 2, \ldots\}$. (The proofs in Section \ref{sec:application_asymmetric} are based on studying asymptotics of this formula, but one can also view it non-asymptotically.) A priori, this could require arbitrarily many evaluations of different $k$ values. However, the following result gives a theoretical upper bound on the required number of evaluations, which depends on $d$ and $p$ in a computationally efficient way.

\begin{proposition}
\label{prop:finite}
For each fixed $d \geq 2$ and $p \geq 3$, $\inf_{k \in \N} M_{d,p}(k)$ is achieved at some $k \in \{1, \ldots, 2pd\log(pd)+1\}$. 
Furthermore, $\inf_{k \in \N} M_{d,p}(k)$ can be found in $O(\log(pd))$ evaluations of the expression $M_{d,p}(k)$.
\end{proposition}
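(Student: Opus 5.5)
The approach is to work with $\log M_{d,p}(k)$ and show that, as a function of $k$, it decreases and then increases. Write
\[
\log M_{d,p}(k) + \tfrac12\log d = \frac{1}{2k} g(k), \qquad g(k) \defeq \log k! + p\log\binom{d+k-1}{k} = \sum_{j=1}^k a_j,
\]
where
\[
a_j \defeq \log j + p\log\Bigl(1+\frac{d-1}{j}\Bigr).
\]
So minimizing $M_{d,p}$ over $k\ge 1$ amounts to minimizing the running average $A_k \defeq g(k)/k$ of the sequence $(a_j)$. (The value $k=0$ is degenerate and is discarded.)

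\textbf{Step 1: the sequence $(a_j)$ is quasi-convex.} Treating $j$ as a real variable,
\[
\frac{d}{dj}\, a_j = \frac{1}{j} - \frac{p(d-1)}{j(j+d-1)} = \frac{j-(p-1)(d-1)}{j(j+d-1)} .
\]
Hence $(a_j)$ is nonincreasing for $j\le (p-1)(d-1)$ and nondecreasing afterwards.

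\textbf{Step 2: the averages $A_k$ are unimodal.} We have
\[
A_{k+1}-A_k = \frac{a_{k+1}-A_k}{k+1}.
\]
Suppose $a_{k+1}\ge A_k$ for some $k$. Then $k+1$ must lie in the nondecreasing region: in the decreasing region $a_{k+1}$ is strictly below every $a_j$ with $j\le k$, hence strictly below $A_k$. Also $A_{k+1}$ is a convex combination of $A_k$ and $a_{k+1}$, so $A_{k+1}\le a_{k+1}\le a_{k+2}$. By induction, once the increment $A_{k+1}-A_k$ becomes nonnegative it stays nonnegative. Therefore $k\mapsto M_{d,p}(k)$ is nonincreasing and then nondecreasing.

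This yields the algorithmic claim. Given an a priori window $\{1,\dots,K\}$ containing a minimizer, a binary search on the sign of $M_{d,p}(k+1)-M_{d,p}(k)$ locates the minimizer in $O(\log K)$ evaluations. With $K = 2pd\log(pd)+1$ this is $O(\log(pd))$ evaluations.

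\textbf{Step 3: the window.} It remains to show that $a_{K+1}\ge A_K$ for $K=\lceil 2pd\log(pd)\rceil$; by Step 2 this forces the minimum to occur at some $k\le K$.

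For the lower bound, discard the nonnegative second term of $a_{K+1}$ to get $a_{K+1}\ge \log(K+1)$.

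For the upper bound, use the Stirling-type estimate $\log K!\le K\log K - K + \tfrac12\log(2\pi K)+\tfrac1{12K}$, together with $\log(1+x)\le x$ and $\sum_{j\le K} 1/j\le 1+\log K$. These give
\[
A_K \le \log K - 1 + \frac{\log(2\pi K)+1}{2K} + \frac{p(d-1)(1+\log K)}{K}.
\]
It therefore suffices to check
\[
1 \ge \frac{p(d-1)(1+\log K)}{K} + \frac{\log(2\pi K)+1}{2K}.
\]
With $K\approx 2pd\log(pd)$, the first term is about $\frac{(d-1)(1+\log K)}{2d\log(pd)}$. Since $\log K = \log(pd) + \log(2\log(pd))$, this is at most roughly $\tfrac12(1+o(1))$, and the second term is tiny.

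The main obstacle is making this last inequality hold uniformly for all $d\ge 2$, $p\ge 3$, not just asymptotically. I would bound $\log K\le \log(pd)+\log(2\log(pd)+1)$ explicitly, reduce to a one-variable inequality in $x=pd\ge 6$, and verify it for $x\ge 6$ by monotonicity plus a check at $x=6$. If the constant turns out to be too tight near $x=6$, I would handle the few small pairs $(d,p)$ by direct evaluation. Finally, rounding $K$ to an integer costs at most the $+1$ in the stated window.
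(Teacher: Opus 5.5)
Your argument is correct, and it takes a genuinely different route from the paper's. The paper works with the continuous relaxation $g_{d,p}(x)=G(x)/x$, where $G(x)=\log\Gamma(x+1)+p\log\frac{\Gamma(d+x)}{\Gamma(d)\Gamma(x+1)}$. It studies $h_{d,p}=x^2g_{d,p}'=xG'-G$, whose derivative is $\ell_{d,p}=xG''$. Showing that $G''$ changes sign exactly once takes polygamma inequalities and a three-case analysis ($p\ge 4$; $p=3$, $d\ge 3$; $p=3$, $d=2$). The critical point is then located by integrating a lower bound on $\ell_{d,p}$ and solving with $\mathcal{W}_{-1}$, and the integer minimizer is finally $\lfloor k_\ast\rfloor$ or $\lfloor k_\ast\rfloor+1$.

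You never leave the integers. Your $a_j=G(j)-G(j-1)$ is the discrete counterpart of $G'$, but it is an elementary function, so ``$G$ concave then convex'' becomes the sign of a single rational function. Your running-average lemma replaces the unique-zero argument for $h_{d,p}$. The window then comes from one inequality $a_{K+1}\ge A_K$, checked with Stirling and harmonic-sum bounds instead of Lambert-function estimates.

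What you gain is a short, special-function-free proof that gives unimodality of $k\mapsto M_{d,p}(k)$ on $\N^\ast$ directly. This is exactly what the bisection needs, and it holds for all $d,p$; the hypothesis $p\ge 3$ only enters your window check. What the paper gains is more structural information: unimodality of the continuous extension, and an explicit Lambert-$\mathcal{W}$ location of the minimizer of order $p(d-1)\log(p(d-1))$, which it can refine iteratively. Your method would also give a comparable window with a smaller choice of $K$.

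On your last step: with the crude bound $p(d-1)\le pd$, the reduced one-variable inequality fails for small $pd$ (its right-hand side exceeds $1$ at $pd=6$). So the finite check of small pairs you mention would really be needed. If you keep $p(d-1)=pd-p\le pd-3$ instead, the right-hand side stays below about $0.9$ for every admissible pair, so there is ample room.
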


\begin{proof}[Proof of Proposition \ref{prop:finite}]
Notice that, as a function of \emph{continuous} $k$, the quantity $M_{d,p}(k)$ can be rewritten as 
\[
    M_{d,p}(k) = \frac{1}{\sqrt{d}} \exp\left[ \frac{1}{2k} \left( \log(\Gamma(k+1)) + p\log\left(\frac{\Gamma(d+k)}{\Gamma(d)\Gamma(k+1)}\right) \right) \right] =: \frac{1}{\sqrt{d}} \exp\left(\frac{1}{2}g_{d,p}(k)\right).
\]
Suppose we can show that $g_{d,p}$ has a unique minimum over continuous $k > 0$, achieved at some $k_\ast$, for which we can give an explicit upper bound $k_\ast \leq k_u$. Then (a) $\argmin_{k \in \N} g_{d,p}(k) = \argmin_{k \in \N} M_{d,p}(k)$ must be either $k_0 \defeq \lfloor k_\ast \rfloor$ or $k_0 + 1$; (b) $k_u$ is also an upper bound for $k_0$; (c) therefore, $k_0$ can be searched for via a bisection search algorithm, which is shown through the master theorem \cite{cormen2022introduction} to have complexity $O(\log(k_u))$.

Therefore, the result follows from Lemma \ref{lem:finite:h} below, which is essentially an exercise in calculus with special functions, except that (in the notation there) we have to give a simpler upper bound for the Lambert $\mathcal{W}$ term. For this, we use the well-known bound $-\mathcal{W}_{-1}(x) \leq -\log(-x) + \log(-\log(-x)) + 1/2 \leq -2\log(-x)$, say, to obtain 
\[
    k_\ast \defeq x_0 \leq -2(2+x_\ast) \log\left(\frac{e^{\frac{x_\ast-1}{2+x_\ast}}}{2+x_\ast}\right) - 1 \leq 2(2+x_\ast)\log(2+x_\ast) \leq 2pd\log(pd).
\]
\end{proof}

\begin{lemma}
\label{lem:finite:h}
Let $h_{d,p}:\R_+\to \R, \ h_{d,p}(x):=x^2\frac{d}{dx}g_{d,p}(x)$. Let $x_\ast = x_\ast(d,p) = p(d-1)-3/2$. When $d\ge2$ and $p\ge3,$ $h_{d,p}$ has a unique zero at $x_0$, and 
\[
    x_0\le -(2+x_*)\mathcal{W}_{-1}\left(-\frac{e^{\frac{x_*-1}{2+x_*}}}{2+x_*}\right)-1,
\]
where $\mathcal{W}_{-1}$ denotes the real lower branch of the Lambert $\mathcal{W}$ function.
\end{lemma}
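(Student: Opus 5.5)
The plan is to reduce everything to the function $F(x) \defeq x\,g_{d,p}(x)$. For continuous $x > 0$,
\[
  F(x) = (1-p)\log\Gamma(x+1) + p\log\Gamma(x+d) - p\log\Gamma(d),
\]
and then $h_{d,p}(x) = xF'(x) - F(x)$. Two facts drive the argument. First, $h_{d,p}(0^+) = -F(0) = 0$. Second, differentiating gives
\[
  h_{d,p}'(x) = xF''(x), \qquad F''(x) = p\,\psi'(x+d) - (p-1)\,\psi'(x+1),
\]
where $\psi'$ is the trigamma function. So uniqueness of the zero reduces to a sign analysis of $F''$: it should be negative near $0$ and positive after a single sign change.

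\textbf{Uniqueness of the zero.} I would write $F''(x) = \psi'(x+1)\bigl(p\,r(x) - (p-1)\bigr)$ with $r(x) = \psi'(x+d)/\psi'(x+1)$.
\begin{itemize}
  \item I will show that $r$ is strictly increasing on $(0,\infty)$ with $r(x) \to 1$ as $x \to \infty$. This uses the integral representation $\psi'(y) = \int_0^\infty \frac{t e^{-yt}}{1-e^{-t}}\,dt$: the ratio of two Laplace transforms of this type is monotone, either by a log-concavity/Chebyshev-type argument in the shift variable, or more crudely via the trigamma bounds $\frac1y + \frac1{2y^2} < \psi'(y) < \frac1y + \frac1{y^2}$.
  \item Then $F''$ has at most one sign change. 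It is positive at infinity because $p\,r(x) - (p-1) \to 1$.
  \item Negativity near $0$ requires $p\,\psi'(d) < (p-1)\pi^2/6$. Since $\psi'$ is decreasing, $\psi'(d) \le \psi'(2) = \pi^2/6 - 1$ for $d \ge 2$, so it suffices that $p(\pi^2/6 - 1) < (p-1)\pi^2/6$, i.e.\ $p < \pi^2/6$. That fails for large $p$, so the bound $\psi'(d) \le \psi'(2)$ alone is not enough; what saves it is that $\psi'(d) \le \psi'(2)$ combined with the monotonicity of $r$ (I will recheck the algebra and, if needed, use $\psi'(d) \approx 1/d$ for the specific ranges). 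This is exactly where the hypotheses $d \ge 2$, $p \ge 3$ enter.
  \item Granting the sign pattern: $h_{d,p}$ starts at $0$, strictly decreases while $F'' < 0$, then strictly increases. I will also show it is unbounded above (next step). Hence it has exactly one positive zero $x_0$.
\end{itemize}

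\textbf{Explicit upper bound on $x_0$.} I will lower-bound $h_{d,p}$ by an elementary function whose zero is given by Lambert $\mathcal{W}$. With $y = x+1$:
\begin{itemize}
  \item Use the convexity identity $xF'(x) - F(x) = \int_0^x tF''(t)\,dt$, or directly insert two-sided Stirling bounds for $\log\Gamma$ and for $\psi$, e.g.\ $\log y - \frac1y < \psi(y) < \log y - \frac1{2y}$, together with Gronwall-type bounds on $\log\Gamma$ as used in Lemma~\ref{lem:double_bound_constrained_sum}.
  \item The aim is an inequality of the form
  \[
    h_{d,p}(x) \ge y - (2+x_*)\log y + (x_* - 1), \qquad x_* = p(d-1) - \tfrac32.
  \]
  The leading $x\log x$ terms of $xF'$ and $F$ cancel. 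What remains is a linear term coming from $-F$ and a logarithmic term $\approx -\bigl(p(d-1) + \text{const}\bigr)\log y$, produced by the shift $x+d$ versus $x+1$ in $p$ of the Gamma factors. The constants $-3/2$ and $+2$ come from the $\frac12\log$ corrections in Stirling.
  \item The right-hand side is convex in $y$ and has a largest zero $y_+$. Setting $y = -(2+x_*)w$ turns the equation into $we^{w} = -e^{(x_*-1)/(2+x_*)}/(2+x_*)$, so $y_+ = -(2+x_*)\,\mathcal{W}_{-1}(\cdot)$. Since $h_{d,p} > 0$ for $y > y_+$, this gives $x_0 \le y_+ - 1$, which is the stated bound.
\end{itemize}

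\textbf{Main obstacle.} I expect the hard part to be the bookkeeping of the Stirling error terms. The logarithmic coefficient has to come out exactly as $2+x_*$, and the constant as $x_* - 1$, with all dropped terms having a favorable sign uniformly in $d \ge 2$, $p \ge 3$ and $x > 0$. A secondary difficulty is verifying that $F''$ is negative near $0$ for all admissible $(d,p)$. If the clean monotonicity of $r$ resists, I would instead check the sign of $F''$ at $0$ directly, and prove that $F''$ has a single sign change using the rational bounds on $\psi'$, which turn $p\,\psi'(x+d) - (p-1)\,\psi'(x+1)$ into an explicit rational inequality in $x$.
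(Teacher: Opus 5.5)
\textbf{Uniqueness of the zero.} Your argument is sound, and it is cleaner than the paper's. The paper writes $h_{d,p}'(x)=x\,m_{d,p}(x)$ with $m_{d,p}(x)=\psi'(1+x)-p\sum_{j=1}^{d-1}(x+j)^{-2}$, which is your $F''$. It then proves $m_{d,p}$ increasing on $[0,x_\ast]$ by a three-case polynomial analysis. You instead use that $r=\psi'(\cdot+d)/\psi'(\cdot+1)$ is strictly increasing. This follows in one line from strict log-\emph{convexity} of $\psi'$ (for example from $\psi'(y)=\sum_{n\ge0}(y+n)^{-2}$); log-concavity is the wrong property. It works for all $d,p\ge2$. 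Your ``crude'' fallback via rational trigamma bounds would only bound $r$, not show it is monotone.

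However, your check at $x=0$ has the inequality reversed. The condition $p(\pi^2/6-1)<(p-1)\pi^2/6$ is equivalent to $p>\pi^2/6$. So $\psi'(d)\le\psi'(2)$ alone gives $F''(0)<0$ for every $p\ge2$; this is the paper's $h''_{d,p}(0)\le(\pi^2-6p)/6$. No rescue is needed, and monotonicity of $r$ could not supply one, since it says nothing about the value at $0$.

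\textbf{The bound on $x_0$.} Here there is a genuine gap. The inequality $h_{d,p}(x)\ge y-(2+x_\ast)\log y+(x_\ast-1)$ is false near $x=0$. The left side tends to $0$ and is negative on the initial interval where $F''<0$, while the right side at $y=1$ equals $x_\ast\ge 3/2$. So no bookkeeping ``uniformly in $x>0$'' can produce it.

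The Stirling route is also too lossy as proposed. Lower-bounding $\psi(x+d)$ by $\log(x+d)-\frac{1}{x+d}$ and multiplying by $px$ loses about $p/2$ in the constant. Yet for $d=2$ the true large-$y$ constant of $h_{d,p}$, namely $x_\ast+1-\frac12\log(2\pi)+p\log\Gamma(d)$, exceeds your target $x_\ast-1$ by only about $1.08$.

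The missing step is a pointwise bound on $F''$, fed into the identity $h_{d,p}(x)=\int_0^x tF''(t)\,dt$ that you already wrote down. Using the lower trigamma bound $\psi'(z)>\frac1z+\frac1{2z^2}$ you listed,
\[
F''(t)=\psi'(t+1)-p\sum_{j=1}^{d-1}\frac{1}{(t+j)^2}>\frac{t+\frac32}{(t+1)^2}-\frac{p(d-1)}{(t+1)^2}=\frac{t-x_\ast}{(t+1)^2}.
\]
This, not Stirling, is where $x_\ast$ comes from. Integrating gives
\[
h_{d,p}(x)\ge\omega+x_\ast-\frac{1+x_\ast}{\omega}-(2+x_\ast)\log\omega,\qquad \omega=x+1.
\]
Since $\frac{1+x_\ast}{\omega}\le1$ for $\omega\ge1+x_\ast$, this yields your target inequality, but only on that range; this is the comparison made in Lemma~\ref{lem:upper-bound-location-finite-d}. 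That range suffices: your convex comparison function has its largest zero $y_+$ to the right of its minimizer $2+x_\ast>1+x_\ast$. The same bound gives the unboundedness of $h_{d,p}$, and your Lambert-$\mathcal{W}$ step then concludes as written.
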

\begin{proof}
    First recall that
    \[
    h_{d,p}(x) = x^2\frac{\diff}{\diff x} g_{d,p}(x) = -\log(\Gamma(x+1)) - p\log\left(\frac{\Gamma(d+x)}{\Gamma(d)\Gamma(x+1)}\right) + x(1-p)\psi(x+1)+xp\psi(d+x),
    \]
    where $\psi$ denotes here the digamma function (that is, the logarithmic derivative of the Gamma function). Let also
    \begin{equation}
      \ell_{d,p}(x) \defeq \frac{\diff}{\diff x} h_{d,p}(x) = x(p\psi'(d+x)-(p-1)\psi'(1+x)),  
    \end{equation}
    which can be rewritten, thanks to the property $\psi(x+1)=\psi(x)+\frac1x$, as
    \begin{equation}
        \ell_{d,p}(x) = x\left(\psi'(1+x)-p\sum_{j=1}^{d-1} \frac{1}{(x+j)^2} \right).
    \end{equation}
    Note that $h_{d,p}(0) = h'_{d,p}(0) = 0$, and that for $d \geq 2$
    \[
        h''_{d,p}(0) = \ell'_{d,p}(0) = p\psi'(d)-(p-1)\frac{\pi^2}{6} \leq p\psi'(2) - (p-1)\frac{\pi^2}{6} = \frac{\pi^2-6p}{6}
    \]
    which is negative for $p\ge 2$. Thus $h_{d,p}$ must be negative in a neighborhood of zero; since $\lim_{x \to +\infty} h_{d,p}(x) = +\infty$, it must have some zero; in order to show that this zero is unique, it suffices to show that, as $x$ increases from $0$ to infinity, $\ell_{d,p}$ is negative for some interval and then positive for the remainder.

    To do this, we first consider $m_{d,p}(x)\defeq \psi'(1+x)-p\sum_{j=1}^{d-1} \frac{1}{(x+j)^2}$, so that $\ell_{d,p}(x)=x\ m_{d,p}(x).$ We use the following inequalities on the polygamma functions (which are derivatives of the digamma function). Those can be found in \cite[Lemma 3]{qi2010complete}, say, and hold for any $t\in(0,\infty)$:
    \begin{equation}
    \label{eqn:polygamma-bounds}
        \frac{t+\frac{1}{2}}{t^2} < \psi'(t) < \frac{t+1}{t^2}, \quad
        - \frac{1}{t^2} - \frac{2}{t^3} < \psi''(t) < -\frac{1}{t^2} - \frac{1}{t^3}.
    \end{equation}
    From \eqref{eqn:polygamma-bounds}, we have
    \begin{equation}
    \label{eqn:finite-m-bounds}
        m_{d,p}(x) > \frac{x+\frac{3}{2}}{(x+1)^2} - p \sum_{j=1}^{d-1} \frac{1}{(x+j)^2} \geq \frac{x+\frac{3}{2}}{(x+1)^2} - p\sum_{j=1}^{d-1} \frac{1}{(x+1)^2} = \frac{x-x_\ast}{(x+1)^2}.
    \end{equation}
    Thus $m_{d,p}$ (and therefore $\ell_{d,p}$) is strictly positive for $x \geq x_\ast$. We also note that 
    \[
        m_{d,p}(0) = \psi'(1) - p\sum_{j=1}^{d-1} \frac{1}{j^2} \leq \psi'(1) - 2\sum_{j=1}^{d-1} \frac{1}{j^2} \leq \psi'(1) - 2 = \frac{\pi^2}{6} - 2 < 0.
    \]
    Therefore, in order to show that $h_{d,p}$ has a unique zero, it suffices to show that $m_{d,p}$ is strictly increasing on $[0,x_\ast]$. We do this in three cases according to the values of $d$ and $p$.
    \begin{itemize}
    \item \textbf{Case $1$ ($p \geq 4$):} Using \eqref{eqn:polygamma-bounds}, we find
    \begin{equation}
    \label{eqn:finite-mprime-bounds}
    \begin{split}
        m'_{d,p}(x) &= \psi''(1+x) + 2p\sum_{j=1}^{d-1} \frac{1}{(x+j)^3} > -\frac{1}{(x+1)^2} - \frac{2}{(x+1)^3} + 2p\int_{j=1}^d \frac{1}{(x+j)^3} \diff j \\
        &= \frac{-x^3+(-3-2d-2p+2dp)x^2+(-6d-d^2-3p+2dp+d^2p)x+(d^2p-p-3d^2)}{(1+x)^3(x+d)^2}.
    \end{split}
    \end{equation}
    To show this is positive on $(0,x_\ast)$, it suffices to show that the numerator of the last expression, which we will call $n_{d,p}(x)$, is strictly positive over $(0,x_\ast)$. Since $n_{d,p}(x)$ is a cubic in $x$ with negative leading coefficient, we have that $n'_{d,p}(x)$ is a quadratic in $x$ with negative leading coefficient; thus, if we can show that $n'_{d,p}(0) > 0$, we must have that $n'_{d,p}$ has a unique zero $z$ on $(0,\infty)$, with $n_{d,p}$ increasing on $(0,z)$ and decreasing otherwise. If we can additionally show that $n_{d,p}(0) \geq 0$ and $n_{d,p}(x_\ast) > 0$, then we must have that $n_{d,p} > 0$ on $(0,x_\ast)$. 

    So there remain only three computations:
    \begin{itemize}
        \item Notice that $n_{d,p}(0) = d^2(p-3)-p$, which is nonnegative for all $d \geq 2$ and $p \geq 4$.
        \item Notice also that 
        \[
        n_{d,p}(x_\ast) = \frac{8p^3(d-1)^3 - 4(d-1)^2(3+2d)p^2-2(d-1)(4d^2+2d-13)p - 3(3-2d)^2}{8}.
        \]
        We claim the numerator of this fraction is positive for $d \geq 2$ and $p \geq 4$; indeed, for such $d$, it is a cubic in $p$ with positive leading coefficient, whose value at $p = 4$ is $352d^3-1468d^2+1948d-835 > 0$; whose derivative in $p$, at $p = 4$, is $312d^3-1116d^2+1310d-506 > 0$; and whose second derivative in $p$, at $p = 4$, is $8(d-1)^2(22d-27) > 0$. This means that it is convex and increasing in $p$ for $p \geq 4$, hence positive for such $p$; this means that $n_{d,p}(x_\ast) > 0$ for such $d$ and $p$.
        \item For $d \geq 2$ and $p \geq 4$, we also have
        \begin{align*}
                n'_{d,p}(0) &= p(d+3)(d-1)-d(d+6) \geq 4(d+3)(d-1)-d(d+6) = d(3d+2) - 12 \geq 4 > 0.
        \end{align*}
    \end{itemize}
    \item \textbf{Case $2$ ($p = 3$ and $d \geq 3$):} Here, an analogous argument to the one above shows that $n_{d,3}(x_\ast) > 0$, but for $d \geq 3$ rather than $d \geq 2$. Now we run the argument from above on the interval $[1,x_\ast]$; we have $n_{d,3}(1) = -22+2d(2+d) > 0$ for $d \geq 3$, and $n'_{d,3}(1) = -30+2d(4+d) > 0$ for $d \geq 3$. Thus we conclude that $m'_{d,3}(x)$ is positive on $(1,x_\ast)$, but also $m_{d,3}(1) < 0$, so we know that $m_{d,3}$ has a unique zero over $x > 1$, and it remains only to show that $m_{d,p}$ is negative over $x \in [0,1]$. But for such $x$ we have
    \[
        m_{d,3}(x) \leq \frac{x+2}{(x+1)^2} - 3\sum_{j=1}^{d-1} \frac{1}{(x+j)^2} \leq \frac{x+2}{(x+1)^2} - 3\sum_{j=1}^2 \frac{1}{(x+j)^2} = \frac{x^3-6x-7}{(2+3x+x^2)^2}
    \]
    which is negative for $x \in [0,1]$.
    \item \textbf{Case $3$ ($p = 3$ and $d = 2$):} Here we skip estimating the sum by the integral, and replace \eqref{eqn:finite-mprime-bounds} with the simpler estimate
    \[
        m'_{d,p}(x) = \psi''(1+x) + \frac{6}{(x+1)^3} \geq -\frac{1}{(x+1)^2} - \frac{2}{(x+1)^3} + \frac{6}{(x+1)^3} = \frac{3-x}{(1+x)^3}.
    \]
    This is positive for $x < 3$, which suffices since $x_\ast(2,3) = 3/2$.
    \end{itemize}
    This ends the proof that $h_{d,p}$ has a unique zero.

    To bound this zero, we use \eqref{eqn:finite-m-bounds} to bound $\ell_{d,p}(x)\ge x\frac{(x-x_*)}{(x+1)^2}$. Thus, since $\ell_{d,p}(0) = 0$, for any $y \geq 0$ we have
    \begin{equation*}
        h_{d,p}(y)=\int_0^y\ell_{d,p}(t)\diff t\ge \int_{0}^yt\frac{(t-x_*)}{(t+1)^2}\diff t.
    \end{equation*}
    The right hand side can be integrated explicitly, yielding
    \begin{equation*}
        h_{d,p}(y)\ge (1+x_*+y+1)-\frac{1+x_*}{y+1} -1-(2+x_*)\log(y+1).
    \end{equation*}
In particular, the location of the zero of $h_{d,p}(y)$ is bounded above by the location of the unique zero of $(1+x_*+y+1)-\frac{1+x_*}{y+1} -1-(2+x_*)\log(y+1)$ on $(0,+\infty)$. Changing variables to $\omega:=1+y, \omega\in(1,+\infty)$ we must look for the solution to the equation
\begin{equation*}
    \log(\omega)-\frac{\omega}{2+x_*}+\frac{1+x_*}{2+x_*} \cdot \frac1{\omega}=\frac{x_*}{2+x_*}.
\end{equation*}
Lemma \ref{lem:upper-bound-location-finite-d} with $s=x_*$ provides the needed claim.
\end{proof}

\begin{lemma}\label{lem:upper-bound-location-finite-d}
    Let $s\ge3/2$ be a real number. Consider the equation
    \begin{equation}\label{eq:exact}
    \log(\omega)-\frac{\omega}{2+s}+\frac{1+s}{2+s}\cdot \frac1{\omega}=\frac{s}{2+s}.
    \end{equation}
    This equation has a unique solution $\omega_0$ over the interval $(1,+\infty)$, which additionally lies in the interval $[1+s,w_+]$, where $w_+$ is the unique solution in $[1+s,+\infty)$ to 
    \begin{equation}\label{eq:approx_upper}
        \log(\omega)-\frac1{2+s}\omega=\frac{s}{2+s}.
    \end{equation}
    In particular, this solution can be written explicitly in terms of the real lower branch of the Lambert $\mathcal{W}$ function as
    \begin{equation}\label{eq:upper_bound_explicit_location_finite_d}
    w_+=-(2+s)\mathcal{W}_{-1}\left(-\frac{e^{\frac{s-1}{2+s}}}{2+s}\right).
    \end{equation}
\end{lemma}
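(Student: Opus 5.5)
The plan is to study $F(\omega) \defeq \log\omega - \frac{\omega}{2+s} + \frac{1+s}{2+s}\cdot\frac1\omega - \frac{s}{2+s}$ on $[1,\infty)$ through its derivative, then compare it with a simpler concave function whose zero is given by Lambert $\mathcal{W}$.

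\emph{Step 1: uniqueness of $\omega_0$ and the lower bound.} Differentiating gives
\[
F'(\omega) = \frac{(2+s)\omega - \omega^2 - (1+s)}{(2+s)\omega^2} = -\frac{(\omega-1)(\omega-(1+s))}{(2+s)\omega^2}.
\]
Moreover $F(1) = \frac{-1+(1+s)-s}{2+s} = 0$. So $F$ is strictly increasing on $(1,1+s)$ and strictly decreasing on $(1+s,\infty)$, with $F(\omega)\to-\infty$ as $\omega \to \infty$. It follows that $F>0$ on $(1,1+s]$, and $F$ has exactly one zero $\omega_0$ in $(1,\infty)$. This zero satisfies $\omega_0 > 1+s$, which is the lower end of the claimed interval.

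\emph{Step 2: the comparison function.} For $\omega \ge 1+s$ we have $\frac{1+s}{(2+s)\omega} \le \frac{1}{2+s}$. Hence on $[1+s,\infty)$,
\[
F(\omega) \le H(\omega) \defeq \log\omega - \frac{\omega}{2+s} - \frac{s-1}{2+s}.
\]
The constant $\frac{s-1}{2+s}$ is exactly the one encoded in the explicit formula \eqref{eq:upper_bound_explicit_location_finite_d}. Indeed, $\log\omega - \frac{\omega}{2+s} = c$ is equivalent to $-\frac{\omega}{2+s}e^{-\omega/(2+s)} = -\frac{e^{c}}{2+s}$, so its larger root is $\omega = -(2+s)\mathcal{W}_{-1}\bigl(-e^{c}/(2+s)\bigr)$, and here $c = \frac{s-1}{2+s}$. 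I will therefore carry out the comparison with $H$, and read the right-hand side of \eqref{eq:approx_upper} accordingly.

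This step is the one I regard as the real obstacle. With the literal constant $\frac{s}{2+s}$, the function $\log\omega - \frac{\omega}{2+s} - \frac{s}{2+s}$ has maximum value $\log(2+s) - \frac{2+2s}{2+s}$. At $s = 3/2$ this equals $\log 3.5 - 5/3.5 < 0$, so that equation has no root there at all. The version with $\frac{s-1}{2+s}$ is what both the Lambert expression and its later use in the proof of Proposition \ref{prop:finite} (with $s=x_\ast$) require.

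\emph{Step 3: existence and uniqueness of $w_+$, and the upper bound.} The function $H$ is strictly concave and tends to $-\infty$. At $\omega = 1+s$,
\[
H(1+s) = \log(1+s) - \frac{2s}{2+s}.
\]
This quantity vanishes at $s=0$, and its $s$-derivative is $\frac{1}{1+s} - \frac{4}{(2+s)^2} = \frac{s^2}{(1+s)(2+s)^2} > 0$. Hence $H(1+s) > 0$ for all $s \ge 3/2$.

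By strict concavity, $H$ then has exactly one zero $w_+$ in $[1+s,\infty)$, with $H>0$ on $[1+s,w_+)$ and $H<0$ beyond $w_+$. This zero is the larger root, which is the $\mathcal{W}_{-1}$-branch value in \eqref{eq:upper_bound_explicit_location_finite_d}. Finally, $H(\omega_0) \ge F(\omega_0) = 0$ and $\omega_0 \ge 1+s$, so $\omega_0 \le w_+$. Combined with Step 1, this gives $\omega_0 \in [1+s, w_+]$.
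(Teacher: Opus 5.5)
Your proof is correct and follows the paper's argument essentially step for step. You use the same factorization of the derivative to locate the unique zero $\omega_0>1+s$, and the same comparison function $\log\omega-\frac{\omega}{2+s}-\frac{s-1}{2+s}$ (the paper's $g_s$) on $[1+s,\infty)$, whose larger root is the $\mathcal{W}_{-1}$ expression. You are also right that the right-hand side of \eqref{eq:approx_upper} should read $\frac{s-1}{2+s}$ rather than $\frac{s}{2+s}$, which is what the paper's proof and the Lambert formula actually use; your derivative-in-$s$ check that $\log(1+s)-\frac{2s}{2+s}>0$ supplies a step the paper only asserts.
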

\begin{proof}
    Define $\kappa_s:[1,+\infty)\to \R$ by $\omega\mapsto \log(\omega)-\frac{\omega}{2+s}+\frac{1+s}{2+s}\frac1{\omega}-\frac{s}{2+s}.$ Note that $\kappa_s(1)=0$ for all $s$, and for all strictly positive $s$ we have $\lim_{\omega\to \infty}\kappa_s(\omega)=-\infty$. Computing the derivative of $\kappa_s$ we find
    \[
        \kappa_s'(\omega)=\frac{(1+s-\omega)(\omega-1)}{(2+s)\omega^2}.
    \]
    Therefore, $\kappa_s'(\omega)$ is positive on the interval $[1,1+s]$ and negative elsewhere. Therefore and in particular, $\kappa_s$ must have, over $\omega \in (1,+\infty)$, a unique zero at some $\omega_0 > 1+s$. We can now restrict our analysis to the values of $\omega$ in $[1+s,+\infty).$

    Consider $g_s:[1+s,+\infty)\to \R$ defined by $\omega\mapsto \log(\omega)-\frac{\omega}{2+s}+\frac{1}{2+s}-\frac{s}{2+s}$. Notice that $g$ has a unique zero $\omega_\ast$ over $\omega \geq 1+s$, which furthermore satisfies $\omega_\ast > 2+s$. This follows since $g_s(1+s) = \log(1+s)-\frac{2s}{2+s}$ is positive, and since $g'_s(\omega) = \frac{1}{\omega} - \frac{1}{2+s}$ is positive for $\omega < 2+s$ and negative for $\omega > 2+s$. Additionally, it is here important to note that $\frac{1}{2+s}=\frac{1+s}{2+s}\frac1{1+s}>\frac{1+s}{2+s}\frac1{\omega}$ for all $\omega \in [1+s,+\infty).$ Hence, noting that on this latter interval $g_s(\omega)-\kappa_s(\omega)=\frac{1}{2+s}-\frac{1+s}{2+s}\frac1{\omega}\ge0$, we have $g_s(\omega)\ge \kappa_s(\omega).$ Therefore, the unique zero of $g_s(\omega)$ on $[1+s,+\infty)$ upper bounds the unique zero of $\kappa_s(\omega)$ on this interval.

    We end the proof by noticing that 
    $$g_s(\omega)=0\ \Leftrightarrow \ \log(\omega)-\frac{\omega}{2+s}+\frac{1-s}{2+s}=0 \ \Leftrightarrow \ -\frac{\omega}{2+s}e^{-\frac{\omega}{2+s}}=-\frac{e^{\frac{s-1}{2+s}}}{2+s}.$$
    The last writing of this equation, whose solution we denote $w_+$, is solved for $\omega\ge 2+s$ by the lower real branch of the Lambert function as
    \begin{equation}
        w_+=-(2+s)\mathcal{W}_{-1}\left(-\frac{e^{\frac{s-1}{2+s}}}{2+s}\right),
    \end{equation}
    In fact $-\frac{e^{\frac{s-1}{2+s}}}{2+s}>-\frac1{2+s}>-1/e$ for all $s\ge 3/2,$ which implies there are two real roots given by the upper ($\mathcal{W}_0$) and lower ($\mathcal{W}_{-1}$) branches of the Lambert function, satisfying $\mathcal{W}_{-1}(-\frac{e^{\frac{s-1}{2+s}}}{2+s}) < -1 < \mathcal{W}_0(-\frac{e^{\frac{s-1}{2+s}}}{2+s})$. Since the argument above shows that $\frac{-\omega_\ast}{2+s} < -1$, we conclude that the branch must be the lower one, which finishes the proof.
\end{proof}

\begin{remark}
    Interestingly, we can improve on this bound iteratively. In fact, we now know $\omega_0\in[1+s,w_+]$ and using this upper bound, we can construct a function $g^{(1)}_s(\omega):[1+s,w_+]\to \R, \omega \mapsto \log(\omega)-\frac{\omega}{2+s}+\frac{1+s}{2+s}\frac1{w_+}-\frac{s}{2+s}$. Since $\forall \omega\in[1+s,w_+],\  \frac{1+s}{2+s}\frac1{w_+}\le\frac{1+s}{2+s}\frac1{\omega},$ one has $g^{(1)}_s(\omega)\le\kappa_s(\omega)$ on $[1+s,w_+]$. Therefore, the zero $\omega_-^{(1)}$ of $g^{(1)}_s(\omega)$ lower bounds $\omega_0$, and since $\frac{1+s}{2+s}\frac1{w_+}$ is constant, $\omega_-^{(1)}$ can be expressed explicitly in terms of Lambert function again. Using this lower bound we then know $\omega_0\in [\omega_-^{(1)},w_+].$ We can use the lower end of this new interval to define $g^{(2)}_s(\omega):[\omega_-^{(1)},w_+]\to \R, \omega \mapsto \log(\omega)-\frac{\omega}{2+s}+\frac{1+s}{2+s}\frac1{\omega_-^{(1)}}-\frac{s}{2+s},$ and remark now that $g^{(2)}_s(\omega)\ge \kappa_s(\omega)$ which can be used to upper bound $\omega_0$ in the smaller interval $[\omega_-^{(1)},w_+],$ therefore leading to a better upper bound. Finally, pushing this argument further by iterating it allows to prove that $\omega_0$ satisfies an explicit fixed point equation involving the Lambert function and can therefore be found numerically. This is however much more than we need here.
\end{remark}


\section{Comparison with prior bounds}
\label{app:prior-bounds-comparison}

In this section, we compare our new moment method to four others previously used for upper-bounding injective norms: Kac--Rice arguments, Sudakov--Fernique arguments, epsilon-net arguments and PAC-Bayesian arguments.


\subsection{Kac--Rice arguments.}\
The Kac--Rice formula allows one to count critical points of Gaussian processes. If $M$ is a nice set and $f : M \to \R$ is a smooth Gaussian process, then 
\[
    \max\{f(t) : t \in M\} = \inf\{u \in \R : f \text{ has no critical points in }\{t \in M : f(t) \geq u\}\}
\]
and one can therefore use Kac--Rice to locate $\max f$. This is a common argument in spin glasses; see \cite{AdlTay2007, AzaWsc2009} for a textbook treatment, and \cite{auffinger2013random,subag2017complexity} for a classic example of this strategy at work.

If $T$ is a random tensor with i.i.d. Gaussian entries, then its injective norm can be viewed as the maximum of a Gaussian process, and therefore in principle located in this way. Thus some spin-glass results can be retroactively re-interpreted as statements about the injective norm of random tensors, and we gave an upper bound with this strategy in our prior work \cite{dartois2024injective}, for what we would now call the \hyperlink{model:ak}{Model $A_\K$} with Gaussian entries. (After posting \cite{dartois2024injective} to the arXiv, the authors learned that the high-level idea of using the Kac--Rice formula to study the injective norm of a random tensor was proposed, but not carried out, in \cite{Evn2021}. Bates and Sohn recently found matching lower bounds for some of our results \cite{bates2025balanced}, but without Kac--Rice.) Thus one can compare methods.

First, we remark that the Kac--Rice formula deals with Gaussian processes. Thus most of the random-tensor models considered in this paper (such as the bounded-rank \hyperlink{model:bc}{Model $B_\C$}, or the \hyperlink{model:ak}{$A_{\mathbb{K}}$} models with entries that are sub-Gaussian but not Gaussian) are beyond its scope, because it is not clear how to write their injective norms as the maximum of some Gaussian process.

Second, while the Kac--Rice formula is in principle available at finite $d$ and $p$, it requires one to understand the expected absolute value of the determinant of a random matrix of size $p(d-1) \times p(d-1)$. Since we primarily understand \emph{asymptotics} of such determinants as the matrix size tends to infinity (for example from \cite{BenBouMcK2022}), in practice the Kac--Rice formula can only give bounds in the limit either $p \to \infty$ or $d \to \infty$ (or both). 

Compared to these two, the method presented in this paper is more widely applicable: it can treat non-Gaussian models, and give non-asymptotic results. It is also much simpler technically. However, the Kac--Rice formula gives tighter bounds in the limit $d \to \infty$; the bound there is essentially $\sqrt{p}E_0(p)$, where $E_0(p)$ is a constant coming from spin glasses which is known to scale like $\sqrt{\log p}$ as $p \to \infty$. We cannot presently recover $E_0(p)$ with our methods. However, our bounds are only slightly worse than this for each $p$ (see Figure \ref{fig:relative_diff-KR-versus-moments} and Figure \ref{fig:sudakov-fernique} for a comparison), and they have the same scaling $\sqrt{p \log p}$ as $p \to \infty$ (importantly, with constant 1, not just $\OO(\sqrt{p \log p})$.

\begin{figure}
    \centering
    \includegraphics[width=0.6\linewidth]{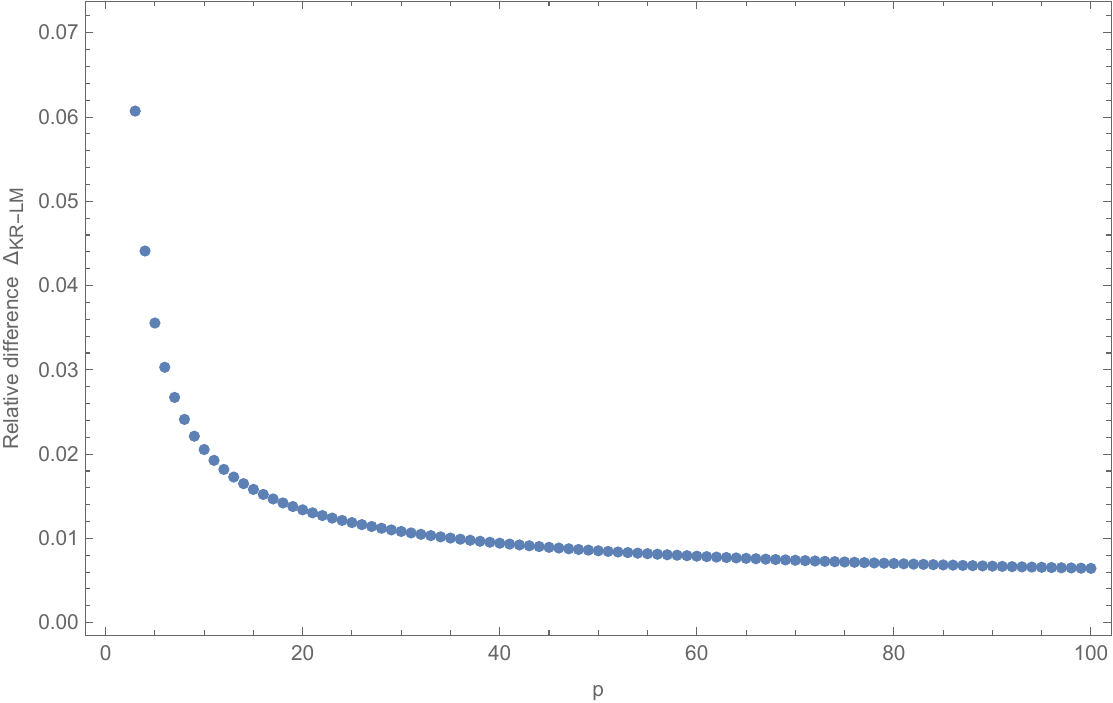}
    \caption{Plot of the relative difference $\Delta_{\text{KR-LM}}:=\frac{\lvert\sqrt{p}E_0(p)-\psi_p(\alpha_0(p)) \rvert}{\sqrt{p}E_0(p)}$ of the bounds obtained by Kac--Rice method versus our large moments approach against $p$, in the case $d_1=\ldots=d_p=d\to\infty$.}
    \label{fig:relative_diff-KR-versus-moments}
\end{figure}


\subsection{Sudakov--Fernique arguments.}\
A classical and well-known argument (dating back at least as far as \cite{DavSza2001}; for a textbook treatment, see, for instance, \cite[Section 7.3]{Ver2018}) uses the Sudakov-Fernique inequality to get a tight upper bound on the operator norm of a random \emph{matrix} with i.i.d. Gaussian entries. (In some places in the literature, the inequality used is called ``Slepian's'' rather than ``Sudakov-Fernique''; elsewhere the term ``Slepian'' is reserved for a Gaussian comparison lemma that requires the variances to be the same.) In this section, we give the natural generalization of this argument to random \emph{tensors}, and show that in many cases of interest it is worse than our bound.

We remark also that Sudakov--Fernique only deals with Gaussian processes, and therefore has the same limitation as Kac--Rice arguments. However, unlike Kac--Rice arguments, Sudakov--Fernique arguments \emph{are} non-asymptotic. 

\begin{lemma}
\label{lem:sudakov-fernique}
\textbf{(Sudakov-Fernique bound)} Let $T$ come from \hyperlink{model:ak}{Model $A_\R$}, and suppose that its entries are not just rigidly sub-Gaussian but actually Gaussian. Then
\begin{equation}
\label{eqn:sudakov-fernique}
    \E[\injnormR{T}] \leq \sum_{j=1}^p \frac{\sqrt{d_j}}{(d_1 \cdot \ldots \cdot d_p)^{1/(2p)}}.
\end{equation}
\end{lemma}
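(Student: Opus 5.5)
The plan is the classical Sudakov--Fernique comparison, as for Gaussian matrices. Write $\sigma^2 \defeq (d_1\cdots d_p)^{-1/p}$ for the entry variance and $\mathcal{S} \defeq \bSR^{d_1-1}\times\cdots\times\bSR^{d_p-1}$. On $\mathcal{S}$ consider the centered Gaussian process
\[
X_x \defeq \ip{T, x_1\otimes\cdots\otimes x_p}, \qquad x=(x_1,\ldots,x_p)\in\mathcal{S}.
\]
By definition $\injnormR{T}=\max_{x\in\mathcal{S}} X_x$, since replacing $x_1$ by $-x_1$ flips the sign and so no absolute value is needed. As a comparison process, take
\[
Y_x \defeq \sigma\sum_{j=1}^p \ip{g_j, x_j},
\]
where $g_j\sim\mathcal{N}_\R(0,\mathbbm{1}_{d_j})$ are independent. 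Its maximum is explicit: $\max_{x\in\mathcal{S}} Y_x=\sigma\sum_j\|g_j\|_2$. Since $\E\|g_j\|_2\le\sqrt{d_j}$, this gives
\[
\E\max_x Y_x \le \sigma\sum_{j=1}^p\sqrt{d_j},
\]
which is exactly the right-hand side of \eqref{eqn:sudakov-fernique}. It then suffices to verify the increment comparison $\E(X_x-X_y)^2\le\E(Y_x-Y_y)^2$ for all $x,y\in\mathcal{S}$ and invoke Sudakov--Fernique. The index set is compact and both processes are continuous, so a finite-net approximation reduces this to the finite version.

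For the increments, write $a_j\defeq\ip{x_j,y_j}\in[-1,1]$. Using i.i.d. entries and unit norms, one gets
\[
\E(X_x-X_y)^2=\sigma^2\,\|x_1\otimes\cdots\otimes x_p-y_1\otimes\cdots\otimes y_p\|_{\textup{HS}}^2=2\sigma^2\Bigl(1-\prod_{j=1}^p a_j\Bigr).
\]
Similarly,
\[
\E(Y_x-Y_y)^2=\sigma^2\sum_{j=1}^p\|x_j-y_j\|_2^2=2\sigma^2\sum_{j=1}^p(1-a_j).
\]
So the key inequality to prove is
\[
1-\prod_{j=1}^p a_j\le\sum_{j=1}^p(1-a_j)\qquad\text{for } a_j\in[-1,1].
\]

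The main, though mild, obstacle is this elementary inequality, because the $a_j$ can be negative. I would prove it by induction on $p$ via the identity
\[
1-ab=(1-a)+a(1-b).
\]
For $|a|\le1$ and $1-b\ge0$, this gives $1-ab\le(1-a)+(1-b)$. Now apply it with $a=a_1$ and $b=\prod_{j\ge2}a_j$, which also lies in $[-1,1]$, and use the induction hypothesis on $1-b$. This yields the increment domination, and Sudakov--Fernique then gives
\[
\E\injnormR{T}\le\E\max_x Y_x\le\sigma\sum_{j=1}^p\sqrt{d_j},
\]
which is the claim.
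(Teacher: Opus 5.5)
Your proposal is correct and matches the paper's proof step for step. It uses the same comparison process $\sigma\sum_j \langle g_j, x_j\rangle$, the same reduction of the increment comparison to $1-\prod_j a_j \le \sum_j (1-a_j)$ proved by induction on $p$, and the same Jensen bound $\E\|g_j\|_2 \le \sqrt{d_j}$ at the end. Your sign-flip justification that no absolute value is needed, and your compactness/net remark for applying Sudakov--Fernique on a continuum index set, are small details the paper leaves implicit.
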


Before proving the bound, we discuss its interpretation. In the matrix case $p = 2$, the Sudakov-Fernique bound \eqref{eqn:sudakov-fernique} becomes $(d_1/d_2)^{1/4} + (d_2/d_1)^{1/4}$, which is tight (and better than our bound). (To apply Sudakov-Fernique, one has to choose a comparison Gaussian process. The classical comparison process for the matrix case has an obvious generalization to tensors, and this is the comparison process we use for Lemma \ref{lem:sudakov-fernique}; we do not rule out the possibility that a different comparison process would give a better bound.)

However, in the tensor case $p \geq 3$, if we take $d_1 = \cdots = d_p = d$ for simplicity, the Sudakov-Fernique bound \eqref{eqn:sudakov-fernique} becomes $p$, whereas the bound from Theorem \ref{thm:asymmetric} scales like $\sqrt{p(\log p)(d-1)/d}$, asymptotically as $p \to \infty$ for $d$ fixed. In particular, the Sudakov-Fernique bound cannot capture the correct scaling in $p$. If instead we fix $p$ and let $d_1 = \cdots = d_p = d \to \infty$, then our bound \eqref{eq:multivariate_bound} becomes
\begin{equation}
\label{eqn:sudakov-fernique-our-bound}
    \lim_{d \to \infty} \E[\injnormR{T}] \leq \inf_{\alpha > 0} \frac{2^{-(p-1)/2}}{\sqrt{e}} \left(\frac{(2+\alpha)^{2+\alpha}}{\alpha^{2+\alpha}}\right)^{1/4} \left( \frac{(2+\alpha)^{2+\alpha}}{\alpha^\alpha}\right)^{(p-1)/4}.
\end{equation}
In Figure \ref{fig:sudakov-fernique}, we evaluate this for $p = 3, 4, \ldots, 8$ and compare it to both the Sudakov-Fernique bound and the bound obtained from the Kac--Rice method in our prior paper \cite{dartois2024injective}. Notice that, while the Sudakov--Fernique bound beats the moment bound for $p = 3$, the moment bound beats the Sudakov--Fernique bound asymptotically in $p$ (Theorem \ref{thm:asymmetric} shows that the moment bound scales like $\sqrt{p \log p}$, like the Kac--Rice bound), and it seems numerically that the moment bound beats the Sudakov--Fernique bound for all $p \geq 4$. 

We remark that this is a special case of \eqref{eq:multivariate_bound} when $\eta_j \equiv 1$; it would be interesting to understand the relationship between our bound and the Sudakov-Fernique bound for more general choices of $\eta_j$.

\begin{figure}
\begin{tabular}{c|c|c|c|c|c|c}
$d \to +\infty$, Gaussian & $p = 3$ & $p = 4$ & $p = 5$ & $p = 6$ & $p = 7$ & $p = 8$ \\ \hline
Kac--Rice bound & $2.87$ & $3.59$ & $4.22$ & $4.80$ & $5.33$ & $5.83$ \\ \hline
Moment bound & $3.04$ & $3.75$ & $4.37$ & $4.94$ & $5.47$ & $5.97$ \\ \hline
Sudakov--Fernique bound & $3$ & $4$ & $5$ & $6$ & $7$ & $8$ \\
\multicolumn{7}{c}{} \\
\multicolumn{7}{c}{} \\
$d = 100$, Gaussian & $p = 3$ & $p = 4$ & $p = 5$ & $p = 6$ & $p = 7$ & $p = 8$ \\ \hline
Kac--Rice bound & --- & --- & --- & --- & --- & --- \\ \hline
Moment bound & $3.03$ & $3.72$ & $4.35$ & $4.91$ & $5.44$ & $5.94$ \\ \hline
Sudakov--Fernique bound & $3$ & $4$ & $5$ & $6$ & $7$ & $8$ \\
\multicolumn{7}{c}{} \\
\multicolumn{7}{c}{} \\
$d = 100$, rigidly sub-Gaussian & $p = 3$ & $p = 4$ & $p = 5$ & $p = 6$ & $p = 7$ & $p = 8$ \\ \hline
Kac--Rice bound & --- & --- & --- & --- & --- & --- \\ \hline
Moment bound & $3.03$ & $3.72$ & $4.35$ & $4.91$ & $5.44$ & $5.94$ \\ \hline
Sudakov--Fernique bound & --- & --- & --- & --- & --- & ---
\end{tabular}
\caption{In this figure, we informally summarize three upper bounds for the quantity $\E[\injnormR{T}]$ in the case that $T$ comes from \protect\hyperlink{model:ak}{Model $A_\R$}, when $p$ is fixed and $d_1 = \cdots = d_p = d$ (in the $d \to +\infty$ table, the bound is for $\limsup_{d \to \infty} \E[\injnormR{T}]$). The ``Kac--Rice bound'' is a numerical evaluation of the quantity referred to as $\alpha(p)$ in \cite[Theorem 1.1]{dartois2024injective}; the ``moment bound'' is a numerical evaluation of the right-hand side of \eqref{eqn:sudakov-fernique-our-bound} in the $d \to \infty$ table, and is a numerical evaluation of the right-hand side of \eqref{eqn:real-case-finite-bound} (divided by $\sqrt{d}$) in the $d = 100$ tables; the ``Sudakov--Fernique bound'' is the right-hand side of \eqref{eqn:sudakov-fernique}.} 
\label{fig:sudakov-fernique}
\end{figure}

\begin{proof}[Proof of Lemma \ref{lem:sudakov-fernique}]
Let $g^{(1)}, \ldots, g^{(p)}$ be independent real Gaussian vectors such that 
\[
    g_j \sim \mc{N}_\R\left(0,\frac{1}{(d_1 \cdot \ldots \cdot d_p)^{1/p}} \mathbbm{1}_{d_j}\right).
\]
Consider the Gaussian processes $X, Y : \bSR^{d_1-1} \times \cdots \times \bSR^{d_p-1} \to \R$ defined by
\begin{align*}
    X(x^{(1)},\ldots,x^{(p)}) &= \ip{T,x^{(1)} \otimes \cdots \otimes x^{(p)}}. \\
    Y(x^{(1)},\ldots,x^{(p)}) &= \ip{g^{(1)},x^{(1)}} + \cdots + \ip{g^{(p)},x^{(p)}}.
\end{align*}
These are centered Gaussian processes, with $\sup X = \injnormR{T}$. We claim that
\begin{align*}
    &\E[(X(x^{(1)},\ldots,x^{(p)}) - X(\widetilde{x}^{(1)},\ldots,\widetilde{x}^{(p)}))^2] = \frac{1}{(d_1 \cdot \ldots \cdot d_p)^{1/p}} \sum_{i_1,\ldots,i_p} (x^{(1)}_{i_1} \cdot \ldots \cdot x^{(p)}_{i_p} - \widetilde{x}^{(1)}_{i_1} \cdot \ldots \cdot \widetilde{x}^{(p)}_{i_p})^2 \\
    &= \frac{1}{(d_1 \cdot \ldots \cdot d_p)^{1/p}} \left(2 - 2 \prod_{j=1}^p \ip{x^{(j)},\widetilde{x}^{(j)}}\right).
\end{align*}
Indeed, the first equality is clear; for the second, one can let $\mathfrak{X}=x^{(1)}\otimes \ldots \otimes x^{(p)}$ and $\tilde{\mathfrak{X}}=\tilde x^{(1)}\otimes \ldots \otimes \tilde x^{(p)}$, and therefore think about $\mathfrak{X},\tilde{\mathfrak{X}}\in \R^{d_1} \otimes \cdots \otimes \R^{d_p}$. One has $\sum_{i_1,\ldots,i_p} (x^{(1)}_{i_1} \cdot \ldots \cdot x^{(p)}_{i_p} - \widetilde{x}^{(1)}_{i_1} \cdot \ldots \cdot \widetilde{x}^{(p)}_{i_p})^2=\lnorm \mathfrak{X}-\tilde{\mathfrak{X}}\rnorm_{\textup{HS}}^2=\lnorm \mathfrak{X}\rnorm_{\textup{HS}}^2+\lnorm \tilde{\mathfrak{X}}\rnorm_{\textup{HS}}^2-2\langle\mathfrak{X} ,\tilde{\mathfrak{X}}\rangle$. Since $\lnorm \mathfrak{X}\rnorm_{\textup{HS}}^2=\lnorm \tilde{\mathfrak{X}}\rnorm_{\textup{HS}}^2=1$ and $\langle\mathfrak{X} ,\tilde{\mathfrak{X}}\rangle=\prod_{j=1}^p \ip{x^{(j)},\widetilde{x}^{(j)}}$, we obtain the claim. Next, we can also compute
\begin{align*}
    &\E[(Y(x^{(1)},\ldots,x^{(p)}) - Y(\widetilde{x}^{(1)},\ldots,\widetilde{x}^{(p)}))^2] = \E\left[ \left( \sum_{j=1}^p \ip{g^{(j)},x^{(j)} - \widetilde{x}^{(j)}} \right)^2 \right] \\
    &= \frac{1}{(d_1 \cdot \ldots \cdot d_p)^{1/p}} \sum_{j=1}^p \|x^{(j)} - \widetilde{x}^{(j)}\|_2^2 = \frac{1}{(d_1 \cdot \ldots \cdot d_p)^{1/p}} \sum_{j=1}^p \left(2 - 2\ip{x^{(j)}, \widetilde{x}^{(j)}}\right).
\end{align*}
By induction, one can easily show that $1 - \prod_{j=1}^p a_j \leq \sum_{j=1}^p (1-a_j)$ whenever $a_j \in [-1,1]$ (in the induction step, one notices that $A = \prod_{j=1}^{p-1} a_j$ is also in $[-1,1]$, so that $1 - \prod_{j=1}^p a_j = 1 - Aa_p \leq (1-A)+(1-a_p)$); this implies 
\[
    \E[(X(x^{(1)},\ldots,x^{(p)}) - X(\widetilde{x}^{(1)},\ldots,\widetilde{x}^{(p)}))^2] \leq \E[(Y(x^{(1)},\ldots,x^{(p)}) - Y(\widetilde{x}^{(1)},\ldots,\widetilde{x}^{(p)}))^2]
\]
for all arguments. Thus, from the Sudakov-Fernique inequality, we conclude that
\begin{equation}
\label{eqn:sudakov-fernique-bound}
\begin{split}
    \E[\injnormR{T}] &= \E\left[\sup_{x^{(1)},\ldots,x^{(p)}} X(x^{(1)},\ldots,x^{(p)})\right] \leq \E\left[\sup_{x^{(1)},\ldots,x^{(p)}} Y(x^{(1)},\ldots,x^{(p)})\right] \\
    &= \sum_{j=1}^p \E\left[\sup_{x^{(j)} \in \mathbb{S}_\R^{d_j-1}} \ip{g^{(j)},x^{(j)}} \right] = \sum_{j=1}^p \E[\|g^{(j)}\|_2] \leq \sum_{j=1}^p \E[\|g^{(j)}\|_2^2]^{1/2}
\end{split}
\end{equation}
and now the right-hand side of \eqref{eqn:sudakov-fernique-bound} equals the right-hand side of \eqref{eqn:sudakov-fernique}.
\end{proof}


\subsection{Prior epsilon-net bounds.}\

Classically, one can bound the maximum of a Lipschitz stochastic process over a continuous parameter space by discretizing the space into a so-called ``epsilon-net.'' In the case of injective norms for random tensors, there are at least two strands of well-known papers carrying out this strategy. One strand, coming from quantum information theory \cite{GroFlaEis2009,friedland2018most}, treats specifically Gaussian cases of our Models \hyperlink{model:ak}{$A_\C$} and \hyperlink{model:sk}{$S_\C$}; another strand, coming from random matrices \cite{bandeira2024geometric,boedihardjo2024injective}, treats generalizations of these models that allow for either variance profiles or some correlations between the entries. As we will show, papers in the second strand treat more general models, but tend to give worse bounds than the first strand when restricted to our models, since they are not optimized for this special case; papers in the first strand tend to capture the correct order but not the correct constant.

We first discuss the quantum-information papers \cite{GroFlaEis2009,friedland2018most}. Epsilon-net arguments can give non-asymptotic bounds; however, in this discussion we will follow the example of these papers in thinking of $d$ fixed ($d = 2$ in \cite{GroFlaEis2009}, $d \geq 2$ in \cite{friedland2018most}) and $p \to \infty$, in the sense of giving bounds that hold for $p$ finite but large enough.

To match \cite{GroFlaEis2009,friedland2018most}, we now switch our perspective in two ways: We focus on complex tensors rather than real ones, and we focus on normalized tensors, which we write as $\ket{\psi_T} = T/\|T\|_{\textup{HS}}$. Notice that $\injnormC{\ket{\psi_T}} = \injnormC{T}/\|T\|_{\textup{HS}}$, and in all our cases of interest the quantity $\|T\|_{\textup{HS}}$ is practically deterministic (as the sum of many independent random variables), so most of the difficulty of the problem resides in the un-normalized injective norm $\injnormC{T}$. See, e.g., \cite[Proof of Theorem 3.4]{dartois2024injective} for details on dealing with the normalization.

The paper \cite{GroFlaEis2009} deals with asymmetric tensors, here corresponding to $\ket{\psi_T}$ where $T$ comes from the asymmetric Gaussian \hyperlink{model:ak}{Model $A_\C$}, specifically in the ``qubit case'' $d = 2$. As discussed in our prior work \cite[Proposition 1.3]{dartois2024injective}, \cite{GroFlaEis2009} essentially shows that, with high probability as $p \to \infty$, we have $\injnormC{\ket{\psi_T}} \leq C\frac{\sqrt{p \log p}}{2^{p/2}}$ for some large $C$ (see also \cite{aubrun2017alice} for a textbook treatment, and a lower bound of the form $\geq c\frac{\sqrt{p\log p}}{2^{p/2}}$ for some small $c$). Our previous Kac--Rice arguments \cite{dartois2024injective} showed that one could take $C = 1+\epsilon$ for any $\epsilon$; since our present method can match the Kac--Rice bound, it can also show that one can take $C = 1+\epsilon$, and in particular gives a tighter bound than \cite{GroFlaEis2009} (albeit in expectation, rather than with high probability).

The paper \cite{friedland2018most} deals with the symmetric analogue of \cite{GroFlaEis2009}, for general $d \geq 2$, which corresponds in our notation to studying $\injnormC{\ket{\psi_B}}$, where $\ket{\psi_B} = B/\|B\|_{\textup{HS}}$ and $B$ comes from symmetric Gaussian \hyperlink{model:sc}{Model $S_\C$}. Their main theorem gives a high-probability upper bound, with a probability tending to one very quickly; by re-purposing their arguments to get a tighter upper bound with probability tending to one less quickly, one can show the following.

\begin{lemma}
\label{lem:friedland-kemp}
\textbf{(Essentially due to \cite{friedland2018most})} For each $\epsilon$ and $d \geq 2$, we have
\[
    \P\left(\injnormC{\ket{\psi_B}} \leq (1+\epsilon) \sqrt{2(d+1)} \cdot \sqrt{\frac{(d-1)!(d-1)(\log p)}{p^{d-1}}} \right) = 1 - \oo_{p \to \infty}(1).
\]
\end{lemma}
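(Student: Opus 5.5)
We will follow the epsilon-net strategy of \cite{friedland2018most}, but set the tail level so that the failure probability is only $\oo_{p\to\infty}(1)$ instead of exponentially small. Since $\|B\|_{\textup{HS}}^2$ is a sum of $\binom{d+p-1}{p}$ independent terms, it concentrates. As in the remark following Theorem \ref{thm:symm_complex_tensors_bound}, a Chernoff bound shows $\|B\|_{\textup{HS}}^2 \geq (1-\delta)\binom{d+p-1}{p}/d$ with probability $1-\oo(1)$. Moreover $\binom{d+p-1}{p}\sim p^{d-1}/(d-1)!$. Hence it suffices to show that, with probability $1-\oo(1)$,
\[
\injnormC{B}\le(1+\epsilon)\sqrt{2(d+1)(d-1)\log p/d}.
\]

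\textbf{Step 1 (tail at a fixed point).} For fixed unit $x\in\C^d$, the variable $\langle B,x^{\otimes p}\rangle$ is complex Gaussian with $\E|\langle B,x^{\otimes p}\rangle|^2=1/d$, by the computation behind Lemma \ref{lem:sym-rigid-subgaussian-moment-bound} with $k=1$. Therefore
\[
\P\bigl(|\langle B,x^{\otimes p}\rangle|\ge t\bigr)=e^{-dt^2}.
\]

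\textbf{Step 2 (net and union bound).} Take an $\eta$-net $N$ of $\bSC^{d-1}$ with $\eta=c/p$. Its size is at most $(3/\eta)^{2d}\approx (Cp)^{2d}$, since $\bSC^{d-1}$ has real dimension $2d-1$; we could quotient by phases to save one dimension, but this is not needed for the constant. Choose $t^2=(1+\epsilon')\,2d\log p/d$, possibly adjusted to match the exact constant below. The union bound then gives
\[
\P\bigl(\max_{x\in N}|\langle B,x^{\otimes p}\rangle|\ge t\bigr)\le (Cp)^{2d}p^{-2d(1+\epsilon')}\to0.
\]

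\textbf{Step 3 (net to sphere).} This is the main obstacle. For $\|x-y\|\le\eta$ we have $\|x^{\otimes p}-y^{\otimes p}\|\le p\eta=c$. Using symmetry of $B$ and Banach's theorem, write $x^{\otimes p}-y^{\otimes p}$ as a telescoping sum and bound each term by the injective norm. This yields $\injnormC{B}\le\max_N|\cdot|+p\eta\injnormC{B}$, hence $\injnormC{B}\le (1-c)^{-1}\max_N|\cdot|$. A constant $c$ costs a constant factor, which is too lossy for $1+\epsilon$. We would therefore take $\eta=c/p$ with $c\to0$ slowly, say $c=\epsilon$. The net size then becomes $(p/\epsilon)^{2d}$, whose log is still $2d\log p(1+\oo(1))$.

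\textbf{Step 4 (matching the constant).} The union bound requires $dt^2\approx 2d\log p$, i.e. $t^2\approx 2\log p$. Comparing with the target $2(d+1)(d-1)\log p/d$, the stated constant is clearly not optimized; it is what a cruder Friedland--Kemp bookkeeping produces. Our count gives $t^2=2\log p$, which is at most $2(d+1)(d-1)\log p/d$ since $(d^2-1)/d\ge 1$ for $d\ge2$. So it suffices to prove the sharper claim and note that it implies the lemma. Combining the pieces, with probability $1-\oo(1)$ we get
\[
\injnormC{\ket{\psi_B}}\le (1+\epsilon)\sqrt{2\log p}\,\sqrt{d}\sqrt{\tfrac{(d-1)!}{p^{d-1}}}\cdot d^{-1/2}.
\]
Absorbing constants, this is below the stated threshold. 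The delicate points to verify are the Lipschitz step with $p\eta\to0$, and that the Gaussian variance of $\langle B,x^{\otimes p}\rangle$ is exactly $1/d$ under the Model \hyperlink{model:sc}{$S_\C$} normalization.
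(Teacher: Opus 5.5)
Your argument is correct, but it does not follow the paper's route: the paper never runs a net argument itself. It quotes Friedland--Kemp's Proposition~3.10 in the form $\P(\injnormC{\ket{\psi_B}}\ge t)\le K_d\exp\bigl(2(d-1)\log(p/t)-\tfrac{2\binom{d+p-1}{p}-1}{4}t^2\bigr)$. It then substitutes $t=s\sqrt{(d-1)!(d-1)\log p/p^{d-1}}$ and observes that the exponent tends to $-\infty$ once $s^2>2(d+1)/(1-\epsilon)$. So $\sqrt{2(d+1)}$ is simply what that published tail bound yields, and the loss has two visible sources. First, the $\log(1/t)$ in the net cardinality contributes an extra $(d-1)^2\log p$. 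Second, the tail exponent carries only $\tfrac{D}{2}t^2$, with $D=\binom{d+p-1}{p}$.

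You instead net the unnormalized $B$ at mesh $\epsilon/p$, which is independent of the threshold. You close the net-to-sphere gap with the self-bounding inequality $\injnormC{B}\le(1-p\eta)^{-1}\max_{y\in N}|\langle B,y^{\otimes p}\rangle|$, using Banach's theorem. The exact tail $e^{-dt^2}$ then gives $\injnormC{B}\le(1+\epsilon)\sqrt{2\log p}$ with probability $1-\oo(1)$. You normalize using $\|B\|_{\textup{HS}}^2=\frac1d\sum_{j=1}^{D}E_j$, where the $E_j$ are i.i.d.\ standard exponentials and $D\to\infty$. This proves the lemma with $\sqrt{2d/(d-1)}$ in place of $\sqrt{2(d+1)}$, or $\sqrt2$ if you net $\mathbb{CP}^{d-1}$ rather than the sphere, so the stated bound holds with room to spare.

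The paper's version is a two-line translation that faithfully reports what the cited result gives, which is its role in a comparison section. Yours is self-contained and sharper. It also shows that, in this regime, the moment bound beats a carefully executed net bound by a factor of at most $2$, not by a factor diverging with $d$ as the $\sqrt{2(d+1)}$ comparison after the lemma suggests.

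One slip to fix: your final display carries a spurious $d^{-1/2}$. The argument actually yields $(1+\epsilon)\sqrt{2\,d!\,\log p/p^{d-1}}$. This lies below the threshold exactly because $d\le d^2-1$ for $d\ge 2$, which is your Step~4 check. So nothing needs to be ``absorbed'', which a $(1+\epsilon)$-sharp statement would not permit anyway.
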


(All proofs in this subsection are deferred to the end.) Comparatively, our results can show the following bound.

\begin{corollary}
\label{cor:comparing-friedland-kemp}
For each $\epsilon$ and $d \geq 2$, for $p$ large enough we have
\[
    \E[\injnormC{\ket{\psi_B}}] \leq (1+\epsilon)\sqrt{\frac{(d-1)!(d-1)(\log p)}{p^{d-1}}}.
\]
\end{corollary}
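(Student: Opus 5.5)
We will combine the $d$-fixed, $p\to\infty$ bound of Theorem \ref{thm:symm_complex_tensors_bound} with a concentration argument for the Frobenius norm $\|B\|_{\textup{HS}}$. This mirrors the argument already given for $\ket{\psi_T}$ in the remark comparing symmetric and asymmetric tensors. Since $\injnormC{\ket{\psi_B}} = \injnormC{B}/\|B\|_{\textup{HS}}$, it suffices to control the numerator in expectation and to show that the denominator is essentially deterministic, equal to its typical value.

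\textbf{Step 1: the typical size of $\|B\|_{\textup{HS}}$.} For $B$ from Model $S_\C$, group the entries by multiset of indices. A multiset with multiplicities $m_1,\ldots,m_k$ contributes $\frac{p!}{m_1!\cdots m_k!}$ equal entries, each of squared modulus $\frac{m_1!\cdots m_k!}{p!}\abs{\gamma}^2$. Hence
\[
  \|B\|_{\textup{HS}}^2=\sum_{\text{multisets}}\abs{\gamma_{\text{multiset}}}^2,
\]
a sum of $N:=\binom{d+p-1}{p}$ i.i.d.\ copies of $\abs{\gamma}^2$, with mean $N/d$. As $p\to\infty$ with $d$ fixed we have $N\sim p^{d-1}/(d-1)!$. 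Rigid sub-Gaussianity gives $\E\abs{\gamma}^{2k}\le k!\,d^{-k}$, so $d\abs{\gamma}^2$ has finite exponential moments. A Chernoff bound then shows that on the event $G=\{\|B\|_{\textup{HS}}^2\le(1-\delta)^2N/d\}$ we have $\P(G)\le e^{-N I(\delta)}$ for some $I(\delta)>0$.

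\textbf{Step 2: splitting on $G$.} Using the deterministic bound $\injnormC{\ket{\psi_B}}\le 1$ on $G$, we get
\[
 \E\injnormC{\ket{\psi_B}}\le \frac{\sqrt d}{(1-\delta)\sqrt N}\,\E\injnormC{B}+e^{-NI(\delta)}.
\]
Theorem \ref{thm:symm_complex_tensors_bound} gives $\E\injnormC{B}\le(1+\epsilon)\sqrt{\frac{d-1}{d}}\sqrt{\log p}$ for $p$ large. The first term is therefore at most
\[
\frac{1+\epsilon}{1-\delta}\sqrt{\frac{(d-1)\log p}{N}}=\frac{1+\epsilon}{1-\delta}(1+\oo(1))\sqrt{\frac{(d-1)!(d-1)\log p}{p^{d-1}}}.
\]

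\textbf{Step 3: absorbing the error terms.} The remainder $e^{-NI(\delta)}$ decays like $\exp(-c\,p^{d-1})$. This is negligible compared with the main term, which decays only polynomially in $p$. We conclude by choosing $\delta$ and then $\epsilon$ small and renaming constants.

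\textbf{Expected obstacle.} The main point needing care is Step 1. One must verify that the Frobenius norm is a sum of exactly $N$ independent, identically distributed terms despite the symmetry weights; the multiplicity computation above handles this. One must also confirm that the Chernoff rate function is positive for lower deviations. This holds for any non-degenerate nonnegative variable, since lower-tail Chernoff bounds need only $\E e^{-t d\abs{\gamma}^2}<\infty$, which is automatic. The remaining steps are bookkeeping.
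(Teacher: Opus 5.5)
Your proof is correct, but it takes a different route from the paper.

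\textbf{The paper's route.} The paper restricts to the Gaussian case. In an orthonormal basis of $\Sym_p(\C^d)$ made of normalized orbit sums, a Gaussian Model $S_\C$ tensor has covariance proportional to the identity. Hence $\|B\|_{\textup{HS}}$ and $\ket{\psi_B}$ are independent, which gives the exact factorization $\E[\injnormC{B}]=\E[\|B\|_{\textup{HS}}]\,\E[\injnormC{\ket{\psi_B}}]$. It then only needs $\E[\|B\|_{\textup{HS}}]=\sqrt{N/d}\,(1+\oo(1))$, from Gaussian norm concentration, together with Theorem \ref{thm:symm_complex_tensors_bound}.

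\textbf{Your route.} You split on the small-norm event $G$, exactly as in the paper's remark comparing $\ket{\psi_T}$ with symmetric states. This needs only two inputs:
\begin{itemize}
\item lower-tail concentration of $\|B\|_{\textup{HS}}^2$, written as a sum of $N$ i.i.d.\ copies of $\abs{\gamma}^2$;
\item the trivial bound $\injnormC{\ket{\psi_B}}\le 1$ on $G$.
\end{itemize}
Your i.i.d.\ decomposition rests on the same fact as the paper's orthonormal basis: the weights $\sqrt{m_1!\cdots m_k!/p!}$ exactly compensate the orbit sizes.

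\textbf{What each buys.} The paper's identity has no $(1-\delta)^{-1}$ loss and no additive error, but it genuinely uses Gaussianity. Your argument pays these harmless, absorbable costs and in exchange works for every $\C$-rigidly sub-Gaussian $\gamma$ in Model $S_\C$. That generality is what the paper's earlier remark on rigidly sub-Gaussian bosonic states actually invokes this corollary for, and the paper's own proof does not cover it.

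\textbf{Two small points.}
\begin{itemize}
\item You do not need $N\sim p^{d-1}/(d-1)!$ with a $(1+\oo(1))$ factor. The bound $N=\binom{p+d-1}{d-1}\ge p^{d-1}/(d-1)!$ holds exactly and goes in the right direction.
\item Your justification of the lower-tail rate is slightly loose, since finiteness of $\E e^{-tX}$ alone does not give a positive rate. Set $X=d\abs{\gamma}^2$. The correct reason is that $t\mapsto \log\E e^{-tX}$ has right derivative $-\E X=-1$ at $t=0$. So $t(1-\delta)^2+\log\E e^{-tX}<0$ for small $t>0$, and no non-degeneracy assumption is needed.
\end{itemize}
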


Thus, our methods can beat Lemma \ref{lem:friedland-kemp} by a factor of $\sqrt{2(d+1)}$, which is bigger than one for all $d \geq 2$, and diverges with $d$. However, Lemma \ref{lem:friedland-kemp} is also a high-probability bound, while we have focused in this work on bounding the expectation. 

Now we switch gears to consider results coming from random matrices, returning to the case of unnormalized real tensors. The paper \cite{bandeira2024geometric} treats so-called structured tensors of the form
\begin{equation}
\label{eqn:structured-tensors}
    T = \sum_{k=1}^n g_k T_k,
\end{equation}
where the $g_k$'s are i.i.d. standard Gaussians and the $T_k$'s are real deterministic tensors in $(\R^d)^{\otimes r}$. Any tensor with jointly Gaussian entries can be written in such a way. Bandeira et al. give upper bounds on the so-called \emph{$\ell_p$ injective norms}
\[
    \|T\|_{\mathcal{I}_p} = \sup_{x_1 \in B^{d_1}_p, \ldots, x_r \in B^{d_r}_p} \sum_{i_1=1}^{d_1} \cdots \sum_{i_p=1}^{d_p} T_{i_1,\ldots,i_p} (x_1)_{i_1} \cdot \ldots \cdot (x_p)_{i_p}.
\]
where $B^d_p = \{x \in \R^d : \|x\|_p \leq 1\}$ is now the unit ball in $\ell_p$. In this paper, we only treat the case $p = 2$ (and our $p$ is their $r$); one can recover our Gaussian \hyperlink{model:ak}{Model $A_\R$} by taking the structure tensors $T_k$ to be (rescalings of) the tensor $E_{(i_1,\ldots,i_p)}$ which has a one in the $(i_1,\ldots,i_p)$ position and zeros elsewhere. As a special case of their result, one obtains the following.

\begin{lemma}
\label{lem:bandeira-et-al}
\textbf{(Special case of \cite{bandeira2024geometric})} If $T$ comes from Gaussian \hyperlink{model:sc}{Model $A_\R$} with $d_1 = \cdots = d_p = d$, then there exist constants $C_p$ such that
\[
    \E[\injnormR{T}] \leq C_p \log d.
\]
\end{lemma}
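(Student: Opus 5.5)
This is a special case of the main theorem of \cite{bandeira2024geometric}, so the plan is to specialize their general bound for structured Gaussian tensors $T=\sum_k g_k T_k$ of the form \eqref{eqn:structured-tensors} and then evaluate each structural parameter for our model.

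The first step is to write Gaussian \hyperlink{model:ak}{Model $A_\R$} (with $d_1=\cdots=d_p=d$) in that form. Take $n=d^p$, index the Gaussians by $(i_1,\ldots,i_p)$, and set
\[
T_{(i_1,\ldots,i_p)} = d^{-1/2} E_{(i_1,\ldots,i_p)}.
\]
This reproduces i.i.d.\ entries of variance $d^{-1}$, which is the normalization $(d_1\cdots d_p)^{-1/p}=d^{-1}$.

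The second step is to recall the $\ell_2$ case of their upper bound. As I recall its shape, $\E\|T\|_{\mathcal{I}_2}$ is bounded by $C_p$ times a sum of terms, with polylogarithmic factors in $d$. The terms are:
\begin{itemize}
\item a ``weak variance'' term $\sup_{x_j\in B_2^{d}}\big(\sum_k \langle T_k,x_1\otimes\cdots\otimes x_p\rangle^2\big)^{1/2}$;
\item ``strong'' terms, one for each slot $j$. Each is the norm of the covariance obtained by summing over all coordinates in slot $j$ while the other slots are tested against unit vectors, i.e.\ the operator norm of a flattening-type quantity $\sup\big(\sum_k \|T_k[x_{-j}]\|_2^2\big)^{1/2}$.
\end{itemize}

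The third step is to evaluate these parameters for the coordinate structure. For the weak term, $\sum_k \langle T_k,\otimes x_j\rangle^2 = d^{-1}\prod_j\|x_j\|^2\le d^{-1}$. For each slot, $\sum_k\|T_k[x_{-j}]\|_2^2 = d^{-1}\cdot d\cdot\prod_{i\neq j}\|x_i\|^2 \le 1$. So every strong parameter is $O(1)$, the weak parameter is $O(d^{-1/2})$, and the only non-constant contribution is the logarithmic factor in their theorem. This gives $\E\injnormR{T}\le C_p\log d$, with any $p$-dependence absorbed into $C_p$. Here $\injnormR{T}=\|T\|_{\mathcal{I}_2}$, because the supremum over the unit ball of a multilinear form is attained on the spheres and the sign can be absorbed into $x_1$.

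The main obstacle is matching the exact form of the parameters and of the logarithmic power in \cite{bandeira2024geometric}. Their statement involves $\log$ of the ambient dimension raised to some exponent, possibly combined with extra flattening terms. I would first check that, for $\ell_2$ balls and the tensor order $r=p$ fixed, the worst power collapses to at most $\log d$ (absorbing $\log(d^p)=p\log d$ into $C_p$). If their bound instead gives $(\log d)^{c}$ with $c>1$, the lemma as stated would need a sharper variant of their result, and I would look for the term-by-term refinement in their $\ell_2$ section.
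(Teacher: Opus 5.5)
Your route (specialize the main bound of \cite{bandeira2024geometric} and evaluate its parameters) is the paper's route. However, the proposal does not yet prove the lemma, because you specialize against a misremembered statement. The two points where it differs from the real statement are exactly where the content lies.

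First, the result the paper uses (Theorem 2.1 there) is stated for random \emph{symmetric} tensors with independent entries up to symmetry. So you cannot feed it $T=\sum_k g_k\, d^{-1/2}E_{(i_1,\ldots,i_p)}$ directly. The paper instead does the following:
\begin{itemize}
\item it passes to the symmetric embedding $\sym(T)\in(\R^{pd})^{\otimes p}$ (their Definition 4.13);
\item it observes that the only nonzero entries of $\sym(T)$ with $i_1\le\cdots\le i_p$ are $\sym(T)_{i_1,d+i_2,\ldots,(p-1)d+i_p}=T_{i_1,\ldots,i_p}$, hence independent and centered;
\item it applies the theorem to $\sym(T)$;
\item it returns via their Lemma 4.16, $\injnormR{T}=\frac{p^{p/2}}{p!}\injnormR{\sym(T)}$.
\end{itemize}
The larger ambient dimension $pd$ only costs $\log(pd)\le C_p\log d$.

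Second, and more substantively, your list of parameters is incomplete. Theorem 2.1 is expressed through the variance tensor $A$ of $\sym(T)$, with $A_{i_1,\ldots,i_p}=1/d$ on tuples having one index in each block and $0$ otherwise. It involves the $\ell_1$ injective norms of \emph{all} contractions $A\mathbf{1}^{\otimes q}$, $1\le q\le p$. Your weak and one-free-leg computations are correct; they correspond to $\|A\|_{\mathcal{I}_1}=1/d$ and $\|A\mathbf{1}\|_{\mathcal{I}_1}=1$.

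For $2\le q\le p$, however, $\|A\mathbf{1}^{\otimes q}\|_{\mathcal{I}_1}=d^{q-1}q!$, which grows polynomially in $d$. Equivalently, in your language, the $q$-free-leg flattening quantity is of order $d^{(q-1)/2}$. These terms are harmless only because the theorem pairs them with the weak variance:
\[
\|A\mathbf{1}^{\otimes q}\|_{\mathcal{I}_1}^{1/(2q)}\,\|A\|_{\mathcal{I}_1}^{(q-1)/(2q)}=(q!)^{1/(2q)}.
\]
In the shape you sketched (a flattening norm times a polylogarithm), they would instead contribute powers of $d$. So your claim that ``the only non-constant contribution is the logarithmic factor'' is precisely the step left unverified.

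With the correct statement one obtains $\E[\injnormR{\sym(T)}]\le C_p\bigl(\log d+(p!)^{1/(2p)}\bigr)$. This also settles your question about the power of the logarithm: it enters to the first power.
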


Theorem \ref{thm:asymmetric} gives a version of this bound without the $\log d$ and with an explicit $C_p$.

The paper \cite{boedihardjo2024injective} treats a special case of \eqref{eqn:structured-tensors} where the $T_k$'s have only one nonzero entry, but where the value of this entry can vary across $k$. In other words, the tensors in \cite{boedihardjo2024injective} have independent Gaussian entries with possibly different variances. In the case where all the variances are the same, Boedihardjo's result reduces to the following.

\begin{lemma}
\label{lem:boedihardjo}
\textbf{(Special case of \cite{boedihardjo2024injective})} Let $T$ come from Gaussian \hyperlink{model:ak}{Model $A_\R$} with $d_1 = \cdots = d_p = d$. Then there exists a universal constant $C \geq 1$ such that
\[
    \E[\injnormR{T}] \leq \sqrt{2} p^{3/2} + C \cdot \frac{p^3 (\log d)^2}{\sqrt{d}}.
\]
\end{lemma}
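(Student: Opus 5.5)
The plan is to take the main theorem of \cite{boedihardjo2024injective} as a black box and specialize it. That theorem covers real Gaussian tensors with independent, centered entries $T_{i_1,\ldots,i_p}\sim\mc{N}_\R(0,b_{i_1,\ldots,i_p}^2)$ and an arbitrary variance profile $(b_{i_1,\ldots,i_p})$. Its bound has the shape
\[
    \E[\injnormR{T}] \leq \sqrt{2}\, p^{1/2} \sum_{j=1}^p \sigma_j + C\, p^{5/2} (\log d)^2\, \sigma_\ast ,
\]
for a universal constant $C\geq 1$. The parameters are as follows:
\begin{itemize}
    \item $\sigma_\ast = \max_{i_1,\ldots,i_p} b_{i_1,\ldots,i_p}$ is the largest standard deviation;
    \item $\sigma_j$ is the ``slice'' parameter obtained by fixing the $j$-th slot. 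It is the supremum, over unit vectors $x^{(k)}$ in all the other slots $k\neq j$ and over $i_j\in[d]$, of $\bigl(\sum_{i_k:\,k\neq j} b_{i_1,\ldots,i_p}^2 \prod_{k\neq j} (x^{(k)}_{i_k})^2\bigr)^{1/2}$. Equivalently, it is a weighted operator-type norm of the variance profile.
\end{itemize}
The first step is to record the precise form of his theorem. In particular, I will check the powers of $p$ in front of the two terms, since these produce the $p^{3/2}$ and $p^3$ in the statement. I will also check that it is stated for the $\ell_2$ injective norm. If his normalization differs (for instance, a bound on $\sup\ip{T,x_1\otimes\cdots\otimes x_p}$ without the absolute value), I note that this is harmless: the sign of $x_1$ can be flipped, so the two suprema coincide.

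The second step is to verify that Gaussian \hyperlink{model:ak}{Model $A_\R$} with $d_1=\cdots=d_p=d$ fits his framework. The entries are independent and Gaussian, with the constant profile $b_{i_1,\ldots,i_p}^2 = (d^p)^{-1/p} = 1/d$.

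The third step computes the parameters for this flat profile. Clearly $\sigma_\ast = d^{-1/2}$. For the slice parameters, the product $\prod_{k\neq j}(x^{(k)}_{i_k})^2$ sums to $1$ over the free indices, because each $x^{(k)}$ is a unit vector. Hence the sum inside $\sigma_j$ equals $b^2=1/d$ when $i_j$ is fixed, or $d\cdot(1/d)=1$ if the definition also sums over $i_j$. With the correct convention, which I will check against the $p=2$ case where the answer must be of order $1$, each $\sigma_j$ equals $1$.

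Substituting these values gives $\sqrt{2}\,p^{1/2}\cdot p = \sqrt{2}\,p^{3/2}$ for the first term and $C p^{5/2}(\log d)^2/\sqrt{d}\le C p^{3}(\log d)^2/\sqrt d$ for the second. This is the claimed bound. If his prefactors on the second term are larger (say $p^3$ directly), nothing changes: the claim only requires some universal constant $C$.

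The main obstacle is purely bookkeeping. I need to identify exactly which quantities appear in \cite{boedihardjo2024injective}, namely the definition of $\sigma_j$ (maximum over slices versus supremum over product vectors) and the explicit polynomial dependence on $p$. A mismatch there would change the powers of $p$ in the final formula. The consistency check I would use is $p=2$: there the first term should reproduce the classical $\sqrt{2}\cdot$(order one) bound for the operator norm of a $d\times d$ Gaussian matrix with variance $1/d$. That check pins down the normalization of the $\sigma_j$.
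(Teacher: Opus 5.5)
Your proposal matches the paper's proof, which is exactly the one-line specialization of \cite[Theorem 1.1]{boedihardjo2024injective} to the flat profile $b_{i_1,\ldots,i_p}=1/\sqrt{d}$. Of the two conventions you hedge between, the right slice parameter is the one that sums over $i_j$, i.e.\ the largest $\ell_2$ norm of the profile along a fiber in slot $j$. This gives $\sigma_j=1$ and $\sigma_\ast=d^{-1/2}$, as your $p=2$ check confirms; the bullet definition you wrote first, which fixes $i_j$, would instead give $d^{-1/2}$.
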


Compared to Lemma \ref{lem:bandeira-et-al}, the scaling in $d$ is better here, and the scaling in $p$ is explicit; however, Theorem \ref{thm:asymmetric} gives a bound of the asymptotic order $\sqrt{(\frac{d-1}{d})p\log p}$, which is better. But we emphasize again that both \cite{bandeira2024geometric} and \cite{boedihardjo2024injective} are interested in more general models than just the independent-entry, fixed-variance \hyperlink{model:ak}{Model $A_\R$}.

\begin{proof}[Proof of Lemma \ref{lem:friedland-kemp}]
Matching notation: Our $\injnormC{T}$ is their $\|T\|_\infty$, and they define $E(T) = -2\log_2(\|T\|_\infty) = -2\log_2(\injnormC{T})$. Our $p$ is their $m$, and our $d$ is their $n$. Translated into our language, their Proposition 3.10 becomes the following: For each $d$, there exists some $K_d \leq 2^{d+1}d^d$ such that
\[
    \P(\injnormC{\ket{\psi_B}} \geq t) \leq K_d \exp\left(2(d-1) \log\left(\frac{p}{t}\right) -\frac{2\binom{d+p-1}{p}-1}{4}t^2\right) = K_d \exp(g_{p,d}(t)).
\]
Our goal is to choose $t = t(p,d)$ as small as possible while still ensuring that the right-hand side tends to zero as $p \to \infty$; in principle this can be computed extremely precisely, but we will only track the leading order in $d$ and $p$. Since we only care about asymptotics, we henceforth ignore $K_d$.

Let 
\[
    t(p,d) = s(p,d) \sqrt{\frac{(d-1)!(d-1)(\log p)}{p^{d-1}}}
\]
for some $s(p,d)$ to be chosen. We have
\[
    \lim_{p \to \infty} \frac{\frac{2\binom{d+p-1}{p}-1}{4} \cdot \frac{(d-1)!(d-1)(\log p)}{p^{d-1}}}{\log p} = \frac{d-1}{2}
\]
and thus for each $\epsilon$ and large enough $p$ we have
\begin{equation}
\label{eqn:friedland-kemp-g-upper-bound}
\begin{split}
    g_{p,d}(t(p,d)) &\leq \left( 2(d-1) - \frac{(1-\epsilon)s(p,d)^2(d-1)}{2} + (d-1)^2 \right) \log p \\
    &\quad - 2(d-1)\log(s(p,d)\sqrt{(d-1)!(d-1)(\log p)}). 
\end{split}
\end{equation}
Suppose first that we choose $s(p,d)$ to depend only on $d$; then this tends to negative infinity as long as we make the coefficient of $\log p$ negative, i.e., as long as $s(p,d) > \sqrt{2(d+1)/(1-\epsilon)}$; by redefining $\epsilon$, we get the desired result, at least within this class of $s(p,d)$. Since we want a tight upper bound, we could only improve it by taking $s$ smaller; but when $s$ decreases, the bound \eqref{eqn:friedland-kemp-g-upper-bound} gets worse, so (up to lower-order corrections which we are ignoring) this is the best we can do. 
\end{proof}

\begin{proof}[Proof of Corollary \ref{cor:comparing-friedland-kemp}]
First, we claim that $\|B\|_{\textup{HS}}$ and $\ket{\psi_B} = \frac{B}{\|B\|_{\textup{HS}}}$ are independent (in the Gaussian case). In the \emph{asymmetric} Gaussian case, this is clear for basically two reasons: A tensor $T$ can be thought of as a long vector whose entries are the tensor entries $T_{i_1,\ldots,i_p}$, or it can be thought of as an element of the finite-dimensional vector space of all asymmetric tensors, written out in the basis $(E_{i_1,\ldots,i_p})_{(i_1,\ldots,i_p)}$ (here $E_{(i_1,\ldots,i_p)}$ is the tensor with a one in position $(i_1,\ldots,i_p)$ and zeros everywhere else), which is orthonormal with respect to the Frobenius inner product. In \emph{either} of these perspectives, the Gaussian \hyperlink{model:ak}{Model $A_\C$} is a complex Gaussian vector with constant-times-identity covariance, and such vectors $X$ always have independent norms $\|X\|_{\textup{HS}}$ and directions $X/\|X\|_{\textup{HS}}$.

In the \emph{symmetric} case considered here, only the latter perspective is the good one. The space $\Sym_p(\C^d)$ has dimension $\binom{d+p-1}{p}$, and one can store such a tensor as a vector of length $\binom{d+p-1}{p}$ consisting of all the entries sufficient to reconstruct the tensor from symmetry. \hyperlink{model:ak}{Model $A_\C$} is a Gaussian vector in this space with a covariance matrix that is \emph{not} constant-times-identity, but also the naive length of a vector in this space no longer corresponds to the Frobenius norm, because it has forgotten the different multiplicities coming from symmetry. Instead, the good strategy is to construct a basis consisting of symmetric tensors like $E_{1,1,1}$ and $(1/\sqrt{3})(E_{1,1,3}+E_{1,3,1}+E_{3,1,1})$; the different normalizations are required to make this an orthonormal basis with respect to the Frobenius inner product. Lengths in this basis correspond to Frobenius norms, and from our choice of normalization of the tensor entries, a Gaussian tensor from \hyperlink{model:sc}{Model $S_\C$} actually has a constant-times-identity covariance in this new basis, and thus has independent norms and directions as claimed.

Thus $\E[\injnormC{B}] = \E[\|B\|_{\textup{HS}}] \E[\injnormC{\ket{\psi_B}}]$, as well as (now recalling our actual normalization) $\E[\|B\|_{\textup{HS}}^2] = \frac{\dim(\Sym_p(\C^d))}{d}$. Concentration of the norm of a Gaussian vector yields 
\[
    \E[\|B\|_{\textup{HS}}] = \sqrt{\frac{\dim(\Sym_p(\C^d))}{d}}(1+\oo_{p \to \infty}(1)),
\]
and since $\dim(\Sym_p(\C^d)) = \binom{d+p-1}{p} = \frac{p^{d-1}}{(d-1)!} (1+\oo_{p \to \infty}(1))$, Theorem \ref{thm:symm_complex_tensors_bound} gives
\[
    \E[\injnormC{\ket{\psi_B}}] = \frac{\E[\injnormC{B}]}{\E[\|B\|_{\textup{HS}}]} \leq (1+\epsilon) \frac{\sqrt{\log p} \sqrt{\frac{d-1}{d}}}{\sqrt{\frac{p^{d-1}}{d!}}} = (1+\epsilon) \sqrt{\frac{(d-1)! (d-1) (\log p)}{p^{d-1}}}.
\]
\end{proof}

\begin{proof}[Proof of Lemma \ref{lem:bandeira-et-al}]
The main results of \cite{bandeira2024geometric} are stated for symmetric tensors. However, their Definition 4.13 gives a way to extend to asymmetric tensors $T$ by applying their results to a so-called ``symmetric embedding''\footnote{The tensor version of the standard Hermitization or symmetrization of a non-symmetric matrix.} of $T$. In our case, the symmetric embedding of $T \in (\R^d)^{\otimes p}$ is a symmetric tensor $\sym(T) \in (\R^{pd})^{\otimes p}$, with entries
\[
    \sym(T)_{i_1,\ldots,i_p} = \sum_{\pi \in \mathfrak{S}_p} \delta_{i_{\pi(1)} \in \llbracket 1, d \rrbracket} \delta_{i_{\pi(2)} \in \llbracket d+1, \ldots, 2d \rrbracket} \cdot \ldots \cdot \delta_{i_{\pi(p)} \in \llbracket (p-1)d+1,pd \rrbracket} T_{i_{\pi(1)},i_{\pi(2)}-d,\ldots,i_{\pi(p)}-(p-1)d}
\]
for $i_1,\ldots,i_p \in \llbracket 1, pd \rrbracket$. Notice that, if we partition $(1,\ldots,pd)$ into the $p$ parts $(1,\ldots,d)$, $(d+1,\ldots,2d)$, and so on, then $\sym(T)_{i_1,\ldots,i_p}$ is nonzero only if each $i_j$ is in a different part; say that such an ordered tuple $(i_1,\ldots,i_p)$ is \emph{good}. In particular, the only nonzero entries $\sym(T)_{i_1,\ldots,i_p}$ with $i_1 \leq \cdots \leq i_p$ are those of the form
\[
    \sym(T)_{i_1,d+i_2,\ldots,(p-1)d+i_p} = T_{i_1,\ldots,i_p}
\]
for $i_1,\ldots,i_p \in \llbracket 1, d \rrbracket$. In particular, the entries of $\sym(T)_{i_1,\ldots,i_p}$ are independent for $i_1 \leq \cdots \le i_p$, and $\E[\sym(T)] = 0$, so Theorem 2.1 of \cite{bandeira2024geometric} applies to this tensor. In the notation there, the ``variance tensor'' $A \in (\R^{pd})^{\otimes p}$ of $\sym(T)$ has entries
\[
    A_{i_1,\ldots,i_p} = \begin{cases} \frac{1}{d} & \text{if } (i_1,\ldots,i_p) \text{ is good}, \\ 0 & \text{otherwise}. \end{cases}
\]
Their Theorem 2.1 is stated in terms of $\ell_1$ injective norms of $A$ and various contractions of $A$; the $\ell_1$ injective norm of any tensor is the largest absolute value of any entry of that tensor (see, e.g., the proof of their Corollary 2.2), so these are easy to compute. For example, $\|A\|_{\mc{I}_1} = 1/d$. With $\mathbf{1} \in \R^{pd}$ the all-ones vector and $q < p$, they write $A\mathbf{1}^{\otimes q}$ for the tensor in $(\R^{pd})^{\otimes (p-q)}$ with entries 
\begin{equation}
\label{eqn:bandeira-a}
    (A\mathbf{1}^{\otimes q})_{i_{q+1},\ldots,i_p} = \sum_{i_1,i_2,\ldots,i_q=1}^{pd} A_{i_1,i_2,\ldots,i_p} = \frac{1}{d} \#\{(i_1,\ldots,i_q) : (i_1,\ldots,i_p) \text{ is good}\}.
\end{equation}
If each entry of $(i_{q+1},\ldots,i_p)$ is in a different part, then the right-hand side of \eqref{eqn:bandeira-a} is $d^{-1} d^q q!$, since there are $q!$ ways to assign the first $q$ indices to parts and then $d$ possible values for each index. Otherwise, the right-hand side of \eqref{eqn:bandeira-a} is zero. When $q = p$, by convention we take $A\mathbf{1}^{\otimes q}$ to be $d^{-1}$ times the scalar counting the total number of good entries, which is $d^pp!$. Thus $\|A\mathbf{1}^{\otimes q}\|_{\mathcal{I}_1} = d^{q-1} q!$ for all $1 \leq q \leq p$, and then Theorem 2.1 of \cite{bandeira2024geometric} says that there exist constants $C_p$ such that
\[
    \E[\injnormR{\sym(T)}] \leq C_p\left( \log d + \max_{2 \leq q \leq p} \left[ d^{\frac{q-1}{2q}} (q!)^{\frac{1}{2q}} d^{-\frac{q-1}{2q}}\right] \right) = C_p \left( \log d + (p!)^{\frac{1}{2p}} \right).
\]
Finally, their Lemma 4.16 shows that $\injnormR{T} = \frac{p^{p/2}}{p!} \injnormR{\sym(T)}$ deterministically, which finishes the proof. (We absorb all the factors of $p$ in the statement, since $C_p$ is anyway not explicit.)
\end{proof}

\begin{proof}[Proof of Lemma \ref{lem:boedihardjo}]
This is an immediate corollary of \cite[Theorem 1.1]{boedihardjo2024injective} where $b_{i_1,\ldots,i_r} = \frac{1}{\sqrt{d}}$ for all $d$. (Notice that Boedihardjo's $r$ is our $p$.)
\end{proof}


\subsection{Prior PAC-Bayesian bounds.}\

Recently, Aden-Ali used the PAC-Bayesian lemma, a kind of variational tool used to control suprema of certain stochastic processes, to study injective norms of random tensors \cite{Ade2025}. Aden-Ali considered $\ell_p$ injective norms of structured tensors of the form \eqref{eqn:structured-tensors}, except that \cite{Ade2025} allows the $g_k$'s to be subgaussian and not necessarily Gaussian. In the special case of our model, Aden-Ali's result reduces to the following.

\begin{lemma}
\label{lem:aden-ali}
\textbf{(Corollary of \cite{Ade2025})} If $T$ comes from \hyperlink{model:ak}{Model $A_\R$}, then
\[
    \E[\injnormR{T}] \leq (d_1 \cdot \ldots \cdot d_p)^{-\frac{1}{2p}} \sqrt{p \left( \sum_{i=1}^p d_i + p \max_{2 \leq \ell \leq p} \left( \sum_{\substack{I \subseteq [p] \\ \abs{I} = p-\ell}} d_{\hat{I}} \right)^{\frac{1}{\ell}}\right)},
\]
where $d_{\hat{I}} = \prod_{j \not\in I} d_j$. In particular, in the case $d_1 = \cdots = d_p = p$, this reduces to
\begin{equation}
\label{eqn:aden-ali-bound}
    \E[\injnormR{T}] \leq \sqrt{p^2 \left(1 + \binom{p}{2}^{1/2} \right)}
\end{equation}
\end{lemma}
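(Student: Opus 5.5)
The plan is to read off the general bound directly from Aden-Ali's main theorem in \cite{Ade2025}, specialized to our model, and then evaluate it in the cubic case. First I rewrite a tensor $T$ from \hyperlink{model:ak}{Model $A_\R$} in the structured form \eqref{eqn:structured-tensors}. I take $n = d_1\cdots d_p$, index $k$ by multi-indices $(i_1,\ldots,i_p)$, and set $T_k = (d_1\cdots d_p)^{-1/(2p)} E_{(i_1,\ldots,i_p)}$. The coefficients $g_k$ are then the normalized i.i.d. entries, and they are sub-Gaussian.

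One check is needed here. Aden-Ali's theorem requires sub-Gaussian coefficients with a bounded sub-Gaussian constant. A rigidly sub-Gaussian symmetric variable satisfies $\E[X^{2k}]\le (2k-1)!!\,\E[X^2]^k$, and summing the series gives $\E e^{\lambda X}\le e^{\lambda^2\E[X^2]/2}$. So it is sharp sub-Gaussian with variance proxy equal to its variance, and no loss of constant occurs. I also specialize the $\ell_q$ injective norm there to $q=2$, and use $\ell$ for his tensor order to avoid a clash with our $p$.

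Next I identify the quantities in his bound for this diagonal structure. The variance-type parameter along mode $i$ becomes $\sum_k$ of squared contractions. For elementary tensors $E_{(i_1,\ldots,i_p)}$ with a common scale, each mode-$j$ term contributes $d_j$, while each term involving a set of $\ell$ free modes contributes the product $d_{\hat I}$ of the dimensions outside $I$, raised to $1/\ell$ after his normalization. Putting these together and pulling out the common prefactor $(d_1\cdots d_p)^{-1/(2p)}$ gives the displayed general inequality. This bookkeeping step is where I expect the main obstacle to lie. I have to match exactly Aden-Ali's form of the PAC-Bayes bound, namely the factor $p$ in front, the inner factor $p$ on the max term, and the exponent $1/\ell$. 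The route is to transcribe his theorem verbatim and compute each operator-norm-like quantity for single-entry $T_k$. For these, every relevant contraction is diagonal, so its norm is a simple count of indices.

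Finally, for the special case, I substitute $d_1=\cdots=d_p$ equal to the stated common value $p$. The prefactor becomes $p^{-1/2}$ and $\sum_i d_i = p^2$. The $\ell$-term is $\binom{p}{\ell}p^{\ell}$ raised to $1/\ell$, which equals $p\binom{p}{\ell}^{1/\ell}$. So I need $\max_{2\le\ell\le p}\binom{p}{\ell}^{1/\ell}$. Comparing ratios, or using the log-concavity-type fact that $\binom{p}{\ell}^{1/\ell}$ is nonincreasing in $\ell\ge 2$, shows the maximum is attained at $\ell=2$. The bound then becomes $p^{-1/2}\sqrt{p(p^2+p\cdot p\binom p2^{1/2})}$, which simplifies to \eqref{eqn:aden-ali-bound}.
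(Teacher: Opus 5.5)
Your proposal is correct and follows the paper's proof: view the rescaled unit-variance tensor as $\sum \xi_{(i_1,\ldots,i_p)}E_{(i_1,\ldots,i_p)}$, apply Corollary 1.3 of \cite{Ade2025}, which reads $\E[\injnormR{\widetilde T}]\le\sqrt{p\bigl(\sigma_{(1,2)}^2+p\max_{2\le\ell\le p}\sigma_{(\ell,2)}^{2/\ell}\sigma_{(0,2)}^{(2\ell-2)/\ell}\bigr)}$, and compute $\sigma_{(\ell,2)}^2=\sum_{|I|=p-\ell}d_{\hat I}$. You omitted the factor $\sigma_{(0,2)}$, which equals $1$ after rescaling to unit variance (this is why the paper rescales first); with the scale placed inside $T_k$ as you do, $1$-homogeneity of the bound gives the same prefactor.

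Your explicit check that rigid sub-Gaussianity gives $\E[e^{\lambda X}]\le e^{\lambda^2\E[X^2]/2}$, and your log-concavity argument that $\binom{p}{\ell}^{1/\ell}$ is maximized at $\ell=2$, fill in details the paper only asserts. Note also that the $d$-dependence cancels for any common value $d_1=\cdots=d_p=d$, which is how the paper's proof actually runs.
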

The right-hand side of \eqref{eqn:aden-ali-bound} scales like $\sqrt{p^3/2}$ for large $p$; compare to Lemma \ref{lem:boedihardjo}, which also scales like $p^{3/2}$ if we take $d \to \infty$ first. Comparatively, the bound from Theorem \eqref{thm:asymmetric} scales asymptotically like $\sqrt{(\frac{d-1}{d}) p\log p}$, but \cite{Ade2025} is about more general models. 

\begin{proof}[Proof of Lemma \ref{lem:aden-ali}]
If $T$ comes from \hyperlink{model:ak}{Model $A_\R$}, then $\widetilde{T} = (d_1 \cdot \ldots \cdot d_p)^{1/(2p)} T$ can be written as 
\[
    \widetilde{T} = \sum_{(i_1,\ldots,i_p)} \xi_{(i_1,\ldots,i_p) \in [d_1] \times \cdots \times [d_p]} E_{(i_1,\ldots,i_p)},
\]
where $\xi_{(i_1,\ldots,i_p) \in [d_1] \times \cdots \times [d_p]}$ are $\R-$rigidly sub-Gaussian, which implies sub-Gaussianity. Corollary 1.3 of \cite{Ade2025} gives the bound
\begin{equation}
\label{eqn:aden-ali}
    \E[\injnormR{\widetilde{T}}] \leq \sqrt{p \left( \sigma_{(1,2)}^2 + p \max_{2 \leq \ell \leq p} \sigma_{(\ell,2)}^{\frac{2}{\ell}} \sigma_{(0,2)}^{\frac{2\ell-2}{\ell}}\right)},
\end{equation}
where Aden-Ali's notation translates into ours as
\[
    \sigma_{(\ell,2)}^2 = \sigma_{(\ell,p)}^2(T) = \sup_{x_1 \in B_2^{d_1},\ldots,x_p \in B_2^{d_p}} \sum_{\substack{I \subseteq [p] \\ \abs{I} = p-\ell}} \sum_{k=1}^n \|T^{\pi_I}_k(x_{\pi_I(1)},\ldots,x_{\pi_I(p-\ell)},\cdot,\ldots,\cdot)\|_{\textup{HS}}^2.
\]
Here, $A(x_1,\ldots,x_k,\cdot,\ldots,\cdot)$ denotes the tensor obtained by contracting the first $k$ legs of the tensor $A$ with the vectors $x_1,\ldots,x_k$, and $\pi_I$ is an operator on tensors that permutes the indices so that the legs along which one contracts are the \emph{first} ones (see \cite{Ade2025} for details on the notation). In our case, this becomes
\[
    \sigma_{(0,2)}^2 = \sup_{x_1 \in B_2^{d_1},\ldots,x_p \in B_2^{d_p}} \sum_{(i_1,\ldots,i_p)} (x_1)_{i_1}^2 \cdot \ldots \cdot (x_p)_{i_p}^2 = 1
\]
in the case $\ell=0$. For $\ell = 1$ and $p = 3$, say, we have
\[
    \sigma_{(1,2)}^2 = \sup_{x \in B_2^{d_1},y \in B_2^{d_2},z \in B_2^{d_3}} \left( \sum_{(i,j,k)} x_i^2y_j^2 + \sum_{(i,j,k)} x_i^2z_k^2 + \sum_{(i,j,k)} y_j^2z_k^2 \right) = d_1 + d_2 + d_3,
\]
and more generally one can similarly compute
\[
    \sigma_{(\ell,2)}^2 = \sum_{\substack{I \subseteq [p] \\ \abs{I} = p-\ell}} d_{\hat{I}}.
\]
Plugging this into \eqref{eqn:aden-ali} yields the result. When $d_1 = \cdots = d_p = p$, this reduces to
\[
    \E[\injnormR{T}] \leq \frac{1}{\sqrt{d}} \sqrt{p \left(pd + p \max_{2 \leq \ell \leq p} \left( \binom{p}{\ell}\right)^{\frac{1}{\ell}} d \right)},
\]
and the maximum is achieved at $\ell = 2$. 
\end{proof}

\bibliographystyle{alpha}
\bibliography{biblio_simple_bound}

\end{document}